\newcommand{\R}{{\mathbb R}}
\newcommand{\Z}{{\mathbb Z}}
\newcommand{\e}{\varepsilon}
\DeclareMathOperator{\dv}{div}
\DeclareMathOperator{\dist}{dist}
\DeclareMathOperator{\diam}{diam}
\newcommand{\BV}{{\mathrm{BV}}}
\newcommand{\TV}{{\mathrm{TV}}}
\renewcommand{\L}{{\mathcal{L}}}
\newcommand{\p}{g}
\newtheorem{thm}{Theorem}[section]
\newtheorem{prop}[thm]{Proposition}
\newtheorem{lem}[thm]{Lemma}
\newtheorem{cor}[thm]{Corollary}
\newtheorem{definition}[thm]{Definition}
\theoremstyle{definition}
\newtheorem{rem}[thm]{Remark}
\title{Stability of the vortex in micromagnetics and related models}
\date{}
\begin{document}

\author{Xavier Lamy\thanks{Institut de Math\'ematiques de Toulouse; UMR 5219, Universit\'e de Toulouse; CNRS, UPS IMT, F-31062 Toulouse Cedex 9, France. Email: xlamy@math.univ-toulouse.fr.}
 \qquad  Elio Marconi\thanks{Dipartimento di Matematica 'Tullio Levi Civita', Universit\`a di Padova, via Trieste 63, 35121 Padova (PD), Italy. Email: elio.marconi@unipd.it.}}

\maketitle

\abstract{
We consider  line-energy models 
 of Ginzburg-Landau type in a 
 two-dimensional 
 simply-connected bounded domain. 
Configurations of vanishing energy 
have been characterized by Jabin, Otto and Perthame: 
the domain must be a disk, and the configuration a vortex.
We prove  a quantitative version of this statement in the class of $C^{1,1}$ domains,
improving on previous results by Lorent.
In particular,
the deviation of the domain from a disk is controlled
 by a power of the energy, and that power is optimal.
The main tool is a Lagrangian representation introduced by the second author, 
which allows to decompose the energy along characteristic curves.}

\section{Introduction}

\subsection{Models}

Several models arising in a variety of physical applications (micromagnetics, smectic liquid crystals, blistering) have in common that, as a characteristics length scale $\e$ tends to 0, bounded-energy configurations converge to two-dimensional vector fields $m\colon\Omega\to\R^2$ satisfying the eikonal equation
\begin{align}\label{eq:eik}
 |m|=1\text{ a.e. in }\Omega,\qquad \nabla\cdot (\mathbf 1_\Omega m)=0\text{ in }\R^2.
\end{align}
Here $\Omega\subset\R^2$ is a smooth, simply connected bounded domain, and the divergence constraint on the trivially extended field $\mathbf 1_\Omega m$ amounts to $\nabla\cdot m=0$ in $\Omega$ and $m\cdot n_{\partial\Omega}=0$ on $\partial\Omega$, where  $n_{\partial\Omega}$ is the exterior unit normal (and the last condition makes sense whenever  $m$ admits a strong trace on $\partial\Omega$). Examples of such models include:
\begin{itemize} 
\item The Aviles-Giga functional, introduced in \cite{AG86} as a simplified model for smectic liquid crystals and proposed as a model for thin film blisters in \cite{gioort} (see the introduction of \cite{JK00} for other applications),
\begin{align}\label{eq:AG}
&E^{AG}_{\e}(m;\Omega)=\frac \e 2 \int_{\Omega}   |\nabla m|^2+ \frac 1{2\e}\int_\Omega (1-|m|^2)^2,\\
& m\colon\Omega\to \R^2,\qquad \nabla\cdot(\mathbf 1_\Omega m )=0\text{ in }\R^2.
\nonumber
\end{align}
Note that the Aviles-Giga functional is more often expressed in terms of $u$ such that $\nabla^\perp u=m$ in $\Omega$ and $ u=0$ on $\partial\Omega$, however in a simply connected domain the two formulations are equivalent.
\item A micromagnetics model studied in \cite{RS01,RS03},
\begin{align}\label{eq:RS}
&E^{RS}_\e(m;\Omega)=\frac \e 2 \int_{\Omega}   |\nabla m|^2
+\frac 1{2\e} \int_{\R^2} |H|^2,\\
& m\colon\Omega\to\mathbb S^1\subset\R^2,\qquad H\colon\R^2\to\R^2,
\nonumber
\\
& \nabla\times H=0\text{ and }\nabla\cdot (H +  1_{\Omega}m)=0\text{ in }\R^2.
\nonumber
\end{align}
\item  A more general micromagnetics model studied in \cite{ARS},
\begin{align}\label{eq:ARS}
&E^{ARS}_\e(m;\Omega)=\frac{\e}{2}\int_\Omega |\nabla m|^2 + \frac{1}{2\e}\int_{\R^2}|H|^2 +\frac{1}{2 c_\e}\int_\Omega |m_3|^2,\\
&m\colon \Omega\to\mathbb S^2\subset\R^3,\quad 0<c_\e \leq \e^{1+\delta},
\nonumber
\\
&H\colon\R^2\to\R^2,\qquad \nabla\times H=0\text{ and } \nabla\cdot \left(H +(m_1,m_2)\mathbf 1_\Omega\right)=0\text{ in }\R^2.
\nonumber
\end{align}
\end{itemize}
For all these models, sequences of bounded energy as $\e\to 0$ are precompact in $L^2(\Omega)$ \cite{ARS,ADM,dkmo01,RS01}, and limits of converging subsequences satisfy the eikonal equation \eqref{eq:eik}. (In the case of \eqref{eq:ARS} the limit satisfies $m_3=0$ so we can identify it with an $\R^2$-valued map.)

A large literature is devoted to understanding the behavior of minimizers $m_\e$ of $E^{AG}_\e(\cdot ; \Omega)$ as $\e \to 0$.
In particular it is conjectured in \cite{gioort} that the minimizers $m_\e$ converge to $m_\ast = \nabla^\perp\dist (\cdot, \partial \Omega)$ when $\Omega$ is convex (counterexamples in nonconvex domains are given in \cite[Theorem~7]{ignatmerlet12}). 
A positive answer is obtained in \cite{JOP} when $\Omega$ is a disk, and in \cite{marconi21ellipse} for some special domains including ellipses (under the additional boundary constraint $m_{\lfloor\partial\Omega}=-in_{\partial\Omega}$). 
For $E_\e^{RS}$ much more is known: that conjecture has been verified \cite{RS03}, and limits of non-minimizing sequences also have a well-understood structure \cite{marconi21micromag}.

In \cite{JOP} the authors characterize zero-energy states, that is, limits of sequences with energy \eqref{eq:AG} converging to 0 as $\e\to 0$. In addition to the eikonal equation \eqref{eq:eik}, zero-energy states satisfy the kinetic equation
\begin{align}\label{eq:kinzero}
e^{is}\cdot\nabla_x \mathbf 1_{m(x)\cdot e^{is}>0} =0\qquad \text{in }\Omega, \text{ for all }s\in\R.
\end{align}
This is also valid for zero-energy states of \eqref{eq:RS} \cite{JOP} and of \eqref{eq:ARS} \cite{ARS} (see Appendix~\ref{a:ent}). It is shown in \cite{JOP} that, if a smooth bounded simply connected domain $\Omega$ admits a  zero energy state, that is, a solution of \eqref{eq:eik} and \eqref{eq:kinzero}, then $\Omega$ must be a disk $\Omega=B_R(x_0)$, and $m$ must be a vortex $m(x)=\pm i (x-x_0)/|x-x_0|$, or equivalently $m=\pm\nabla^\perp \dist(\cdot,\partial \Omega)$. Various generalizations can be found in \cite{BP17,GMPS21,llp22,llp20,LP18}.

\subsection{Main results}

The main purpose of this work is to provide a quantitative version of the characterization of zero-energy states from \cite{JOP}: estimate how much $\Omega$ differs from a disk and $m$ from a vortex, in terms of the energy of an approximating sequence $m_\e\to m$. Previous results in this direction are proven in \cite{lor12,lor14}. Under the assumption that $\Omega$ is a $C^2$ convex domain renormalized to satisfy $\diam(\Omega)=2$, it is shown in \cite{lor14} that there exists $x_*\in\R^2$ such that
\begin{align}\label{eq:estimlorent}
&|\Omega\Delta B_1(x_*)| + \int_\Omega \left| m +i\frac{x-x_*}{|x-x_*|}\right|^2 dx  \leq C E^{AG}_\e(m;\Omega)^\delta,\\
&\text{whenever }\nabla\cdot m=0\text{ in }\Omega\quad\text{and }m\cdot \tau =-1\text{ on }\partial\Omega,
\nonumber
\end{align}
for some absolute constants $C>0$, $\delta=2^{-9}$, and $\tau=in_{\partial\Omega}$ a unit tangent to $\partial\Omega$. 
Note that the boundary condition $m\cdot \tau=-1$, commonly imposed in the study of the Aviles-Giga functional (see e.g. \cite{ADM,AG96,contidel,JK00}), is more restrictive than the condition $m\cdot n_{\partial\Omega}=0$ enforced in \eqref{eq:AG} (which is natural in micromagnetics models). 

Our goal is to obtain an estimate similar to \eqref{eq:estimlorent}, but with a sharp exponent $\delta$, in the limit $\e\to 0$. 
To present our results in a unified setting, we consider the energy functional
\begin{align}\label{eq:F}
F_\e(m;\Omega)&=\frac{\e}{2}\int_\Omega |\nabla m|^2 + \frac{1}{2\e}\int_{\R^2}|H|^2 
+\frac{1}{2\e}\int_{\Omega}(1-|m|^2)^2 +\frac{1}{2 \e}\int_\Omega |m_3|^4,\\
&m\colon \Omega\to \R^3,\quad H\colon\R^2\to\R^2,
\nonumber
\\
&\nabla\times H=0\text{ and } \nabla\cdot \left(H +(m_1,m_2)\mathbf 1_\Omega\right)=0\text{ in }\R^2.
\nonumber
\end{align}
This functional satisfies $F_\e \leq E_\e^{AG},E^{RS}_\e, E_\e^{ARS}$ (for $F_\e\leq E_\e^{ARS}$, note that any $m\in\mathbb S^2$ satisfies $|m_3|^4\leq |m_3|^2$) and the compactness proof of \cite{dkmo01} applies (see Appendix~\ref{a:ent}) to show that any sequence $(m_\e)$ with bounded energy $F_\e(m_\e;\Omega)\leq C$ is precompact in $L^2(\Omega)$, and its limits $m=\lim m_\e$ are $\R^2$-valued and satisfy the eikonal equation \eqref{eq:eik}.
(Similar functionals are considered in \cite{IN21}.)
We obtain a sharp bound for the $L^2$-distance  between the unit normal to $\partial\Omega$ and the unit normal to a disk, in terms of the limit of $F_\e(m_\e;\Omega)$.

\begin{thm}\label{t:main}
Let $\Omega\subset\R^2$ be a simply connected   open set of class $C^{1,1}$ with $\mathcal H^1(\partial\Omega)=2\pi$ and 
$\sup_{\partial \Omega} |\kappa| \le K$ for some $K>0$, where $\kappa$ denotes the curvature of $\partial \Omega$. There exists $c>0$ depending only on $K$ such that
\begin{align}\label{eq:main}
\inf_{x_*\in\R^2}\int_{\partial\Omega}\left| n_{\partial\Omega}(x)-\frac{x-x_*}{|x-x_*|}\right|^2 d \mathcal H^1(x)\leq c \, \liminf_{\e\to 0} \;\inf_{H^1(\Omega;\R^3)} F_\e(\cdot;\Omega),
\end{align}
where $F_\e$ is the functional defined in \eqref{eq:F}.
\end{thm}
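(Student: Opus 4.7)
The plan is to pass to the $L^2$-limit of an almost minimizing sequence for $F_\e$ and then to apply the Lagrangian representation of the second author in order to quantify the Jabin--Otto--Perthame focusing phenomenon. Take $m_\e\in H^1(\Omega;\R^3)$ with $F_\e(m_\e;\Omega)\to\ell:=\liminf_{\e\to 0}\inf F_\e$. By the compactness arguments of Appendix~\ref{a:ent}, up to a subsequence $m_\e\to m$ in $L^2(\Omega)$, where $m$ is an $\R^2$-valued eikonal solution satisfying \eqref{eq:eik}. Entropy constructions as in \cite{dkmo01,JOP,ARS} yield kinetic defects $\mu_s:=e^{is}\cdot\nabla_x\mathbf{1}_{\{m\cdot e^{is}>0\}}$ that are finite Radon measures in $\Omega$ with $\int_0^{2\pi}|\mu_s|(\Omega)\,ds\leq C\ell$. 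It then suffices to bound the left-hand side of \eqref{eq:main} by a multiple of this total entropy production, for any eikonal solution $m$ obeying $m\cdot n_{\partial\Omega}=0$.

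Next I would apply the Lagrangian representation of \cite{marconi21micromag} to $m$: there is a positive measure $\eta$ on maximal characteristic segments along which $m$ is transported and the entropy production $\{\mu_s\}$ decomposes. Since $m\cdot n_{\partial\Omega}=0$, at $\H^1$-a.e. boundary point $x$ the field $m(x)$ is tangent to $\partial\Omega$, so the maximal characteristic $\gamma_x\subset\overline\Omega$ through $x$ is a chord of $\Omega$ in direction $\pm n_{\partial\Omega}(x)$. When $\mu_s\equiv 0$ the theorem of \cite{JOP} forces all chords $\gamma_x$ to pass through a common center $x_*$; the core of the proof is a quantitative version of this focusing property. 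I would consider pairs $(x_1,x_2)\in\partial\Omega\times\partial\Omega$ and their intersection $p(x_1,x_2):=\gamma_{x_1}\cap\gamma_{x_2}$; comparing $p(x_1,x_2)$ with $p(x_3,x_4)$ amounts to estimating the rotation of $m$ around the curvilinear polygon enclosed by those four chords and the intervening boundary arcs, which through the Lagrangian decomposition is controlled by the entropy production carried by characteristics crossing the polygon. A suitable barycentric choice of $x_*$ (an $\eta$-weighted average of the intersection points) would then yield
$$\int_{\partial\Omega}\dist(x_*,\gamma_x)^2\,d\H^1(x)\ \leq\ C\ell.$$

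Finally, since $\dist(x_*,\gamma_x)=|(x-x_*)\cdot i n_{\partial\Omega}(x)|$, and the perimeter normalization $\H^1(\partial\Omega)=2\pi$ together with $\sup|\kappa|\leq K$ forces $|x-x_*|\gtrsim_K 1$ on a set of almost full $\H^1$-measure, the elementary identity $|n_{\partial\Omega}(x)-(x-x_*)/|x-x_*||^2=2(1-\cos\theta)\leq C\sin^2\theta$ (with $\theta$ the angle between $-n_{\partial\Omega}(x)$ and $x_*-x$) integrates to $|n_{\partial\Omega}(x)-(x-x_*)/|x-x_*||^2\leq C_K \dist(x_*,\gamma_x)^2/|x-x_*|^2$, completing the proof. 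The main obstacle is the sharp \emph{linear} dependence on $\ell$ in the focusing estimate: \cite{lor14} achieved only exponent $2^{-9}$ by purely rigid geometric methods, so a genuinely Lagrangian argument without any exponent loss is essential. Subsidiary technical points include handling boundary points whose chord $\gamma_x$ is too short (treated separately using the curvature bound), ensuring $x_*$ can be chosen well inside $\Omega$ so the denominators do not degenerate, and checking that the family of intersection points has an integrable selection measurable in $(x_1,x_2)$.
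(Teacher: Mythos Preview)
Your outline has the right shape at the top level (pass to the limit, invoke the kinetic formulation, use the Lagrangian representation to quantify the focusing of normals), but the core step is not actually carried out, and two of the ingredients you lean on are not what you think they are.

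First, the object $\gamma_x$ you call ``the maximal characteristic through $x$'' and treat as a chord in the direction $\pm n_{\partial\Omega}(x)$ does not exist in the Lagrangian representation you cite. In that representation one has, for every $s\in\R/2\pi\Z$, a family of curves $\gamma=(\gamma_x,\gamma_s)$ with $\dot\gamma_x=e^{i\gamma_s}$, and $\gamma_s$ is only $BV$; the curves are straight precisely where there is no dissipation. There is no distinguished ``normal chord'' attached to a boundary point. The paper works instead with a geometric quantity $a(x_1,x_2,x_3)$ defined on \emph{triples} of boundary points, encoding the failure of the three normal lines to be concurrent. The reason triples are essential is the Jabin--Otto--Perthame obstruction: three directions not contained in a common half-circle cannot all have positive scalar product with a single unit vector. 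Your pairwise intersection scheme does not produce such an obstruction, and your sentence ``estimating the rotation of $m$ around the curvilinear polygon \dots\ is controlled by the entropy production'' is exactly the statement one has to prove, with no mechanism offered for the sharp linear dependence. In the paper this is Lemma~\ref{l:local_lower_bound}: one packs Lagrangian curves into cylinders $\mathcal C_k$ near the common intersection point, compares the time spent there by the actual curves versus the free (straight) ones, and extracts $a(\hat x)^5$ worth of dissipation from $I(\hat x,a(\hat x))$; a Besicovitch covering then gives $\int a^2\lesssim\nu(\Omega)$.

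Second, two structural issues are missing. You do not address the trace ambiguity: $m\cdot n_{\partial\Omega}=0$ only gives $m=\pm\tau$ on $\partial\Omega$, and the ``wrong'' sign must be shown to occupy $\mathcal H^1$-measure $\lesssim\nu(\Omega)$ (Proposition~\ref{p:trace} in the paper, itself a nontrivial Lagrangian argument). And you need an initialization step guaranteeing $\Omega$ is a priori close to a disk when $\nu(\Omega)$ is small (Lemma~\ref{l:comp}); without it your final inequality $2(1-\cos\theta)\le C\sin^2\theta$ fails for $\theta$ near $\pi$, and the denominator $|x-x_*|$ is not controlled from below.
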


Moreover, this estimate is sharp:

\begin{prop}\label{p:sharp}
There exist a family of convex domains $\lbrace\Omega_N\rbrace_{N\geq 3}$ of class $C^{1,1}$ with uniformly bounded curvature such that
\begin{align*}
\frac{c_1}{N^2} & \geq \inf_{x_*\in\R^2}\int_{\partial\Omega_N}\left| n_{\partial\Omega_N}(x)-\frac{x-x_*}{|x-x_*|}\right|^2 d \mathcal H^1(x) \\
& \geq c_2\, \liminf_{\e\to 0} \; \inf_{H^1(\Omega;\R^3)} F_\e(\cdot;\Omega_N) \geq \frac{c_3}{N^2},
\end{align*}
for some absolute constants $c_1,c_2,c_3>0$.
\end{prop}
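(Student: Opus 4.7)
My plan is to take $\Omega_N$ as the $N$-fold symmetric perturbation of the unit disk whose boundary, in polar coordinates, is given by
\[
r=R_N\bigl(1+\alpha N^{-2}\cos(N\theta)\bigr),
\]
where $\alpha\in(0,1)$ is a small fixed constant (so that $\Omega_N$ is convex and $\sup|\kappa|\leq K(\alpha)$ uniformly in $N$, since $r''\sim\alpha$ at leading order) and $R_N$ normalizes the perimeter to $2\pi$. A direct parametric computation yields $n_{\partial\Omega_N}(\theta)=\hat r(\theta)+\alpha N^{-1}\sin(N\theta)\hat\theta(\theta)+O(N^{-2})$, and by the $N$-fold symmetry (with $N\geq 3$ ruling out any profitable shift of the center) $x_*=0$ is optimal, so the infimum in \eqref{eq:main} for $\Omega=\Omega_N$ evaluates to $\alpha^2\pi N^{-2}+O(N^{-3})$. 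This establishes the first inequality of the proposition; the matching lower bound on the deviation, combined with Theorem~\ref{t:main} applied to $\Omega_N$ (whose curvature is uniformly bounded in $N$), at once yields the third inequality $c_2\liminf\inf F_\e(\cdot;\Omega_N)\geq c_3/N^2$.

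It remains to establish the middle inequality by exhibiting competitors $m_\e$ with $F_\e(m_\e;\Omega_N)\leq CN^{-2}$. I plan to use the boundary-matched vortex $m=\nabla^\perp d(\cdot,\partial\Omega_N)$, which is unit-length a.e., divergence-free as a distribution in $\Omega_N$ (the jump of $m$ across the cut locus is parallel to the cut locus, so contributes no surface divergence), and tangent to $\partial\Omega_N$, so the stray field $H$ vanishes identically. The $N$-fold symmetry and the smallness of the perturbation force the cut locus of $d(\cdot,\partial\Omega_N)$ to be an $N$-pointed star: $N$ radial segments, one pointing to each outward bump and meeting at the origin, each of length bounded uniformly in $N$ (an elementary computation gives length $\approx 4\alpha/\pi^2$). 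Across each segment the two equidistant nearest boundary points lie at angular distance $\pi/N$, so the jump has size $|m^+-m^-|\sim 2\pi/N$. Regularizing by the standard Aviles-Giga optimal one-dimensional transition and using $F_\e\leq E_\e^{AG}$, each regularized jump contributes at most the Jin-Kohn surface tension $\tfrac16|m^+-m^-|^3\sim N^{-3}$ per unit length, while the remaining terms of $F_\e$ are $o_\e(1)$; summing over $N$ segments of bounded length gives $\liminf_\e F_\e(m_\e;\Omega_N)\leq CN\cdot N^{-3}=CN^{-2}$.

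The main technical obstacle is the precise analysis of the cut locus of $d(\cdot,\partial\Omega_N)$: verifying the expected star structure and obtaining both the sharp length asymptotics ($\sim 1$) of each branch and the sharp jump size ($\sim 1/N$). The target scaling $N^{-2}$ arises as a tight balance $N\cdot O(1)\cdot O(N^{-3})$, so any logarithmic or polynomial slack would ruin the exponent. Convexity and the explicit $N$-fold symmetry of $\Omega_N$ should make this tractable via elementary trigonometry, but some care is required at the origin where all $N$ branches meet.
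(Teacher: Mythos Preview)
Your approach is correct and follows the same overall strategy as the paper: choose an $N$-fold symmetric domain, compute the normal deviation $\sim 1/N^2$, use $m = \nabla^\perp\dist(\cdot,\partial\Omega_N)$ regularized via standard Aviles--Giga transitions to get an energy upper bound $\lesssim 1/N^2$, and invoke Theorem~\ref{t:main} for the matching lower bound on the energy.

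The difference is only in the choice of domain. The paper takes $\Omega_N$ to be the convex hull of the $N$ disks $D_{1/2}(e^{2ik\pi/N}/2)$, $k=0,\ldots,N-1$ (a regular $N$-gon with rounded corners), rescaled to have perimeter $2\pi$. For that choice the cut locus of $\dist(\cdot,\partial\Omega_N)$ is immediately seen to consist of $N$ straight segments from the origin to the rounding-disk centers, each of length $\sim 1/2$, and the jump of $m$ across each prong is \emph{constant}, equal to $2\sin(\pi/N)$, since the two adjacent boundary pieces are flat. This makes the ``main technical obstacle'' you identify entirely explicit and trivial. Your smooth perturbation $r=R_N(1+\alpha N^{-2}\cos N\theta)$ works too, but the cut-locus analysis is genuinely more delicate: the prongs have length $\approx \alpha/(1+\alpha)$ (your value $4\alpha/\pi^2$ looks slightly off, though the order is what matters), and the jump varies along each prong from $0$ at the tip to $\sim 2\sin(\pi/N)$ at the center, so you must integrate a cubic profile rather than just multiply. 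The paper's piecewise-explicit domain buys a one-line jump-set computation at the cost of being only $C^{1,1}$; your domain is $C^\infty$ but obliges you to actually analyse the medial axis of a nearly circular region.
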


\begin{rem}\label{r:sharp}
The estimate \eqref{eq:main} is sharp also when replacing $\inf F_\e$ with any of the (larger) $\inf E_\e^{AG}$, $\inf E^{RS}_\e$ or $\inf E_\e^{ARS}$, where the infimums are taken over all admissible maps for the corresponding functionals as described in \eqref{eq:AG}, \eqref{eq:RS} and \eqref{eq:ARS}. 
This will be clear from the explicit description of the $\Omega_N$'s in \S~\ref{s:sharp}.
\end{rem}

As corollaries of Theorem~\ref{t:main} and its proof we obtain two other estimates, which are however probably not sharp. The first corollary provides a bound on the distance of the boundary $\partial\Omega$ to the boundary of a disk, which is perhaps a more natural way of measuring how close $\Omega$ is to a disk.

\begin{cor}\label{c:estOmega}
Let $\Omega$ be as in Theorem~\ref{t:main}. Then 
\begin{align*}
\inf_{x_*\in\R^2}\dist(\partial\Omega,\partial B_1(x_*))\leq c \, \liminf_{\e\to 0}\; \inf F_\e(\cdot;\Omega)^{\frac 12}.
\end{align*}
for some constant $c=c(K)>0$.
\end{cor}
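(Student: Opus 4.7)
The plan is to parametrize $\partial\Omega$ by arc length and upgrade the $L^2$-closeness of normals provided by Theorem~\ref{t:main} to an $L^\infty$-closeness of the radial distances $|x-x_*|$. I would fix a (near-)minimizer $x_*\in\R^2$ of the left-hand side of \eqref{eq:main}, set $\nu(x):=(x-x_*)/|x-x_*|$, parametrize $\partial\Omega$ by arc length as $\gamma\colon[0,2\pi]\to\R^2$, and write $n_{\partial\Omega}=e^{i\phi(s)}$ and $\gamma(s)-x_*=r(s)e^{i\psi(s)}$. Differentiating $\gamma=x_*+re^{i\psi}$ and comparing with $\gamma'=ie^{i\phi}$ yields the two polar identities
\[
r'(s)=-\sin(\phi(s)-\psi(s)),\qquad r(s)\,\psi'(s)=\cos(\phi(s)-\psi(s)),
\]
and in particular the pointwise bound $(r')^2\leq |n_{\partial\Omega}-\nu|^2=4\sin^2\tfrac{\phi-\psi}{2}$.

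Writing $E:=\liminf_{\e\to 0}\inf F_\e$, Theorem~\ref{t:main} combined with Cauchy--Schwarz then gives the oscillation bound
\[
\max_s r-\min_s r \;\leq\; \|r'\|_{L^1([0,2\pi])}\;\leq\;\sqrt{2\pi}\,\|r'\|_{L^2}\;\leq\; c\sqrt{E}.
\]
Next, integrating the second polar identity and using $\cos=1-\tfrac12|n_{\partial\Omega}-\nu|^2$ produces
\[
\int_0^{2\pi} r\psi'\,ds\;=\;2\pi-\tfrac12\int_0^{2\pi}|n_{\partial\Omega}-\nu|^2\,ds\;=\;2\pi-O(E).
\]
Provided $x_*\in\Omega$, the winding number of $\gamma-x_*$ around $0$ is one, so $\int_0^{2\pi}\psi'\,ds=2\pi$; picking any reference value $r_0=r(s_0)$ and writing $\int(r-r_0)\psi'\,ds=2\pi(1-r_0)-O(E)$, the oscillation bound together with a uniform bound on $\int|\psi'|\,ds$ forces $|r_0-1|\leq c\sqrt{E}$. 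Combined with $\osc r\leq c\sqrt E$ this yields the pointwise estimate $\big||x-x_*|-1\big|\leq c\sqrt{E}$ for every $x\in\partial\Omega$.

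To obtain the Hausdorff distance estimate I would also observe that since $\partial\Omega$ winds once around $x_*$, every ray emanating from $x_*$ meets $\partial\Omega$; the radius bound then places the hitting point within $c\sqrt E$ of the corresponding point of $\partial B_1(x_*)$, yielding $\dist(\partial\Omega,\partial B_1(x_*))\leq c\sqrt{E}$ in both Hausdorff directions.

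The main technical obstacle is justifying the two qualitative inputs used above: that $x_*\in\Omega$ and that $r$ is bounded away from $0$. For $E$ small, the former follows from a winding-number comparison: if $x_*\notin\overline\Omega$, then $\psi$ has zero total variation along $\partial\Omega$ while $\phi$ has total variation $2\pi$ by the Umlaufsatz, which in turn forces a universal lower bound on $\int|n_{\partial\Omega}-\nu|^2\,ds$, contradicting Theorem~\ref{t:main} once $E$ is small. The lower bound on $r$ will use the curvature bound $|\kappa|\leq K$ together with $\mathcal H^1(\partial\Omega)=2\pi$ to produce a $K$-dependent inscribed disk in $\Omega$, around whose centre $x_*$ can if necessary be relocated at a cost absorbed into the constant $c(K)$. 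Finally, the complementary regime $E\geq E_0(K)$ is handled by the trivial bound $\dist(\partial\Omega,\partial B_1(x_*))\leq C(K)$ coming from the diameter bound $\diam\Omega\leq\pi$.
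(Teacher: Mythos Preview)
Your argument is essentially the paper's own route (Lemma~\ref{l:aux} combined with Cauchy--Schwarz and Theorem~\ref{t:main}): parametrize in polar coordinates around $x_*$, bound the oscillation of $r$ by $\|r'\|_{L^1}\le \sqrt{2\pi}\,\|n_{\partial\Omega}-\nu\|_{L^2}$, and use a winding/perimeter identity to pin the mean of $r$ near $1$. The one real divergence is in how you secure the qualitative inputs $x_*\in\Omega$ and $\inf r\ge c(K)>0$.

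Your winding argument for $x_*\in\Omega$ is fine (though ``$\psi$ has zero total variation'' should read ``zero net variation''), but the relocation idea for the lower bound on $r$ does not work as stated: moving $x_*$ to the centre $x_0$ of an inscribed disk changes $\int_{\partial\Omega}|n_{\partial\Omega}-\nu_{x_*}|^2$ by a term of order $|x_*-x_0|^2$, which you cannot absorb into $c(K)$ without already knowing $|x_*-x_0|$ is small. The paper sidesteps this via a soft compactness argument (Lemma~\ref{l:comp}): for small dissipation, $\Omega$ is $C^1$-close to a unit disk and strictly star-shaped around the centre $x_0$ of its maximal inscribed disk, and Lemma~\ref{l:aux} is then applied directly with that centre. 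If you want to stay quantitative, replace relocation by a coercivity estimate: show that $\dist(x_*,\partial\Omega)\le c_1(K)$ forces $\int_{\partial\Omega}|n_{\partial\Omega}-\nu_{x_*}|^2\ge c_2(K)$ (a local computation near the nearest boundary point, using only $|\kappa|\le K$), so that the near-minimizer $x_*$ from Theorem~\ref{t:main} automatically sits at distance $\ge c_1(K)$ from $\partial\Omega$ once $E$ is below a $K$-dependent threshold.
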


The second corollary provides a bound on the distance of a limiting map $m$ from a vortex.

\begin{cor}\label{c:estm}
Let $\Omega$ be as in Theorem~\ref{t:main} and $m =\lim m_\e$ as $\e\to 0$, where $(m_\e)$ is a sequence of admissible maps for the functional $F_\e$. Then there exists $\alpha\in\lbrace \pm 1\rbrace$ and $x_*\in\R^2$ such that
\begin{align*}
\int_\Omega \left| m(x)-\alpha \, i \frac{x-x_*}{|x-x_*|}\right|^4\, dx
\leq c\, 
\, \liminf_{\e\to 0}  F_\e(m_\e;\Omega)^{\frac 23},
\end{align*}
for some constant $c=c(K)>0$.
\end{cor}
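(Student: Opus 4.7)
Fix a subsequence with $m_\e\to m$ in $L^2(\Omega;\R^3)$ and $F_\e(m_\e;\Omega)\to F_0:=\liminf_{\e\to 0}F_\e(m_\e;\Omega)$. The limit $m$ is $\R^2$-valued, satisfies \eqref{eq:eik}--\eqref{eq:kinzero} and takes the tangential trace $m=\beta\,i\,n_{\partial\Omega}$ on $\partial\Omega$ with $\beta\in\{\pm1\}$. The plan is to combine the sharp boundary bound of Theorem~\ref{t:main} with the Lagrangian decomposition of $m$ used in its proof, propagating the boundary control into the interior.

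Applying Theorem~\ref{t:main} to the sequence $(m_\e)$ I pick $x_*\in\R^2$ with $\int_{\partial\Omega}|n_{\partial\Omega}-(x-x_*)/|x-x_*||^2\,d\H^1\le c F_0$. I then argue that $\beta$ is a single constant $\alpha\in\{\pm1\}$: a jump of $\beta$ along the connected curve $\partial\Omega$ would correspond to a size-$2$ jump of the tangential trace of $m$, costing a uniformly positive amount of $F_\e$-energy by the standard trace-jump lower bound, so it is ruled out for $F_0$ small (the corollary being trivial otherwise, up to the choice of $c$). Setting $m_*(x):=\alpha\,i\,(x-x_*)/|x-x_*|$, the pointwise identity $|m-m_*|=|n_{\partial\Omega}-(x-x_*)/|x-x_*||$ on $\partial\Omega$ upgrades Theorem~\ref{t:main} directly to the trace estimate $\int_{\partial\Omega}|m-m_*|^2\,d\H^1\le c F_0$.

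The core step is to propagate this bound into $\Omega$. Here I would invoke the Lagrangian representation of $m$ constructed in the proof of Theorem~\ref{t:main}: a measure $\sigma$ on the space of oriented segments $L\subset\overline\Omega$, each with both endpoints on $\partial\Omega$ and a constant value $v_L\in\mathbb S^1$ of $m$ along it, satisfying the disintegration $\int_\Omega f\,dx=\int\bigl(\int_L f\,d\H^1\bigr)d\sigma(L)$ for $f\ge 0$. Along each $L$ the vortex $m_*\circ L$ rotates at instantaneous rate $d_L/|\cdot-x_*|^2$, where $d_L:=\dist(x_*,L)$, and combined with the boundary trace bound at the endpoints of $L$ this yields a pointwise estimate of $|v_L-m_*|$ on $L$ in terms of $d_L$ and the endpoint defect. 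Integrating the fourth power first along $L$ (a direct computation gives $\int_L d_L^4/|L(s)-x_*|^4\,ds\sim d_L$) and then against $\sigma$, using the quantitative $\sigma$-moment control on $d_L$ extracted from the proof of Theorem~\ref{t:main} together with a H\"older interpolation against the universal bound $|m-m_*|\le 2$, should produce the announced estimate with exponent $F_0^{2/3}$.

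The main obstacle is precisely this last integration: one must distill from the proof of Theorem~\ref{t:main} an appropriate moment bound for $d_L$ under $\sigma$ and combine it with the boundary defect in a way compatible with the singular rotation of $m_*$ at $x_*$. The expected non-sharpness of the exponent $2/3$ reflects this interpolation loss, since a segment $L$ with small $d_L>0$ crossing the perpendicular foot from $x_*$ forces $m_*$ to rotate by almost $\pi$ along it while $v_L$ stays constant, a rigidity that the $L^2$ trace bound alone does not capture optimally.
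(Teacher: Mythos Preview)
Your proposal has a genuine gap rooted in a mischaracterization of the Lagrangian representation. You describe it as a measure on \emph{straight segments} $L$ along which $m$ takes a \emph{constant} value $v_L$, with the disintegration $\int_\Omega f\,dx=\int(\int_L f\,d\mathcal H^1)\,d\sigma(L)$. None of this is correct when $\nu(\Omega)>0$: the Lagrangian curves $\gamma_x$ are merely Lipschitz with $\dot\gamma_x=e^{i\gamma_s}$ and $\gamma_s\in BV$, so they bend precisely by the amount of dissipation $|D_t\gamma_s|$; the value of $m$ along them is not constant; and the representation formula \eqref{E_repr_formula} pushes forward to $\mathbf 1_{E_m}\,\mathcal L^2\times\mathcal L^1$ on $\Omega\times\R/2\pi\Z$, not to Lebesgue measure on $\Omega$. (You also write that $m$ satisfies \eqref{eq:kinzero}; it satisfies \eqref{eq:kin}.) Even if one patched the description, the crucial ``$\sigma$-moment control on $d_L$'' you invoke is nowhere established in the proof of Theorem~\ref{t:main}, which bounds the purely geometric quantity $a(\hat x)$ on $\partial\Omega^3$ and says nothing about how close individual Lagrangian curves pass to $x_*$. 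You yourself flag this as ``the main obstacle'' and leave it open; as stated, the argument does not go through.

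The paper's proof is entirely different and avoids the Lagrangian representation altogether. It uses a div--curl argument based on the Jin--Kohn entropies $\Sigma_1,\Sigma_2$: the pointwise inequality $\det(\Sigma(m_1)-\Sigma(m_2))\gtrsim |m_1-m_2|^4$ combined with a div--curl estimate gives
\[
\|m-m_*\|_{L^4(\Omega)}^3 \lesssim \|\nabla\cdot\Sigma(m)-\nabla\cdot\Sigma(m_*)\|_{\mathcal M(\Omega)} + \|m-m_*\|_{L^1(\partial\Omega)}.
\]
Since the vortex $m_*$ is a zero-energy state, $\nabla\cdot\Sigma(m_*)=0$, and $\|\nabla\cdot\Sigma(m)\|_{\mathcal M}\lesssim\nu(\Omega)$ because $\Sigma_1,\Sigma_2$ are entropies. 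The boundary term is $\lesssim\nu(\Omega)^{1/2}$ by Theorem~\ref{t:main2} (with the sign $\alpha$ fixed via Proposition~\ref{p:trace}, not by your trace-jump argument). This yields $\|m-m_*\|_{L^4}^4\lesssim\nu(\Omega)^{2/3}$ in two lines, with no need to propagate anything along characteristics.
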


\begin{rem}\label{r:lorent}
In comparison with the estimate \eqref{eq:estimlorent} for $E_\e^{AG}$ from \cite{lor14}, we don't require $\Omega$ to be convex, and impose only the boundary condition $m\cdot n_{\partial\Omega}=0$ on limit maps. 
However, we only obtain bounds in the limit $\e\to 0$, while \eqref{eq:estimlorent} is valid for any fixed $\e>0$. 
Note that the constant $c$ in \eqref{eq:main} depends on $K$, while the constant $C$ in \eqref{eq:estimlorent} is absolute; 
on the other hand it is not possible to obtain an absolute constant if we drop the assumption of $\Omega$ being convex.
Indeed if $\Omega_{\delta}=B_1((-1+\delta,0)) \cup B_1((1-\delta,0))$ 
(or rather, a mollification of this domain at scale much smaller than $\delta$),
then  $\liminf_{\e\to 0} \inf F_\e(\cdot, \Omega_{\delta})$ tends to 0 as $\delta\to 0$.
This can be checked by using the solution of \eqref{eq:eik} in $\Omega_\delta$ given by $m_\delta=i\nabla d_\delta$, where $d_\delta(x)=\dist(x,\partial\Omega_\delta)$, and the upper bound (see e.g. \cite{contidel,poliakovsky07}) $\liminf_{\e\to 0} \inf F_\e(\cdot,\Omega_\delta)\leq C \int_{J_\delta} |[m_\delta]|^3 d\mathcal H^1$, where $J_\delta$ is the jump set of $m_\delta$.
\end{rem}

Our proofs of Theorem~\ref{t:main} and its corollaries rely on a generalization of the zero-energy kinetic equation \eqref{eq:kinzero}   to  limits $m=\lim_{\e\to 0} m_\e$ of bounded energy sequences:
\begin{align}\label{eq:kin}
&e^{is}\cdot\nabla_x \mathbf 1_{m(x)\cdot e^{is}>0} =\partial_s\sigma,\qquad \sigma\in\mathcal M(\Omega\times \R/2\pi\Z),\\
& |\sigma|(\Omega\times\R/2\pi\Z) \leq c_0 \liminf_{\e\to 0} F_\e(m_\e;\Omega),\nonumber
\end{align}
where $c_0>0$ is an absolute constant.
This kinetic formulation, inspired by the field of scalar conservation laws \cite{lionsperthametadmor94}, was first obtained in  \cite{jabinperthame01} for the Aviles-Giga functional \eqref{eq:AG} (see also \cite{GL}) and in \cite{RS03} for the micromagnetics model \eqref{eq:RS}. It also applies to the more general functional $F_\e$ (see Appendix~\ref{a:ent}). It is worth noting that it implies that $m$ admits strong traces along $1$-rectifiable subsets (see \cite{vasseur01} or \cite{ODL03}), and in particular along $\partial\Omega$.

Among the measures $\sigma$ satisfying  \eqref{eq:kin}, 
we consider the measure $\sigma_{\mathrm{min}}$ with minimal total variation
$|\sigma|(\Omega \times \R/2\pi \Z)$ (the uniqueness of $\sigma_{\min}$ is proven in \cite{marconi21ellipse}), and set
\begin{equation}\label{e:def_nu}
\nu= (p_x)_\sharp |\sigma_{\min}|,
\end{equation}
where $p_x:\Omega\times \R/2\pi\Z \to \Omega$ denotes the standard projection. In particular we have 
\begin{align}\label{eq:nulowF}
\nu(\Omega)=|\sigma_{\min}|(\Omega\times\R/2\pi\Z)
\leq c_0 \liminf_{\e\to 0} F_\e(m_\e;\Omega).
\end{align}
With these notations we may reformulate our main estimate  as follows.

\begin{thm}\label{t:main2}
Let $\Omega\subset\R^2$ be a simply connected  open set of class $C^{1,1}$ with $\mathcal H^1(\partial\Omega)=2\pi$ and 
$\sup_{\partial \Omega} |\kappa| \le K$ for some $K>0$, where $\kappa$ denotes the curvature of $\partial \Omega$.
If there exists $m\colon\Omega\to\R^2$ solving the eikonal equation \eqref{eq:eik} and the kinetic formulation \eqref{eq:kin}, then
\begin{align}\label{eq:main2}
\inf_{x_*\in\R^2}\int_{\partial\Omega}\left| n_{\partial\Omega}(x)-\frac{x-x_*}{|x-x_*|}\right|^2 d \mathcal H^1(x)\leq c \, \nu(\Omega),
\end{align}
for some constant $c>0$ depending only on $K$.
\end{thm}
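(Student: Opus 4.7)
The plan is to use the Lagrangian representation of the defect measure $\sigma_{\min}$ (from \cite{marconi21ellipse}) to decompose it along characteristic line segments and then convert the resulting tangent-line structure into geometric information about $\partial\Omega$.

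First, I would establish the boundary trace of $m$. By the kinetic formulation \eqref{eq:kin} and the strong trace result recalled just above the theorem, $m$ admits a trace on $\partial\Omega$ satisfying $m\cdot n_{\partial\Omega}=0$, hence $m=\alpha\, i n_{\partial\Omega}$ with $\alpha\colon\partial\Omega\to\{-1,+1\}$ measurable. A direct use of \eqref{eq:kin} tested against functions depending only on $s$ bounds $\mathcal H^1(\{\alpha=-1\})$ (or $\{\alpha=+1\}$, whichever is smaller) by $c\nu(\Omega)$, so after possibly replacing $m$ by $-m$ I may assume $m=-i n_{\partial\Omega}$ on $\partial\Omega$. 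Second, I would apply the Lagrangian representation: $|\sigma_{\min}|$ disintegrates as a superposition of characteristic segments $\gamma\subset\overline\Omega$, each with a fixed direction $e^{is(\gamma)}$ and endpoints $p_{\pm}(\gamma)\in\partial\Omega$. Along such a characteristic, the sublevel set $\{m\cdot e^{is(\gamma)}>0\}$ sits on one side, and the trace formula $m=-i n_{\partial\Omega}$ forces $n_{\partial\Omega}(p_+(\gamma))=e^{is(\gamma)}$ and $n_{\partial\Omega}(p_-(\gamma))=-e^{is(\gamma)}$, up to an angular error whose squared sum over characteristics is bounded by the local mass of $\sigma_{\min}$.

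Third, parameterize $\partial\Omega$ by arc length $t\in\R/2\pi\Z$ and write $n_{\partial\Omega}(x(t))=e^{i\alpha(t)}$, so $\dot\alpha=\kappa$ and $|\dot\alpha|\le K$. The map $\alpha$ is an absolutely continuous degree-one map of the circle. In the ideal (zero-defect) case, the characteristic structure from Step~2 forces two simultaneous conditions: for each $s$, the chord $x(\alpha^{-1}(s))-x(\alpha^{-1}(s+\pi))$ is parallel to $e^{is}$, and $1/\kappa(\alpha^{-1}(s))+1/\kappa(\alpha^{-1}(s+\pi))$ is independent of $s$. A short calculation (differentiating the parallelism condition in $s$) shows that these two conditions together force $\Omega$ to be a disk of radius~$1$. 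Quantitatively, the Lagrangian decomposition of Step~2 controls the $L^2$-deviation of these two scalar quantities from their disk values by $\nu(\Omega)$, yielding an estimate of the form $\int_0^{2\pi}(\alpha(t)-t-c)^2\,dt\le C\nu(\Omega)$ after an appropriate rotation. Choosing $x_*$ as (for instance) the average of the chord midpoints and using the elementary bound $|n_{\partial\Omega}(x(t))-(x(t)-x_*)/|x(t)-x_*||^2\lesssim(\alpha(t)-t-c)^2+|x_*-\bar x_*|^2$, where $\bar x_*$ is the disk center in the equality case, one recovers \eqref{eq:main2}.

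The main obstacle is the quantitative Lagrangian accounting of Step~2: translating the mass of $|\sigma_{\min}|$ concentrated near each characteristic into a sharp pointwise bound on the angular mismatches $|\alpha(t_\pm)-s\mp\pi/2|$ at its endpoints. This hinges on the precise structure of the representation from \cite{marconi21ellipse}, together with the use of $\sup_{\partial\Omega}|\kappa|\le K$ to rule out degenerate configurations (characteristics almost tangent to $\partial\Omega$, endpoints of unbounded multiplicity) and to ensure that the arc-length parameterization $t\mapsto x(t)$ and the angular parameterization $s\mapsto\alpha^{-1}(s)$ are uniformly comparable, so that bounds phrased in one variable transfer to the other with a constant depending only on $K$.
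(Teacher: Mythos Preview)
Your proposal rests on a mischaracterization of the Lagrangian representation. In the representation from \cite{marconi21ellipse} used in the paper (see Definition~\ref{D_Lagr}), the curves are pairs $\gamma=(\gamma_x,\gamma_s)$ with $\gamma_s\in BV$ and $\dot\gamma_x=e^{i\gamma_s(t)}$; they are \emph{not} straight segments with a fixed direction $e^{is(\gamma)}$. The dissipation carried by a curve is precisely $\mu_\gamma=|D_t\gamma_s|$, i.e.\ the amount the curve \emph{bends}, and Proposition~\ref{p:lagrangiandissipation} says that integrating this over all curves recovers $\nu$. There is no ``angular error at the endpoints'' controlled by the local mass of $\sigma_{\min}$; the error is distributed along the curve. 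Consequently your Step~2 conclusion that $n_{\partial\Omega}(p_\pm(\gamma))=\pm e^{is(\gamma)}$ up to a squared error summing to $\nu(\Omega)$ has no basis in the actual structure of the representation. Even in the zero-energy case your endpoint claim is not what the representation delivers: the curves fill the full three-dimensional epigraph $E_m$, not the two-dimensional level sets $\{m\cdot e^{is}=0\}$, so for a generic curve the entering/exiting boundary conditions give only the inequalities $i\tau(\gamma_x(t_\gamma^\pm))\cdot e^{i\gamma_s(t_\gamma^\pm)}\lessgtr 0$, not identification of $n_{\partial\Omega}$ with $\pm e^{is}$.

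Two further gaps. First, your claim that the trace estimate $\mathcal H^1(\{m=\tau\})\lesssim\nu(\Omega)$ follows ``directly'' by testing \eqref{eq:kin} against functions of $s$ alone is not correct: such test functions give no boundary information. In the paper this is Proposition~\ref{p:trace}, proved via a nontrivial Lagrangian argument (Lemma~\ref{l:gain_dimension}) together with a compactness initialization (Lemma~\ref{l:comp}). Second, your Step~3 geometric route (chord parallelism plus a curvature-sum identity, differentiated in $s$) is a different rigidity mechanism from the one the paper uses, and you have not shown it yields a quantitative $L^2$ bound with the sharp linear dependence on $\nu(\Omega)$; the passage from ``$L^2$-deviation of two scalar quantities'' to $\int(\alpha(t)-t-c)^2\,dt\le C\nu(\Omega)$ is asserted but not argued. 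The paper instead introduces a three-point geometric quantity $a(x_1,x_2,x_3)$ (Definition~\ref{def:a}), proves the purely geometric bound of Proposition~\ref{p:est_a_circle}, and then shows via the Lagrangian representation (Lemma~\ref{l:local_lower_bound}) that $\int a^2\lesssim\nu(\Omega)+\mathcal H^1(\{m=\tau\})$ by exhibiting, for each triple with $a>0$, a packet of curves that must bend by an amount $\sim a$.
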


Theorem~\ref{t:main2} implies Theorem~\ref{t:main} thanks to \eqref{eq:nulowF}.
Similarly, Corollaries~\ref{c:estOmega} and \ref{c:estm} will be consequences of the estimates
\begin{align}
\dist(\partial\Omega,\partial B_1(x_*)) &\leq c \, \nu(\Omega)^{\frac 12},
\label{eq:estOmega}
\\
\int_\Omega \left| m(x)-\alpha \, i \frac{x-x_*}{|x-x_*|}\right|^4\, dx
&\leq c\, 
\, \nu(\Omega)^{\frac 23},
\label{eq:estm}
\end{align}
for some $x_*\in\R^2$ and $\alpha\in\lbrace \pm 1\rbrace$. Next we briefly describe our strategy to prove Theorem~\ref{t:main2}.

\subsection{Strategy of proof}

\subsubsection{A basic geometric argument}

At the heart of our estimates is the following basic geometric argument. Assume $m$ is a zero-energy state, that is, a solution of \eqref{eq:eik} and \eqref{eq:kinzero}, and assume moreover that $m=-\tau$ on $\partial\Omega$. Suppose there are three boundary points $x_k\in\partial\Omega$, $k=1,2,3$, and three directions $e^{i\alpha_k}$ with the following properties:
\begin{enumerate}
\item the three lines $x_k +e^{i\alpha_k}\R$ intersect at a point $z_0\in\Omega$,
\item the direction $e^{i\alpha_k}$ points in the half-cirle determined by the direction $m=-\tau$ at $x_k$, i.e. $e^{i\alpha_k}\cdot \tau(x_k) < 0$,
\item the three directions $e^{i\alpha_k}$ are not contained in the same half-circle.
\end{enumerate}
Such configuration is made impossible by the kinetic equation \eqref{eq:kinzero}, because $\mathbf 1_{m\cdot e^{i\alpha_k}>0}$ must be constant along the line $x_k +\R e^{i\alpha_k}$. By the second property, its constant value must be one for $k=1,2,3$, which implies that $m(z_0)$ has positive scalar product with the three directions $e^{i\alpha_k}$, which is impossible by the third property.
(To make this rigorous actually requires a bit of care and `almost everywhere' statements, as in \cite{JOP}.)
So there are no triplets of points satisfying that condition, and this can be seen to imply that $\partial\Omega$ must be a circle, as it forces the normal lines at any three boundary points to be concurrent.

\subsubsection{A quantitative version}

Our strategy is to make that basic geometric argument quantitative.
Let $a(x_1,x_2,x_3)\geq 0$ quantify the above properties:  $a>0$ if there are three lines from $x_k$ with directions $e^{i\alpha_k}$   intersecting well inside $\Omega$, with $e^{i\alpha_k}\cdot \tau(x_k) \leq -a$ and the three directions are not contained in the $a$-neighborhood of any half-circle.
Note that this is a purely geometric quantity,  defined without any reference to a map $m$.
Let $m$ satisfy the eikonal equation \eqref{eq:eik} and kinetic equation \eqref{eq:kin} with a non-zero dissipation measure $\nu(\Omega)=|\sigma_{\min}|(\Omega\times\R/2\pi\Z)$. 
Compared to the above basic geometric argument, the assumptions on $m$ are relaxed in two ways: $\nu(\Omega)>0$, and the trace $m_{\lfloor \partial\Omega}$ can take values into $\lbrace \pm \tau\rbrace$.
Then we show that
\begin{align}\label{eq:estanu}
\int_{\partial\Omega^3}a(x_1,x_2,x_3)^2\, d (\mathcal H^1)^{\otimes 3} \leq c\, \nu(\Omega) +c\, \mathcal H^1(\lbrace m_{\lfloor\partial\Omega}=\tau\rbrace),
\end{align}
provided $\Omega$ is a priori close enough to a disk.
This a priori  condition will be satisfied if $\nu(\Omega)$ is small enough thanks to a compactness argument and the characterization of zero-energy states \cite{JOP}.
To deal with the trace issue, \eqref{eq:estanu} needs to be complemented with the estimate
\begin{align}\label{eq:traceestim}
\mathcal H^1(\lbrace m_{\lfloor\partial\Omega}=\tau\rbrace) \leq c \, \nu(\Omega),
\end{align} 
provided the left-hand side is a priori small enough.
Again, this a priori condition can be obtained by means of a compactness argument and the characterization of zero-energy states. 
(The compactness argument tells us that, for small $\nu(\Omega)$, one of the complementary subsets $\lbrace m_{\lfloor \partial\Omega}=\tau\rbrace$ or $\lbrace m_{\lfloor \partial\Omega}=-\tau\rbrace$  is small, here we consider without loss of generality only the first case.)
Finally Theorem~\ref{t:main} is obtained by
estimating the deviation of $n_{\partial\Omega}$ from the disk's normal
 with the geometric quantity $a$, which relies on purely geometric considerations (that is, independent of the map $m$).

\subsubsection{Lagrangian representation}

The quantitative estimate \eqref{eq:estanu} is our main new ingredient.
It relies on the Lagrangian representation introduced by the second author in \cite{marconi21ellipse,marconi22structure}, 
which allows to  decompose the dissipation $\nu(\Omega)$ along Lagrangian trajectories. 
Roughly speaking, the dissipation created by one trajectory is the amount by which it deviates from being a straight line. In particular, absence of dissipation ($\nu=0$) is equivalent to Lagrangian trajectories being straight lines. 
With this interpretation in mind, the intuition behind the proof of \eqref{eq:estanu} can be  explained as follows. 
Assume for simplicity that $m_{\lfloor\partial\Omega}=-\tau$.
The basic geometric argument outlined above implies that Lagrangian trajectories meeting three boundary points $x_k$  with directions  close to $e^{i\alpha_k}$   cannot be straight lines if $a>0$:
 they must therefore create dissipation. 
More precisely, for intervals of directions of order $a$ around each $e^{i\alpha_k}$, at least one of the corresponding three trajectories should deviate of order $a$ from being a straight line, and summing these contributions provides a dissipation of order $a^2$, as expressed by \eqref{eq:estanu}.
Many technical details are however needed to make this intuition rigorous. 
In particular, trajectories cannot be considered individually, but in `packets' inside which only a certain amount of trajectories follow that intuition.
Similar arguments are used to prove the trace estimate \eqref{eq:traceestim}.
 
 \subsection{Plan of the article}

The article is organized as follows. In Section~\ref{s:geom} we gather purely geometrical estimates, showing in particular that \eqref{eq:estanu}-\eqref{eq:traceestim} imply Theorem~\ref{t:main} and Corollary~\ref{c:estOmega}. 
In Section~\ref{s:dissip} we prove \eqref{eq:estanu}, 
under the a priori assumption that $\Omega$ is close to a disk.
In Section~\ref{s:comp} we present the compactness argument that allows to lift that a priori assumption.
In Section~\ref{s:trace} we prove the trace estimate \eqref{eq:traceestim}.
In the short Section~\ref{s:proofmain} we gather all previous results to prove Theorem~\ref{t:main2}, Theorem~\ref{t:main} and Corollary~\ref{c:estOmega}.
In Section~\ref{s:proofestm} we prove Corollary~\ref{c:estm}.
In Section~\ref{s:sharp} we prove the sharpness statement of Proposition~\ref{p:sharp}.
In Appendix~\ref{a:ent} we recall the arguments leading to the kinetic formulation~\ref{eq:kin}, showing in particular that they apply to our generalized functional $F_\e$.
In Appendix~\ref{a:ARS} we recall some of the analysis of the model \eqref{eq:ARS} from \cite{ARS},  to emphasize that in that case the total dissipation $\nu(\Omega)$ provides a sharp lower bound.
In Appendix~\ref{a:alt} we present a quantitative proof which allows to bypass the compactness argument of Section~\ref{s:comp} under the additional assumption that $m=-\tau$ on $\partial\Omega$, an assumption relevant for the Aviles-Giga model \eqref{eq:AG} but not for the other models considered here.

\subsection{Notations}

We use the symbol $\lesssim$ to denote inequality up to an absolute multiplicative constant and we write $a \sim b$ if both $a\lesssim b$ and $b\lesssim a$ hold true.
We  systematically identify $\R^2$ and $\mathbb C$, multiplication by $i$ corresponds to rotation by an angle $\pi/2$.
We denote by $\p\colon\R/2\pi\Z\to\partial\Omega$ a $C^{1,1}$ counterclockwise arc-length parametrization of $\partial\Omega$, and by $\tau(\p(s))=\dot\p(s)$, $n_{\partial\Omega} =-i\tau $ the corresponding unit tangent and normal.

\section{Geometric estimates}\label{s:geom}

Here and in the rest of the article, we fix $B_R(x_0)$ a maximal disk contained in $\Omega$.
As explained in the introduction, the proofs of our main results rely on a geometric quantity $a$ defined for triples of boundary points.

\begin{definition}\label{def:a}
Given $\hat x = (x_1,x_2,x_3) \in \partial\Omega^3$, we define $a(\hat x)\geq 0$ as  the maximal value $a\geq 0$
for which there are 
 $\alpha_1,\alpha_2,\alpha_3 \in \R/2\pi\Z$ such that 
\begin{enumerate}
\item the lines $x_k + e^{i\alpha_k}  \R$ are concurrent in $B_{R/2}(x_0)$, namely there are $t_1,t_2,t_3 \in \R$ such that 
\[
x_1 + t_1 e^{i\alpha_1} = x_2 + t_2 e^{i\alpha_2} =x_3 + t_3 e^{i\alpha_3}=z_0 \in B_{R/2}(x_0);
\]
\item $\min(0,\tau (x_k) \cdot e^{i\alpha_k}) \leq -a$ for $k=1,2,3$;
\item $a \le \max\{ l(\alpha_1,\alpha_2,\alpha_3) - \pi, 0 \}$, where $l(\alpha_1,\alpha_2,\alpha_3)$ denotes the length of the shortest interval in $\R/2\pi\Z$ containing $\alpha_1,\alpha_2,\alpha_3$.
\end{enumerate}
Note that each direction $e^{i\alpha_k}$ may be entering, i.e. $t_k>0$, or exiting, i.e. $t_k<0$ (equivalently, $(x_k-z_0)\cdot\tau(x_k)>0$ or $(x_k-z_0)\cdot\tau(x_k)<0$). 
\end{definition}

We observe that $a(\cdot)$ is identically 0 if $\Omega$ is a disk. 
A useful geometric interpretation of $a( \cdot )$ is that $a(x_1,x_2,x_3)$ is bounded below
by the inner radius of the
triangle formed by the three normals to $\partial \Omega$ passing through $x_1$, $x_2$ and $x_3$. See Figure~\ref{f:a}

\begin{figure}[h]
\centering
\def\svgwidth{0.55\columnwidth}
\begingroup%
  \makeatletter%
  \providecommand\color[2][]{%
    \errmessage{(Inkscape) Color is used for the text in Inkscape, but the package 'color.sty' is not loaded}%
    \renewcommand\color[2][]{}%
  }%
  \providecommand\transparent[1]{%
    \errmessage{(Inkscape) Transparency is used (non-zero) for the text in Inkscape, but the package 'transparent.sty' is not loaded}%
    \renewcommand\transparent[1]{}%
  }%
  \providecommand\rotatebox[2]{#2}%
  \newcommand*\fsize{\dimexpr\f@size pt\relax}%
  \newcommand*\lineheight[1]{\fontsize{\fsize}{#1\fsize}\selectfont}%
  \ifx\svgwidth\undefined%
    \setlength{\unitlength}{162.60761608bp}%
    \ifx\svgscale\undefined%
      \relax%
    \else%
      \setlength{\unitlength}{\unitlength * \real{\svgscale}}%
    \fi%
  \else%
    \setlength{\unitlength}{\svgwidth}%
  \fi%
  \global\let\svgwidth\undefined%
  \global\let\svgscale\undefined%
  \makeatother%
  \begin{picture}(1,0.78321193)%
    \lineheight{1}%
    \setlength\tabcolsep{0pt}%
    \put(0,0){\includegraphics[width=\unitlength,page=1]{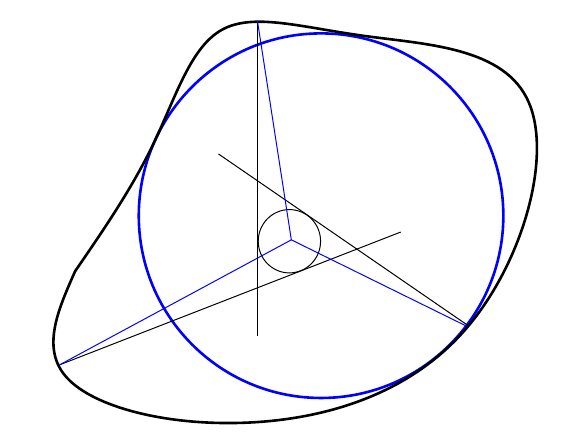}}%
    \put(0.48370742,0.36300949){\makebox(0,0)[lt]{\lineheight{1.25}\smash{\begin{tabular}[t]{l}$z_0$\end{tabular}}}}%
    \put(0.04092376,0.11855596){\makebox(0,0)[lt]{\lineheight{1.25}\smash{\begin{tabular}[t]{l}$x_1$\end{tabular}}}}%
    \put(0.43297181,0.76428218){\makebox(0,0)[lt]{\lineheight{1.25}\smash{\begin{tabular}[t]{l}$x_2$\end{tabular}}}}%
    \put(0.84808152,0.16467924){\makebox(0,0)[lt]{\lineheight{1.25}\smash{\begin{tabular}[t]{l}$x_3$\end{tabular}}}}%
    \put(0.87114313,0.69509722){\makebox(0,0)[lt]{\lineheight{1.25}\smash{\begin{tabular}[t]{l}$\Omega$\end{tabular}}}}%
    \put(0,0){\includegraphics[width=\unitlength,page=2]{a.pdf}}%
    \put(-0.00186175,0.02755095){\makebox(0,0)[lt]{\lineheight{1.25}\smash{\begin{tabular}[t]{l}$\tau(x_1)$\end{tabular}}}}%
  \end{picture}%
\endgroup%

\caption{\small The black lines through the three points $x_1,x_2,x_3 \in \partial \Omega$ are the normals to $\partial \Omega$, while the blue lines have directions $e^{i\alpha_1}, e^{i\alpha_2},e^{i\alpha_3}$ and they are concurrent in the point $z_0$ as in the definition of $a$. In this case $z_0$ is chosen as the center of the incircle of the triangle formed by the normals, and $a$ is of the order of that incircle's radius.}
\label{f:a}
\end{figure}

The quantity $a$ defined in Definition~\ref{def:a} will be useful only if the three segments $[z_0,x_k]$ are contained in $\Omega\cup \lbrace x_k\rbrace$. That is why we define next subsets of $\partial\Omega$ where this will be ensured. Recall that $B_R(x_0)$ is a maximal disk contained in $\Omega$, and consider the set
\[
E_*= \{ x \in \partial \Omega\colon t x + (1-t) x_0 \in \Omega \; \forall t \in (0,1) \},
\]
in some sense the part of the boundary that is star-shaped around $x_0$. And
for every $\eta>0$, we define the subset of $E_*$ given by
\begin{align}\label{eq:Eeta}
E(\eta)= \{ x \in E_* \colon |x-x_0| \leq (1+\eta )R\}.
\end{align}
The main aim of the present section is to prove that the quantity $a$ can be used to estimate the deviation of $\Omega$ from a disk, as follows.

\begin{prop}\label{p:est_a_circle}
Let $\Omega$ as in Theorem~\ref{t:main}.
There exists $\eta_0=\eta_0(K)>0$ such that, if $E(\eta_0)=\partial\Omega$ then 
\[
\dist^2(\partial\Omega,\partial D_1(x_*)) +
\int_{\partial\Omega}\left| n_{\partial\Omega}(x)-\frac{x-x_*}{|x-x_*|}\right|^2 
\lesssim \int_{\partial\Omega^3} a^2 d (\mathcal H^1)^{\otimes 3}.
\]
for some $x_*\in\R^2$.
\end{prop}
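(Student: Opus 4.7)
The plan is to read the quantity $a(\hat x)$ for a triple $\hat x = (x_1, x_2, x_3)$ as an upper bound (up to constants) on the inradius $r(\hat x)$ of the triangle formed by the three normal lines $L_{x_k} := x_k + n_{\partial\Omega}(x_k) \R$, whenever the triple is \emph{non-degenerate} in the sense that the incenter $z(\hat x)$ lies in $B_{R/2}(x_0)$ and the $x_k$ are pairwise well separated along $\partial\Omega$. Indeed, choosing $e^{i\alpha_k}$ as the unit vector from $x_k$ to $z(\hat x)$ (with the sign picked so that condition (2) of Definition~\ref{def:a} has the correct direction) makes the three lines concurrent at $z(\hat x)$, and each $e^{i\alpha_k}$ makes an angle of order $r(\hat x)/|x_k - z(\hat x)| \sim r(\hat x)$ with the inward normal. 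The spread condition (3) is easily met because the $x_k$ surround $z(\hat x)$, so conditions (1)--(3) all hold with parameter of order $r(\hat x)$, yielding $r(\hat x) \lesssim a(\hat x)$. The a priori hypothesis $E(\eta_0) = \partial\Omega$ with $\eta_0$ small (depending on $K$) is what enforces non-degeneracy for a uniform fraction of triples.

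Next I construct a candidate center $x_* \in B_{R/2}(x_0)$ by an averaging/selection argument. Let $G \subset (\partial\Omega)^2$ be the set of pairs whose arclength distance along $\partial\Omega$ lies in $(\delta, 2\pi - \delta)$ for some fixed $\delta > 0$; then $G$ has $(\mathcal H^1)^{\otimes 2}$-measure $\gtrsim 1$, and for $(x_1, x_2) \in G$ the intersection $z(x_1, x_2) := L_{x_1} \cap L_{x_2}$ is well defined and, if $\eta_0$ is small enough, lies in $B_{R/2}(x_0)$. For every third point $x_3 \in \partial\Omega$, $z(x_1, x_2)$ is the vertex of the normal triangle of $\hat x = (x_1, x_2, x_3)$ opposite to $L_{x_3}$, and since the relevant vertex angles are bounded below under non-degeneracy, the altitude from this vertex satisfies $\dist(z(x_1, x_2), L_{x_3}) \lesssim r(\hat x) \lesssim a(\hat x)$. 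Integrating over $(x_1, x_2, x_3) \in G \times \partial\Omega$ and invoking Fubini and a pigeonhole argument, one selects $(x_1^*, x_2^*) \in G$ such that $x_* := z(x_1^*, x_2^*)$ satisfies
\[
\int_{\partial\Omega} \bigl|(x - x_*) \cdot \tau(x)\bigr|^2 \, d\mathcal H^1(x) = \int_{\partial\Omega} \dist(x_*, L_x)^2 \, d\mathcal H^1(x) \lesssim \int_{(\partial\Omega)^3} a^2 \, d(\mathcal H^1)^{\otimes 3}.
\]

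Finally I convert this bound into the two summands of the proposition. Since $x_* \in B_{R/2}(x_0)$ ensures $|x - x_*| \gtrsim 1$ uniformly on $\partial\Omega$, an elementary pointwise estimate yields
\[
\left| n_{\partial\Omega}(x) - \frac{x - x_*}{|x - x_*|} \right|^2 \lesssim |(x - x_*) \cdot \tau(x)|^2,
\]
giving the normal-deviation summand after integration. For the Hausdorff distance, I parametrize $\partial\Omega$ in polar coordinates $r(\theta)$ centered at $x_*$ (possible once $\eta_0$ is chosen small enough to guarantee that $\Omega$ is star-shaped around $x_*$) and note that a direct calculation gives
\[
\int_0^{2\pi} r'(\theta)^2 \, d\theta \lesssim \int_{\partial\Omega} \left| n_{\partial\Omega}(x) - \frac{x - x_*}{|x - x_*|} \right|^2 d\mathcal H^1(x).
\]
Sobolev embedding on the circle combined with Poincar\'e produces $\|r - \bar r\|_\infty^2 \lesssim \int (r')^2$, while the perimeter constraint $\int_0^{2\pi} \sqrt{r^2 + (r')^2}\, d\theta = 2\pi$ pins $|\bar r - 1| \lesssim \int (r')^2$, so $\dist^2(\partial\Omega, \partial B_1(x_*)) \le \|r - 1\|_\infty^2 \lesssim \int_{(\partial\Omega)^3} a^2$, completing the proof.

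The main obstacle lies in the quantitative geometric claim $r(\hat x) \lesssim a(\hat x)$ of the first paragraph: one must verify the sign convention in condition (2) of Definition~\ref{def:a} (the incenter has to sit on the ``correct'' side of each normal with respect to $\tau$) and the spread condition (3) with parameter of order $r(\hat x)$, for sufficiently many triples. Both issues are handled by exploiting the a priori closeness of $\partial\Omega$ to a disk guaranteed by $E(\eta_0) = \partial\Omega$ with $\eta_0$ small in terms of $K$, which also legitimizes the non-degeneracy and star-shapedness assumptions used throughout.
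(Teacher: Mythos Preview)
Your proposal shares the paper's central geometric insight: for triples $\hat x$ whose normals form a well-conditioned triangle with incenter in $B_{R/2}(x_0)$, the inradius satisfies $r(\hat x)\lesssim a(\hat x)$, and hence the distance from a vertex $z(x_1,x_2)=L_{x_1}\cap L_{x_2}$ to the third normal $L_{x_3}$ is controlled by $a(\hat x)$. Your conversion from $\int|(x-x_*)\cdot\tau|^2$ to $\int|n_{\partial\Omega}-n_*|^2$ is correct, and your polar-coordinate/Sobolev argument for the Hausdorff distance is a valid alternative to the paper's Lemma~\ref{l:aux}.

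The gap is in the single-pair selection. Your pigeonhole step picks one $(x_1^*,x_2^*)\in G$ and then claims
\[
\int_{\partial\Omega}\dist(x_*,L_{x_3})^2\,d\mathcal H^1(x_3)\ \lesssim\ \int_{(\partial\Omega)^3}a^2,
\]
but the bound $\dist(x_*,L_{x_3})\lesssim a(x_1^*,x_2^*,x_3)$ was only justified under non-degeneracy, i.e.\ with $x_3$ well separated from $x_1^*$ and $x_2^*$. When $x_3$ is close to $x_1^*$, the three directions $e^{i\alpha_k}$ (however the signs are chosen) satisfy $l(\alpha_1,\alpha_2,\alpha_3)\le\pi+O(|x_3-x_1^*|)$, so condition~(3) forces $a(\hat x)\lesssim|x_3-x_1^*|$, which carries no information about $\dist(x_*,L_{x_3})$. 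The contribution of these two $\delta$-neighborhoods to the left-hand side is of order $K^2\delta^3$, a fixed constant not dominated by $\int a^2$ when the latter is small.

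The paper circumvents this by never letting the third point approach the other two: it partitions $\partial\Omega$ into $24$ overlapping arcs $I_j$ of aperture $\pi/6$, and for each $I_j$ chooses (via Fubini) reference points $\bar x_2\in I_{j+8}$, $\bar x_3\in I_{j+16}$ at roughly $120^\circ$, so every triple used is uniformly non-degenerate. This yields local centers $z_j$ with $\int_{I_j}|n_{\partial\Omega}-n_{z_j}|^2\lesssim\int a^2$; the overlap $I_j\cap I_{j+1}$ then forces $|z_j-z_{j+1}|^2\lesssim\int a^2$, so all $z_j$ coincide up to the right error and one sets $x_*=z_1$. This gluing step is precisely what replaces your single pigeonhole and closes the blind spots near $x_1^*,x_2^*$.
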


\subsection{A few preliminary geometric facts}

First we show that boundary points close to the maximal inscribed circle must have a unit normal close to radial (with respect to the inscribed circle's center).

\begin{lem}\label{l:geom1}
Let $\Omega$ be a $C^{1,1}$ simply connected domain with $\mathcal H^1(\partial\Omega)=2\pi$, $\sup_{\partial\Omega}|\kappa|\leq K$ and denote by $B_R(x_0)$ a maximal disk contained in $\Omega$. Then $1/K\leq R \leq 1$ and  for every $x \in \partial \Omega$ we have
\[
\left| \tau(x) \cdot \frac{x-x_0}{|x-x_0|}\right| \le 2 \sqrt{ K \dist (x,\partial B_R(x_0)) }.
\]
\end{lem}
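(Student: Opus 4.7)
The statement has three parts: the upper bound $R\le 1$, the lower bound $R\ge 1/K$, and the main inequality. For $R\le 1$, I would use that, since $\Omega$ is simply connected, bounded, and contains $B_R(x_0)$, the radial retraction
\[
\pi\colon\partial\Omega\to\partial B_R(x_0), \qquad \pi(x)=x_0 + R\,\frac{x-x_0}{|x-x_0|},
\]
is well-defined (every $x\in\partial\Omega$ satisfies $|x-x_0|\ge R$), surjective (every ray from $x_0$ must exit $\Omega$), and 1-Lipschitz (a direct computation of its differential gives $\|d\pi_x\|=R/|x-x_0|\le 1$). Hence $2\pi R = \mathcal H^1(\partial B_R(x_0))\le\mathcal H^1(\partial\Omega)=2\pi$. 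For $R\ge 1/K$ I would invoke the standard rolling-ball characterization of $C^{1,1}$ domains: the curvature bound $|\kappa|\le K$ is equivalent to the uniform interior ball condition with radius $1/K$; hence a ball of radius $1/K$ fits inside $\Omega$, and maximality of $R$ forces $R\ge 1/K$.

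For the main estimate, the idea is to view the radial distance from $x_0$ as a scalar function along the boundary and extract information from $r\ge R$ via a one-dimensional Taylor expansion. Let $v=(x-x_0)/|x-x_0|$, let $\gamma$ be the arc-length parametrization of $\partial\Omega$ with $\gamma(0)=x$ and $\dot\gamma(0)=\tau(x)$, and set $r(s):=|\gamma(s)-x_0|$. Differentiating $r^2=(\gamma-x_0)\cdot(\gamma-x_0)$ twice gives $\dot r(0)=v\cdot\tau(x)$ and, a.e.\ in $s$,
\[
\ddot r=\frac{1-\dot r^2+(\gamma-x_0)\cdot\ddot\gamma}{r}\le\frac{1}{r}+K\le\frac{1}{R}+K,
\]
using $|\dot r|\le1$, $|\ddot\gamma|\le K$, and $r\ge R$. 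Taylor's formula with integral remainder (valid since $\gamma\in C^{1,1}$) then yields
\[
r(s)\le r(0)+s\,\dot r(0)+\frac{s^2}{2}\Bigl(\frac{1}{R}+K\Bigr).
\]
Combining with $r(s)\ge R$ and setting $d:=r(0)-R=\dist(x,\partial B_R(x_0))$ gives, for every $s\in\R$,
\[
0\le d+s\,\dot r(0)+\frac{s^2}{2}\Bigl(\frac{1}{R}+K\Bigr).
\]
Minimizing the right-hand side in $s$ (the optimum is $s=-\dot r(0)/(1/R+K)$) produces $\dot r(0)^2\le 2(1/R+K)\,d\le 4Kd$, where the last step uses the already-established $R\ge 1/K$. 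Since $\dot r(0)=\tau(x)\cdot v$, this is precisely the stated bound.

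I don't expect any real obstacle: the two inputs $R\le 1$ and $R\ge 1/K$ are classical (a Jordan-curve length comparison and the rolling-ball lemma respectively), and the core calculation is a single Taylor expansion. The only small sleight of hand is choosing $s$ optimally to convert the pointwise constraint $r(s)\ge R$ into the quadratic inequality $|\dot r(0)|^2\lesssim Kd$, but this is routine.
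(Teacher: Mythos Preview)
Your proof is correct and follows essentially the same route as the paper's. Both set up $\psi(s)=|\gamma(s)-x_0|-R\ge 0$, bound $|\psi''|\le K+1/R\le 2K$, and deduce $(\psi')^2\le 4K\psi$; the only difference is in how that last elementary inequality is extracted---the paper uses a maximum-principle argument on the auxiliary function $\varphi=a\|\psi''\|_\infty\psi-(\psi')^2$, while you use a Taylor expansion and minimize in $s$, which are two standard equivalent devices. For $R\le 1$ you use the $1$-Lipschitz radial retraction onto $\partial B_R(x_0)$ instead of the isoperimetric inequality, which is a slightly more elementary alternative; for $R\ge 1/K$ both proofs simply invoke a classical result (the paper cites Pestov--Ionin/Howard--Treibergs, you cite the rolling-ball characterization---note that the precise statement you need, that a single ball of radius $1/K$ fits inside a simply connected planar domain with $|\kappa|\le K$, is exactly Pestov--Ionin rather than the local two-sided ball equivalence, so your phrasing is slightly imprecise though the conclusion is the same).
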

\begin{proof}[Proof of Lemma~\ref{l:geom1}]
The isoperimetric inequality ensures $R\leq 1$.
For a proof of the property $R\ge 1/K$ we refer to \cite{pestov-ionin,howard-treibergs}.
Let us consider an arc-length parametrization $\p: \R/2\pi\Z \to \R^2$ of $\partial \Omega$ and let $\psi\colon \R/2\pi\Z \to \R$ be defined by 
\begin{equation*}
\psi(s) = |\p(s) - x_0| - R = \dist ( \p(s), \partial B_R(x_0)).
\end{equation*}
In particular we have
\begin{equation}\label{e:der_psi}
\begin{split}
\psi'(s)  & = \frac{\dot \p(s) \cdot (\p(s) - x_0)}{|\p(s) - x_0|} = \tau (\p(s)) \cdot  \frac{\p(s) - x_0}{|\p(s) - x_0|},\\
 \psi''(s) &=\frac{\ddot\p(s)\cdot (\p(s) - x_0)}{|\p(s) - x_0|} - 
\frac{|\dot \p (s) \cdot (\p(s) - x_0)|^2}{|\p(s)-x_0|^3}.
\end{split}
\end{equation}
and therefore 
\begin{equation*}
\|\psi''\|_{L^\infty} \le K + \frac{1}{R} \le 2K.
\end{equation*}
Now consider, for any $a>2$, the function
\begin{align*}
\varphi(s)=a\,\|\psi''\|_{L^\infty} \psi(s) -\psi'(s)^2,
\end{align*}
which is $C^1$ with derivative
\begin{align*}
\varphi'(s)&=a\,\|\psi''\|_{L^\infty} \psi'(s)-2\psi'(s)\psi''(s)\\
&=\left(a\,\|\psi''\|_{L^\infty}-2\psi''(s)\right) \psi'(s).
\end{align*}
The first factor is positive since $a>2$. Hence at a minimal point $s_0$ of $\varphi$ one must have $\psi'(s_0)=0$ and so $\varphi(s_0)=a\,\|\psi''\|_{L^\infty} \psi(s_0)\geq 0$. Therefore $\varphi$ is a nonnegative function. As this is valid for any $a>2$ we deduce that
\begin{align*}
\psi'(s)^2\leq 2 \|\psi''\|_{L^\infty} \psi(s)\leq 4K \, \psi(s).
\end{align*}
Taking the square root and recalling the expression of $\psi$ and $\psi'$ concludes the proof.
\end{proof}

The next lemma ensures that $a(\hat x)$ is meaningful whenever $\hat x\in E(\eta)^3$ for sufficiently small $\eta>0$.

\begin{lem}\label{l:segment}
Let $\Omega$ be a $C^{1,1}$ simply connected domain with $\mathcal H^1(\partial\Omega)=2\pi$ and $\sup_{\partial\Omega}|\kappa|\leq K$.
There exists $\eta_0>0$ depending only  on $K$ such that, for all $x\in E(\eta_0)$ and any $z\in B_{2R/3}(x_0)$, the segment $[z,x]$ is included in $\Omega\cup\lbrace x\rbrace$. 
\end{lem}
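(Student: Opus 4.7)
The plan is a continuous deformation argument. Fix $x\in E(\eta_0)$ and $z\in B_{2R/3}(x_0)$ and consider the path $z_t=(1-t)x_0+tz\in B_{2R/3}(x_0)\subset\Omega$ for $t\in[0,1]$, together with the set
\[
T=\bigl\{t\in[0,1]:[z_t,x]\subset\Omega\cup\{x\}\bigr\}.
\]
Since $x\in E_*$ we have $0\in T$. I will show $T$ is both open and closed in $[0,1]$, so by connectedness $T=[0,1]$ and in particular $1\in T$, which is the desired conclusion.

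The key geometric input, from Lemma~\ref{l:geom1} combined with $|p-x_0|\geq R$ (since $B_R(x_0)\subset\Omega$) and $R\leq 1$ (isoperimetry), is that for every $p\in\partial\Omega$ with $|p-x_0|\leq (1+\eta_0)R$, the tangent line $p+\R\tau(p)$ has distance
\[
|p-x_0|\sqrt{1-(\tau(p)\cdot(p-x_0)/|p-x_0|)^2}\geq R\sqrt{1-4K\eta_0 R}
\]
from $x_0$, which exceeds $2R/3$ provided $\eta_0\leq 5/(36K)$. I fix $\eta_0$ to be any such value.

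Openness of $T$ is a consequence of the transversality of the segments at $x$: the condition $x\in E_*$ forces $n_{\partial\Omega}(x)\cdot (x-x_0)\geq 0$, and combining with the bound above at $p=x$ together with $|z_t-x_0|<2R/3$ yields
\[
n_{\partial\Omega}(x)\cdot (x-z_t)\geq R\sqrt{1-4K\eta_0 R}-|z_t-x_0|>0
\]
uniformly in $t\in[0,1]$. A routine argument using the $C^{1,1}$ signed distance function to $\partial\Omega$ then gives openness: the segments $[z_t,x]$ stay strictly inside $\Omega\cup\{x\}$ for $t$ near any $t_0\in T$.

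Closedness is the main step. Arguing by contradiction, if $t_n\in T$ and $t_n\to t^*\notin T$, passing to the limit in the signed distance function gives a point $p^*\in[z_{t^*},x]\cap\partial\Omega$ with $p^*\neq x$ which is an interior minimum of the signed distance along the segment. First-order optimality forces the line through $z_{t^*}$ and $x$ to be tangent to $\partial\Omega$ at $p^*$. By convexity of $|\cdot-x_0|$ along the segment, $|p^*-x_0|\leq\max(|z_{t^*}-x_0|,|x-x_0|)\leq(1+\eta_0)R$, so the key geometric input applies at $p^*$: the tangent line at $p^*$ must have distance at least $2R/3$ from $x_0$, contradicting the fact that it passes through $z_{t^*}\in B_{2R/3}(x_0)$. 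The hard part of the argument is the closedness step, and specifically the observation that the tangency point $p^*$ is automatically close enough to $\partial B_R(x_0)$ for Lemma~\ref{l:geom1} to give useful information; this follows from the convexity of $|\cdot-x_0|$ on the segment connecting $z_{t^*}\in B_{2R/3}(x_0)$ to $x\in E(\eta_0)$.
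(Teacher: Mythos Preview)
Your proof is correct and shares the paper's high-level strategy: a connectedness argument that, at the critical parameter value, produces a point $p^*\in\partial\Omega$ where the segment is tangent to $\partial\Omega$, and then derives a contradiction from Lemma~\ref{l:geom1}. The parametrisation, however, is different. The paper keeps $x$ fixed and rotates the direction of the half-line $L_\theta=x+[0,\infty)e^{i\theta}$ from $\theta_0$ (the direction towards $x_0$) until the first angle $\theta_1$ at which the segment to $B_{2R/3}(x_0)$ fails to stay in $\Omega$; it then applies Lemma~\ref{l:geom1} at the resulting tangency point $y$. You instead move the far endpoint along the straight path $z_t=(1-t)x_0+tz$ and argue via openness/closedness of the set $T$.

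Your route has one small advantage: the reason the tangency point $p^*$ lies within $(1+\eta_0)R$ of $x_0$ is transparent, coming directly from the convexity of $|\cdot-x_0|$ along the segment joining $z_{t^*}\in B_{2R/3}(x_0)$ to $x\in E(\eta_0)$. In the paper's angular parametrisation the analogous localisation of the tangency point is handled somewhat less explicitly. On the other hand, the paper's argument avoids splitting into separate openness and closedness verifications. Both approaches give the same quantitative dependence $\eta_0\sim 1/K$.
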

\begin{proof}[Proof of Lemma~\ref{l:segment}]
Let $x\in E(\eta_0)$, and write $x_0=x+re^{i\theta_0}$ for some $r\in [R,(1+\eta_0)R]$ and $\theta_0\in\R$. Denote by $L_\theta=x+[0,\infty)e^{i\theta}$ the half line from $x$ in direction $\theta$. This half-line has a nontrivial intersection with $B_{2R/3}(x_0)$ if and only if $\theta\in (\theta_0-\alpha,\theta_0+\alpha) +2\pi\Z$, 
where $\alpha=\arcsin(2R/(3r))=\arcsin(2/3) +\mathcal O(\eta_0)\leq\pi/4$ if $\eta_0$ is small enough.
By definition \eqref{eq:Eeta} of $E(\eta)$ and of $E_*$, for all $z\in B_{2R/3}(x_0)\cap L_{\theta_0}$ the segment $[z,x]$ is included in $\Omega\cup\lbrace x\rbrace$. 
If the conclusion of Lemma~\ref{l:segment} is not true, by continuity we may therefore find $\theta_1\in (\theta_0-\alpha,\theta_0+\alpha)$, $\theta_1\neq\theta_0$, and $z_1\in B_{2R/3}(x_0)\cap L_{\theta_1}$ such that $[z_1,x]$ is not included in $\Omega\cup\lbrace x\rbrace$, while that property holds for $z\in B_{2R/3}(x_0)\cap L_\theta$ if $\theta\in (\theta_0,\theta_1)$. 
This implies the existence of $y \in [z_1,x]\cap\partial\Omega\setminus\lbrace x\rbrace$, with tangent vector $\tau(y)=e^{i\theta_1}$ and  $|x-y|\lesssim R\eta_0$.
In particular we have $(y-x_0)/|y-x_0|=e^{i\theta}$ with $|\theta-\theta_0|\lesssim R\eta_0$, and by Lemma~\ref{l:geom1} applied to the boundary point $y$ we infer
\begin{align*}
|e^{i\theta_1}\cdot e^{i\theta_0}|^2\lesssim  KR\eta_0\lesssim c_0K\eta_0.
\end{align*}
As $|e^{i\theta_1}\cdot e^{i\theta_0}|^2 \geq \cos^2\alpha \geq 1/2$ this implies $\eta_0\gtrsim 1/K$, hence choosing $\eta_0=1/(CK)$ for a large enough absolute constant $C$ ensures the validity of Lemma~\ref{l:segment}.
\end{proof}

We also remark that, for a connected component of $\partial\Omega \cap \overline B_{(1+\eta)R}(x_0)$ to be contained in $E(\eta)$, it is sufficient that one of its elements belongs to $E(\eta)$.

\begin{lem}\label{l:continuation}
Let $\eta \in [0, 1/(4K)]$ and assume that $\bar x = \p(\bar s) \in E(\eta)$ and $s_1,s_2 \in \R $ are such that 
\[
\bar s\in [s_1,s_2] \qquad \mbox{and} \qquad |\p(s) - x_0| \leq  (1+\eta)R \quad \forall s \in [s_1,s_2].
\]
Then $\p((s_1,s_2))\subset E(\eta)$.
\end{lem}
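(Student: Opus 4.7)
My plan is to establish the lemma by a connectedness argument on the set
\[
U := \{ s \in (s_1,s_2) : g(s) \in E_* \}.
\]
Since $|g(s) - x_0| \leq (1+\eta)R$ is assumed throughout $[s_1,s_2]$, membership in $U$ is equivalent to $g(s) \in E(\eta)$, so it will suffice to show that $U$ is nonempty, open, and closed in $(s_1,s_2)$. First I dispose of the boundary case $R = 1$: combining $B_R(x_0) \subset \Omega$ with $\mathcal H^1(\partial\Omega) = 2\pi$ and the isoperimetric inequality forces $\Omega = B_1(x_0)$, making the conclusion trivial. In the remaining case $R < 1$, which I henceforth assume, one has the strict bound $4K\eta R \leq R < 1$.

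The key quantitative input is Lemma~\ref{l:geom1}: for any $y \in \partial\Omega$ with $R \leq |y-x_0| \leq (1+\eta)R$,
\[
\left| \tau(y) \cdot \frac{y-x_0}{|y-x_0|} \right|^2 \leq 4K(|y-x_0| - R) \leq 4K\eta R < 1,
\]
which means that every line through $x_0$ and a point $y \in \partial\Omega \cap \overline{B_{(1+\eta)R}(x_0)}$ crosses $\partial\Omega$ transversally at $y$, with angle bounded away from zero. Closedness of $U$ then follows by the implicit function theorem: if $s_n \in U$ converges to $s \notin U$, there exists $y \in (x_0, g(s)) \cap \partial\Omega$, and by the transversality above this intersection perturbs to nearby intersections of $(x_0, g(s_n))$ with $\partial\Omega$, contradicting $g(s_n) \in E_*$.

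Openness is where the main obstacle lies. Assuming $s \in U$, $s_n \to s$ and $y_n \in (x_0, g(s_n)) \cap \partial\Omega$, I extract a subsequential limit $y_\infty \in [x_0, g(s)]$. The cases $y_\infty = x_0$ and $y_\infty \in (x_0, g(s))$ contradict $y_n \in \partial\Omega$ directly (since $x_0$ has a neighborhood in $\Omega$, and $(x_0,g(s)) \subset \Omega$ by $g(s) \in E_*$). The delicate case is $y_\infty = g(s)$, where both $y_n$ and $g(s_n)$ collapse onto $g(s)$ along the line through $x_0$ and $g(s)$. I handle this in local $C^{1,1}$ graph coordinates at $g(s)$: writing $\partial\Omega$ as $v = h(u)$ with $h(0)=h'(0)=0$, $\Omega$ locally equal to $\{v > h(u)\}$, and $x_0 = (a,b)$, the transversality bound above yields $b > 0$. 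Setting $y_n = (u_n, h(u_n))$ and $g(s_n) = (\xi_n, h(\xi_n))$ with $u_n \neq \xi_n$, the auxiliary function
\[
\phi_n(\xi) := h(\xi) - b - (\xi - a)\,\frac{h(\xi_n) - b}{\xi_n - a}
\]
vanishes at both $\xi_n$ and $u_n$ (colinearity of $x_0$, $y_n$, $g(s_n)$), so Rolle's theorem provides $\zeta_n$ between them with
\[
h'(\zeta_n) = \frac{h(\xi_n) - b}{\xi_n - a}.
\]
As $n \to \infty$, the left-hand side tends to $h'(0) = 0$; the right-hand side tends to $b/a$ if $a \neq 0$ (forcing $b = 0$, contradicting $b > 0$), or diverges if $a = 0$ (contradicting local boundedness of $h'$). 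Either subcase yields a contradiction.

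Finally, $U$ is nonempty: if $\bar s \in (s_1,s_2)$ then $\bar s \in U$ directly, and otherwise the openness argument applied at the endpoint $\bar s$ produces nearby $s \in (s_1,s_2)$ with $g(s) \in E_*$. By connectedness of $(s_1,s_2)$ we then conclude $U = (s_1,s_2)$, which is the desired statement.
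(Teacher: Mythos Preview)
Your proof is correct and follows the same connectedness strategy as the paper, resting on the same transversality input from Lemma~\ref{l:geom1}. The organization differs slightly: the paper works with the maximal interval $I\subset(s_1,s_2)$ on which $\p\in E_*$, asserts that $I$ is open, and then shows that at an extremity $\tilde s\in(s_1,s_2)$ the segment $(x_0,\p(\tilde s))$ must meet $\partial\Omega$ at a \emph{tangential} point $\tilde x$ (because $\partial\Omega$ near $\tilde x$ is trapped on one side of the line by the swept region $\bigcup_{s\in I}(x_0,\p(s))\subset\Omega$), which immediately contradicts the strict bound $|\tau(\tilde x)\cdot(\tilde x-x_0)/|\tilde x-x_0||<1$ coming from Lemma~\ref{l:geom1} and $|\tilde x-x_0|<(1+\eta)R$. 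You instead separate openness and closedness: your closedness step is equivalent to the paper's tangency argument (you phrase it via persistence of a transversal intersection rather than impossibility of a tangential one), while your openness step---the ``delicate case'' $y_\infty=g(s)$ handled in local graph coordinates via Rolle's theorem---supplies detail that the paper leaves implicit when it states without proof that $I$ is open. Your explicit treatment of the boundary case $R=1$ and of the endpoint case $\bar s\in\{s_1,s_2\}$ are also more careful than the paper's version.
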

\begin{proof}[Proof of Lemma~\ref{l:continuation}]
Since we assume that $|\p(s) - x_0| \le (1+\eta)R$ for all $s\in [s_1,s_2]$, it only remains to show that $\p((s_1,s_2))\subset E_*$, that is, for all $s\in (s_1,s_2)$ the line interval $\lbrace (1-t)x_0 +t \p(s)\colon 0<t<1\rbrace$ is contained in $\Omega$. Consider the largest interval $I\subset (s_1,s_2)$ containing $\bar s$ and such that $\p(s)\in E_*$ for all $s\in I$. Note that $I$ is open and non-empty. Assume by contradiction that $(s_1,s_2)\setminus I\neq \emptyset$, and denote by $\tilde s\in (s_1,s_2)$ an extremity of $I$. Then by maximality of $I$ the line interval $\lbrace (1-t)x_0 +t \p(\tilde s)\colon 0<t<1\rbrace$ intersects $\partial\Omega$: there exists $t\in (0,1)$ such that $\tilde x=(1-t)x_0 +t \p(\tilde s) \in\partial\Omega$. By definition of $I$, locally near $\tilde x$ the $C^2$ curve $\partial\Omega$ stays on one side of the line $x_0 +\R (\p(\tilde s)-x_0)$, hence it must be tangent to that line. Therefore we have $\tau(x)=\pm (\tilde x-x_0)/|\tilde x -x_0|$. By Lemma~\ref{l:geom1}, and since $R\leq |\tilde x-  x_0|<|\p(\tilde s)-x_0|\leq (1+\eta)R$  this implies $2\sqrt {K\eta R}>1$, in contradiction with the assumption that $\eta \in [0, 1/(4K)]$ and the fact that $R\leq 1$ (by isoperimetric inequality).
\end{proof}

Finally we remark that the function $a$ is Lipschitz.

\begin{lem}\label{l:a_Lip}
The function $a$ is Lipschitz on $\partial\Omega^3$ (with respect to the geodesic distance), with Lipschitz constant $L\lesssim K$.
\end{lem}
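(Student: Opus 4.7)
The plan is to prove the Lipschitz estimate by a direct perturbation argument: given $\hat x, \hat y \in \partial\Omega^3$, construct competitor directions for $a(\hat y)$ from the optimal ones for $a(\hat x)$, by preserving the concurrence point. Without loss of generality we may assume $a(\hat x) \geq a(\hat y)$, and first I would check (by a compactness argument, since the set of admissible quadruples $(a, \alpha_1, \alpha_2, \alpha_3)$ in the definition is closed and bounded) that the supremum defining $a(\hat x)$ is in fact attained; call the optimizers $\alpha_k$ with common concurrence point $z_0 \in \overline{B_{R/2}(x_0)}$.

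Next I would define the candidate directions for $\hat y$ by requiring the lines through $y_k$ to pass through the very same point $z_0$, with the sign of $e^{i\beta_k}$ chosen to match the sign of $e^{i\alpha_k}$ (entering or exiting, i.e.\ $\mathrm{sign}(t_k)$ is preserved). Condition (1) is then automatically satisfied. The key geometric input is the lower bound $|z_0 - x_k|, |z_0 - y_k| \geq R/2 \geq 1/(2K)$, which follows from $B_R(x_0)$ being a maximal inscribed disk (so $x_k, y_k$ are outside $B_R(x_0)$) and $z_0 \in B_{R/2}(x_0)$. A short computation on unit vectors $(z_0-x_k)/|z_0-x_k|$ versus $(z_0-y_k)/|z_0-y_k|$ then yields
\[
|e^{i\alpha_k}-e^{i\beta_k}| \;\lesssim\; \frac{|x_k-y_k|}{R} \;\lesssim\; K\,|x_k-y_k|_{\partial\Omega}.
\]

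For condition (2), I would combine this with the fact that $\tau$ is Lipschitz on $\partial\Omega$ with constant $\sup|\kappa|\leq K$:
\[
\bigl|\tau(y_k)\cdot e^{i\beta_k}-\tau(x_k)\cdot e^{i\alpha_k}\bigr|\lesssim K\,d_{\partial\Omega}(x_k,y_k),
\]
so $\min(0,\tau(y_k)\cdot e^{i\beta_k})\leq -a(\hat x)+C K\, d(\hat x,\hat y)$, where $d$ denotes the geodesic distance on $\partial\Omega^3$. For condition (3), the direction perturbation bound above gives
\[
l(\beta_1,\beta_2,\beta_3)\geq l(\alpha_1,\alpha_2,\alpha_3)-C K\, d(\hat x,\hat y),
\]
so $\max\{l(\beta_1,\beta_2,\beta_3)-\pi,0\}\geq a(\hat x)-C K\,d(\hat x,\hat y)$. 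Combining these two estimates, the value $a(\hat x)-CK\,d(\hat x,\hat y)$ (when nonnegative) is admissible for $\hat y$, giving
\[
a(\hat y)\geq a(\hat x)-CK\,d(\hat x,\hat y).
\]
The reverse inequality under the assumption $a(\hat x)\geq a(\hat y)$ is trivial, concluding the proof with $L\lesssim K$.

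The only mildly delicate point is choosing the sign of $e^{i\beta_k}$ correctly when $z_0$ is very close to a particular $x_k$ or $y_k$; but since the distance $|z_0 - x_k|$ is uniformly bounded below by $R/2$, no degeneracy occurs and the signed direction depends continuously on the endpoint. I expect this to be a short, mostly bookkeeping argument with no serious obstacle.
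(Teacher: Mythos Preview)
Your proposal is correct and follows essentially the same approach as the paper's own proof: keep the concurrence point $z_0$ fixed, use the lower bound $|z_0-x_k|\ge R/2$ (from $R\ge 1/K$) to control the angular perturbation by $K\,d(\hat x,\hat y)$, and combine with the $K$-Lipschitz bound on $\tau$ to verify conditions (2) and (3) for the perturbed triple. The only cosmetic differences are your explicit compactness remark for attainment of the maximum and the sign-matching discussion, both of which the paper leaves implicit.
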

\begin{proof}
Let $\hat x=(x_1,x_2,x_3)\in \partial\Omega^3$, and $\hat\alpha =(\alpha_1,\alpha_2, \alpha_3)\in (\R/2\pi\Z)^3$ as in the definition of $a(\hat x)$. Denote by $z_0\in B_{R/2}(x_0)$ the intersection point of the three lines $x_k+e^{i\alpha_k}\R$. Let $\hat x'=(x'_1,x'_2,x'_3)\in \partial\Omega^3$. Since $z_0$ lies at a distance at least $R/2$ of each $x_k$, the three concurrent lines connecting $x'_k$ to $z_0$ are of the form $x'_k+e^{i\alpha'_k}\R$ for some $\hat\alpha' =(\alpha'_1,\alpha'_2, \alpha'_3)\in (\R/2\pi\Z)^3$ such that
\begin{align*}
|\alpha_k-\alpha'_k |\lesssim \frac 1R |x_k-x_k'| \lesssim K \dist(\hat x,\hat x')
\qquad\forall k\in\lbrace 1,2,3\rbrace.
\end{align*}
Therefore we have
\begin{align*}
\max(l(\hat\alpha')-\pi,0)
&\geq 
\max(l(\hat\alpha)-\pi,0) - C K \dist(\hat x,\hat x')\\
&\geq a(\hat x) - C K \dist(\hat x,\hat x'),
\end{align*}
for some absolute constant $C>0$. Moreover by definition of $K$ we have
\begin{align*}
|\tau(x_k)-\tau(x'_k)|\lesssim K\dist(\hat x,\hat x')\qquad\forall k\in\lbrace 1,2,3\rbrace,
\end{align*}
and therefore
\begin{align*}
\tau(x'_k)\cdot e^{i\alpha_k'}
&\leq 
\tau(x_k)\cdot e^{i\alpha_k} +C  |\tau(x_k) -\tau(x_k')| + C |\alpha_k-\alpha_k'|
\\
& \leq -a(\hat x) - C K \dist(\hat x,\hat x').
\end{align*}
This shows that
\begin{align*}
a(\hat x')\geq a(\hat x) - C K \dist(\hat x,\hat x').
\end{align*}
Exchanging the roles of $\hat x$ and $\hat x'$ we conclude that $|a(\hat x')- a(\hat x) |\lesssim  K \dist(\hat x,\hat x')$.
\end{proof}

\subsection{Proof of Proposition~\ref{p:est_a_circle}}

We start by remarking that the distance between $\partial\Omega$ and a unit circle is controlled by the $L^1$-difference of their normals.

\begin{lem}\label{l:aux}
If $\Omega$ is a simply connected $C^1$ domain such that $\mathcal H^1(\partial\Omega)=2\pi$, then for any $x_*\in\Omega$ such that $\Omega$ is strictly star-shaped around $x_*$, we have
\begin{align*}
\dist(\partial\Omega,\partial D_1(x_*))
& \leq  \int_{\partial\Omega}\left|n_{\partial\Omega}(x)-\frac{x-x_*}{|x-x_*|}\right| \, d\mathcal H^1(x)
\end{align*}
\end{lem}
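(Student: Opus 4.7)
The plan is to work in polar coordinates centered at $x_*$. After translating we may assume $x_*=0$. By the strict star-shapedness hypothesis, $\partial\Omega$ admits a parametrization $\theta\mapsto\rho(\theta)e^{i\theta}$ for some positive $\rho\in C^1(\R/2\pi\Z)$. With $r:=\sqrt{\rho^2+\rho'^2}$ (so that $d\mathcal H^1=r\,d\theta$), a direct computation yields $n_{\partial\Omega}=e^{i\theta}(\rho-i\rho')/r$, hence
\[
\Big|n_{\partial\Omega}(x)-\frac{x}{|x|}\Big|\,d\mathcal H^1 \;=\; \sqrt{(r-\rho)^2+\rho'^2}\,d\theta \;\geq\; |\rho'|\,d\theta,
\]
and the distance from $x=\rho(\theta)e^{i\theta}\in\partial\Omega$ to $\partial D_1(0)$ is exactly $|\rho(\theta)-1|$. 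The lemma thus reduces to the one-dimensional estimate
\[
\sup_{\theta}|\rho(\theta)-1| \;\leq\; \mathrm{TV}(\rho):=\int_0^{2\pi}|\rho'|\,d\theta.
\]

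The perimeter constraint $\int_0^{2\pi}\sqrt{\rho^2+\rho'^2}\,d\theta=2\pi$ combined with $\sqrt{\rho^2+\rho'^2}\geq\rho$ yields $\int\rho\,d\theta\leq 2\pi$, so $\inf\rho\leq 1$. If additionally $\sup\rho\geq 1$, the intermediate value theorem supplies $\theta_*\in\R/2\pi\Z$ with $\rho(\theta_*)=1$, and then for every $\theta$ the fundamental theorem of calculus gives $|\rho(\theta)-1|=|\rho(\theta)-\rho(\theta_*)|\leq \mathrm{TV}(\rho)$. This case is immediate.

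The main obstacle is the residual case $\sup\rho<1$, where no such $\theta_*$ exists and the one-dimensional argument breaks down. Setting $\delta:=1-\sup\rho>0$ and $\omega:=\sup\rho-\inf\rho$ one has $\sup_\theta|\rho-1|=1-\inf\rho=\delta+\omega$. The plan is to exhibit two complementary lower bounds on $\mathrm{TV}(\rho)$: the periodicity of $\rho$ (closed curve) forces $\mathrm{TV}(\rho)\geq 2\omega$ since $\rho$ must return to its starting value after reaching both $\sup\rho$ and $\inf\rho$; while feeding $\sqrt{\rho^2+\rho'^2}\leq \rho+|\rho'|$ into the perimeter identity yields $2\pi\leq 2\pi\sup\rho+\mathrm{TV}(\rho)$, i.e.\ $\mathrm{TV}(\rho)\geq 2\pi\delta$. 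Summing these two bounds, I obtain
\[
\delta+\omega \;\leq\; \Big(\tfrac{1}{2\pi}+\tfrac{1}{2}\Big)\mathrm{TV}(\rho) \;<\; \mathrm{TV}(\rho),
\]
since $1/(2\pi)+1/2<1$. This closes the argument and yields the desired inequality with constant~$1$.
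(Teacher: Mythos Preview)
Your proof is correct and follows essentially the same strategy as the paper's: reduce the normal-deviation integrand to a lower bound by the radial derivative, split into the cases $\sup\rho\geq 1$ and $\sup\rho<1$, and in the latter case combine an oscillation bound $\mathrm{TV}(\rho)\geq 2\omega$ with a perimeter bound $\mathrm{TV}(\rho)\geq 2\pi\delta$ to obtain the final constant $\tfrac12+\tfrac{1}{2\pi}<1$. The only difference is cosmetic: you work in the polar-angle parametrization $\theta\mapsto\rho(\theta)e^{i\theta}$, whereas the paper uses the arc-length parametrization $g(s)$ and phrases the same inequalities as $|\tau_{\partial\Omega}\cdot n_*|\leq |n_{\partial\Omega}-n_*|$ together with an auxiliary map $g_*=g/|g|$ onto the unit circle; the quantities $\mathrm{TV}(\rho)$ and $\int_{\partial\Omega}|\tau_{\partial\Omega}\cdot n_*|\,d\mathcal H^1$ coincide, and your estimate $\sqrt{\rho^2+\rho'^2}\leq\rho+|\rho'|$ plays the role of the paper's triangle-inequality step involving $g_*$. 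Your version is arguably a bit more direct.
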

\begin{proof}[Proof of Lemma~\ref{l:aux}]
We choose coordinates in which $x_*=0$ and denote
\begin{align*}
n_*(x)=\frac{x-x_*}{|x-x_*|}=\frac{x}{|x|}.
\end{align*}
First we claim that
\begin{align}\label{eq:taun}
\left|\tau_{\partial\Omega}(x)\cdot n_*(x)\right|
\leq
\left|n_{\partial\Omega}(x)-n_*(x)\right|.
\end{align}
To prove \eqref{eq:taun}, note that since $\Omega$ is strictly star-shaped around $x_*$,  i.e. $n_{\partial\Omega}\cdot n_*>0$ on $\partial\Omega$, we have
\begin{align*}
n_{\partial\Omega}\cdot n_* =\sqrt{1-(\tau_{\partial\Omega}\cdot n_*)^2}
\end{align*}
Hence we deduce
\begin{align*}
|n_{\partial\Omega}-n_*|^2=2-2\, n_{\partial\Omega}\cdot n_*
=2-2\sqrt{1-(\tau_{\partial\Omega}\cdot n_*)^2}.
\end{align*}
Estimate \eqref{eq:taun} follows from this identity and the convexity inequality
\begin{align*}
2-2\sqrt{1-t} \geq t\qquad\forall t\in [0,1].
\end{align*}
Let $\p\in C^1(\R/2\pi\Z;\R^2)$ denote a counterclockwise arc-length parametrization of $\partial\Omega$, and let $r_{\min}=\min |\p|$, $r_{\max}=\max |\p|$ be the respective radii of the maximal centered disk contained in $\Omega$ and of the minimal centered disk containing $\Omega$. As
\begin{align*}
\frac{d}{ds}|\p(s)|=\dot\p(s)\cdot \frac{\p(s)}{|\p(s)|}
=\tau_{\partial\Omega}(\p(s))\cdot n_*(\p(s)),
\end{align*}
we infer, using also \eqref{eq:taun},
\begin{align}\label{eq:oscrad}
r_{\max}-r_{\min} \leq \frac12 \int_{\partial\Omega} |\tau_{\partial\Omega}\cdot n_*|\, d\mathcal H^1\leq \frac12\int_{\partial\Omega} |n_{\partial\Omega}- n_*|\, d\mathcal H^1.
\end{align}
Note that $r_{\min}\leq 1$ thanks to the isoperimetric inequality, so if $r_{\max}\geq 1$ then
\eqref{eq:oscrad} directly implies the conclusion of Lemma~\ref{l:aux}. In what follows we may therefore assume $r_{\max} < 1$. As $\Omega$ is strictly star-shaped around $x_*=0$, the map
\begin{align*}
\p_*(s)=\frac{\p(s)}{|\p(s)|},\qquad s\in\R/2\pi\Z,
\end{align*}
defines a one-to-one parametrization of the unit circle $\partial D_1$. In particular we must have
\begin{align*}
\int_{\R/2\pi\Z} |\dot\p_*| =2\pi.
\end{align*}
On the other hand direct calculation shows
\begin{align*}
|\p| \, \dot\p_* =\dot\p - (\dot\p\cdot\p_*)\,\p_*,
\end{align*}
hence
\begin{align*}
\int_{\R/2\pi\Z}(1-|\p|)|\dot\p_*| 
&
=2\pi - \int_{\R/2\pi\Z} | \dot\p - (\dot\p\cdot\p_*)\,\p_* | \\
&\leq
2\pi - \int_{\R/2\pi\Z}|\dot\p| + \int_{\R/2\pi\Z} |\dot\p\cdot\p_*| 
\\
&
 =\int_{\partial\Omega} |\tau_{\partial\Omega}\cdot n_*|\, d\mathcal H^1.
\end{align*}
As $0 < 1-r_{\max} \leq 1-|g|$, this implies
\begin{align*}
2\pi(1-r_{\max})=(1-r_{\max})\int_{\R/2\pi\Z}|\dot\p_*| \leq
\int_{\partial\Omega} |\tau_{\partial\Omega}\cdot n_*|\, d\mathcal H^1.
\end{align*}
Together with \eqref{eq:oscrad} this gives
\begin{align*}
1-r_{\min} \leq \left(\frac12+\frac{1}{2\pi}\right)
\int_{\partial\Omega} |\tau_{\partial\Omega}\cdot n_*|\, d\mathcal H^1
\leq
\int_{\partial\Omega} |n_{\partial\Omega}- n_*|\, d\mathcal H^1,
\end{align*}
proving Lemma~\ref{l:aux} also in the case  $r_{\max}<1$.
\end{proof}

Now we turn to the proof of Proposition~\ref{p:est_a_circle}.

\begin{proof}[Proof of Proposition~\ref{p:est_a_circle}]
We choose coordinates in which $x_0=0$.
We assume $\Omega=E(\eta_0)$ for some $\eta_0=\e_0^2/K>0$, with $\e_0$ to be fixed later. Hence $\Omega$ is star-shaped around $x_0$ and $B_R(x_0)\subset\Omega\subset B_{(1+\eta_0)R}$.

Recall $R\leq 1$ by the isoperimetric inequality, and thanks to Lemmas~\ref{l:geom1} and \ref{l:aux} we have 
\begin{align}\label{eq:Rcloseto1}
\sup_{x\in\partial\Omega} \left|n_{\partial\Omega}(x)-\frac{x }{|x|}\right|  \lesssim \e_0,
\qquad
0\leq 1-R \lesssim \e_0.
\end{align}
For any $\alpha\in\R/2\pi\Z$ we denote by $I_\alpha$ the portion of $\partial\Omega$ that intersects the centered cone corresponding to angles from $\alpha$ to $\alpha +\pi/6$, that is
\begin{align*}
I_\alpha =\left\lbrace x\in\partial\Omega\colon \frac{x}{|x|}=e^{i\theta}\text{ for some }\theta\in [\alpha,\alpha +\pi/6]\right\rbrace. 
\end{align*}
Thanks to the above we have $\mathcal H^1(I_\alpha)=\pi/6 +\mathcal O(\e_0)\geq \pi/12$ for small enough $\eta_0$, so by Fubini there exist $\bar x_2\in I_{\alpha +2\pi/3}$, $\bar x_3\in I_{\alpha +4\pi/3}$ such that
\begin{align*}
\int_{I_\alpha} a^2(x_1,\bar x_2,\bar x_3)\,d\mathcal H^1(x_1)
\lesssim
\int_{\partial\Omega^3}a^2 \, d(\mathcal H^1)^{\otimes 3}.
\end{align*}
Denote by $z_\alpha$ the intersection of the two normal lines at $\bar x_2$ and $\bar x_3$. It satisfies $|z_\alpha| \lesssim \e_0$. Let $n_\alpha$ denote the vortex centered at $z_\alpha$, that is,
\begin{align*}
n_\alpha(x)=\frac{x-z_\alpha}{|x-z_\alpha|}.
\end{align*}
We claim that
\begin{align}\label{eq:aboundsnormal}
\left|n_{\partial\Omega}(x_1)-n_\alpha(x_1)\right|
\lesssim a(x_1,\bar x_2,\bar x_3)\qquad\forall x_1\in I_\alpha.
\end{align}
Let indeed $x_1\in I_\alpha$. 
We denote by $L_1,L_2,L_3$ the normal lines to $\partial\Omega$ at $x_1,\bar x_2,\bar x_3$. 
As the normals are close to radial thanks to \eqref{eq:Rcloseto1}, the three intersection points $L_1\cap L_2$, $L_1\cap L_3$ and $L_2\cap L_3$ lie in $D_{c_0\e_0}$ for some absolute constant $c_0$. 
Recall that $z_\alpha$ is the intersection point $z_\alpha =L_2\cap L_3$, and denote by $d$ its distance to the line $L_1$, $d=\dist(z_\alpha,L_1)\lesssim \e_0$. 
Since $x_1\in I_\alpha$, $\bar x_2\in I_{\alpha +2\pi/3}$ and $\bar x_3\in I_{\alpha+4\pi/3}$, the triangle formed inside $D_{c_0\e_0}$ by the three lines $L_1,L_2,L_3$ has its three angles $\gtrsim 1$ for small enough $\e_0$. 
Hence the radius $r$ of that triangle's incircle is comparable to the distance $d$, we have $d\lesssim r$. 
On the other hand, considering the three concurrent lines from $x_1,\bar x_2,\bar x_3$ to the incircle's center, we find that $r\lesssim a(x_1,\bar x_2,\bar x_3)$. 
Thus we have $d\lesssim r\lesssim a(x_1,\bar x_2,\bar x_3)$. Moreover, the angle between $L_1$ and the line from $z_\alpha$ to $x_1$ is $\lesssim d$, which shows that 
\begin{align*}
\left|n_{\partial\Omega}(x_1)-n_\alpha(x_1)\right|
\lesssim d\lesssim r\lesssim a(x_1,\bar x_2,\bar x_3),
\end{align*}
and proves the claim \eqref{eq:aboundsnormal}.
From \eqref{eq:aboundsnormal} we deduce
\begin{align*}
\int_{I_\alpha} \left| n_{\partial\Omega}-n_\alpha\right|^2 d\mathcal H^1
\lesssim
\int_{I_\alpha} a^2(\cdot,\bar x_2,\bar x_3)\,d\mathcal H^1 
\lesssim
\int_{\partial\Omega^3}a^2 \, d(\mathcal H^1)^{\otimes 3}.
\end{align*}
Applying this to $\alpha=j\pi/12$ for $j=1,\ldots,24$, we cover $\partial\Omega$ with portions $I_{1},\ldots,I_{24}$ satisfying $\mathcal H^1(I_{j}\cap I_{j+1})=\pi/12 +\mathcal O(\e_0)$, and  find points $z_j\in D_{c_0\e_0}$  
such that
\begin{align}\label{eq:nIj}
\int_{I_j} \left| n_{\partial\Omega}-n_j\right|^2 d\mathcal H^1
\lesssim
\int_{\partial\Omega^3}a^2 \, d(\mathcal H^1)^{\otimes 3},\qquad 
n_j(x)=\frac{x-z_j}{|x-z_j|}.
\end{align}
This implies
\begin{align*}
\int_{I_j\cap I_{j+1}}|n_j-n_{j+1}|^2 d\mathcal H^1
\lesssim \int_{\partial\Omega^3}a^2 \, d(\mathcal H^1)^{\otimes 3}.
\end{align*}
We claim that
\begin{align}\label{eq:boundzj}
|z_j -z_{j+1}|^2 \lesssim 
\int_{I_j\cap I_{j+1}}|n_j-n_{j+1}|^2 d\mathcal H^1 \lesssim \int_{\partial\Omega^3}a^2 \, d(\mathcal H^1)^{\otimes 3}.
\end{align}
The second inequality was proved above, so we only need to show the first inequality in \eqref{eq:boundzj}. First note that since $z_j,z_{j+1}\in D_{c_0\e_0}$ and $I_j\cap I_{j+1}\subset D_{1+c_0\e_0}\setminus D_{1-c_0\e_0}$, for any $x\in I_j\cap I_{j+1}$ we have
\begin{align*}
|(z_{j+1}-z_j)\cdot (in_j(x))| \lesssim |n_j(x)-n_{j+1}(x)|.
\end{align*}
In other words, $|n_j(x)-n_{j+1}(x)|$ controls $|z_j-z_{j+1}|$ unless $x,z_j,z_{j+1}$ are closed to aligned. But since $I_{j}\cap I_{j+1}$ is a portion of curve inside $ D_{1+c_0\e_0}\setminus D_{1-c_0\e_0}$ from a point of polar angle $(j+1)\pi/12$ to a point of polar angle $(j+2)\pi/12$, there is a subset $J\subset I_{j}\cap I_{j+1}$ satisfying $\mathcal H^1(J)\geq \pi/24$ and such that for $x\in J$ the three points $x,z_j,z_{j+1}$ are far from aligned, that is,
\begin{align*}
|z_j-z_{j+1}|\lesssim |(z_{j+1}-z_j)\cdot (in_j(x))| \lesssim |n_j(x)-n_{j+1}(x)|\qquad\forall x\in J.
\end{align*}
Taking squares and integrating over $J$ we obtain \eqref{eq:boundzj}.
From \eqref{eq:boundzj}
we deduce $|z_j-z_1|^2\lesssim  \int_{\partial\Omega^3}a^2 \, d(\mathcal H^1)^{\otimes 3}$ for all $j=1,\ldots,24$, and therefore \eqref{eq:nIj} implies
\begin{align*}
\int_{\partial\Omega} |n_{\partial\Omega}-n_1|^2 \lesssim 
 \int_{\partial\Omega^3}a^2 \, d(\mathcal H^1)^{\otimes 3}.
\end{align*}
Taking $x_*=z_1$ and applying Lemma~\ref{l:aux} and Cauchy-Schwarz' inequality, this concludes the proof of Proposition~\ref{p:est_a_circle}.
\end{proof}

\section{Lower bound on the dissipation}\label{s:dissip}

In this section we prove the following.

\begin{prop}\label{p:adissiptrace}
Let $\Omega$ and $m$ be as in Theorem \ref{t:main2}. We have the estimate
\begin{align*}
\int_{E(\eta_*)^3} a^2 d(\mathcal H^1)^{\otimes 3} \leq C\,  \nu(\Omega) + C\,  \mathcal H^1(\lbrace m_{\partial\Omega}=\tau\rbrace),
\end{align*}
where $\eta_*=\min(\eta_0/2,1/(8K))$, for $\eta_0$ as in Lemma~\ref{l:segment}, and $C>0$ is a constant depending only on $K$.
\end{prop}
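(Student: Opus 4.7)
The strategy is to exploit the Lagrangian representation of $\sigma_{\min}$ introduced in \cite{marconi21ellipse,marconi22structure} to distribute the dissipation $\nu(\Omega)=|\sigma_{\min}|(\Omega\times\R/2\pi\Z)$ along characteristic trajectories, and then to make the basic three-line obstruction sketched in the introduction quantitative. Concretely, there is a measure $\eta$ on the space $\Gamma$ of characteristic curves $\gamma$ in $\Omega\times\R/2\pi\Z$ together with non-negative measures $\mu_\gamma$ supported on $\gamma$ such that $|\sigma_{\min}|=\int_\Gamma \mu_\gamma\,d\eta(\gamma)$, and in particular $\nu(\Omega)=\int_\Gamma \|\mu_\gamma\|\,d\eta(\gamma)$. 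A trajectory with $\mu_\gamma=0$ projects to a straight segment along which $\mathbf 1_{m\cdot e^{is}>0}$ is constant, and $\|\mu_\gamma\|$ quantifies the deviation of $\gamma$ from such a segment.

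The first step is the pointwise obstruction. Fix $\hat x=(x_1,x_2,x_3)\in E(\eta_*)^3$ with $a(\hat x)>0$ and let $\alpha_k$, $z_0\in B_{R/2}(x_0)$ be as in Definition~\ref{def:a}. Since $\eta_*\le \eta_0$, Lemma~\ref{l:segment} guarantees that each segment $[z_0,x_k]$ lies in $\Omega\cup\{x_k\}$, so one may consider Lagrangian trajectories entering or leaving $\partial\Omega$ at $x_k$ with direction close to $e^{i\alpha_k}$ and passing near $z_0$. Property (2) of Definition~\ref{def:a} ensures that, as long as $m(x_k)=-\tau(x_k)$, the indicator $\mathbf 1_{m(x_k)\cdot e^{i\alpha_k}>0}$ equals $1$; the kinetic formulation \eqref{eq:kin} then forces this indicator to equal $1$ at $z_0$ \emph{up to dissipation along} $[z_0,x_k]$. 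Property~(3) forbids the three indicators to all equal $1$ simultaneously at $z_0$, with margin $a$. Therefore at least one of the three segments must carry dissipation of order $a$, or at least one endpoint $x_k$ must instead satisfy $m(x_k)=+\tau(x_k)$.

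The second step is the quadratic upgrade. Thickening each $\alpha_k$ into a direction interval of length $\sim a$, I obtain, for each fixed triple $\hat x$ with $a(\hat x)>0$, a two-parameter packet of trajectory triples (one direction per $x_k$, modulo a one-dimensional constraint coming from the concurrency through $z_0$), each contributing dissipation of order $a$, hence a total $\gtrsim a^2$ per triple. The final estimate is then obtained by a Fubini-type argument: I disintegrate $\eta$ with respect to the boundary entry/exit point and direction, use the $C^{1,1}$ hypothesis on $\partial\Omega$ (via Lemma~\ref{l:geom1} and the bound $K$) to control the Jacobian between boundary arclength and the induced measures, and integrate in $(x_1,x_2,x_3)$ over $E(\eta_*)^3$. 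Triples where all three trace conditions $m(x_k)=-\tau(x_k)$ hold yield the bound $C\nu(\Omega)$; triples where at least one $x_k\in\{m|_{\partial\Omega}=\tau\}$ contribute (using $a\le 1$ and $\mathcal H^1(\partial\Omega)=2\pi$) at most $C\,\mathcal H^1(\{m|_{\partial\Omega}=\tau\})$ after integrating out the other two coordinates.

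The main obstacle is the rigorous handling of the Lagrangian packets in Step~2: one needs a genuine lower bound, not just an upper bound, on the $\eta$-mass of trajectories entering $\partial\Omega$ at $x_k$ with directions in an interval of size $a$, together with an essentially Lebesgue character of this disintegration in the direction variable near the compatible directions. Without this, one would lose a factor of $a$ and fail to recover the sharp $a^2$-estimate. Technically this is where the analysis of \cite{marconi21ellipse,marconi22structure} is used, and where the dependence of the constant on $K$ enters, both through the curvature of $\partial\Omega$ and through the lower bound $R\ge 1/K$ that guarantees the segments $[z_0,x_k]$ have non-trivial length.
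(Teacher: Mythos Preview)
Your outline matches the paper's strategy at the level of ideas: Lagrangian representation, three-line obstruction at $z_0$, thickening of directions by $\sim a$ to extract dissipation of order $a^2$, and a final summation. Two technical points, however, are genuine gaps as stated.

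\medskip
\noindent\textbf{Trace handling.} Your dichotomy ``all three $m(x_k)=-\tau(x_k)$'' versus ``some $x_k\in\{m=\tau\}$'' is not sufficient once you thicken. After enlarging the directions to intervals of size $\sim a$, the boundary entry points of the relevant trajectories vary over arcs $I_k\subset\partial\Omega$ of length $\sim a$ around $x_k$, not just $x_k$ itself. The lower bound on the packet mass (the ``genuine lower bound'' you flag) only survives if the fraction of wrong trace $\{m=\tau\}$ on each $I_k$ is uniformly small, say $<\varepsilon$. The paper enforces this by restricting to triples in $G_\varepsilon^3$, where $G_\varepsilon=\{\mathcal M\mathbf 1_{\{m=\tau\}}<\varepsilon\}$ is defined through the Hardy--Littlewood maximal function, and then bounds the contribution of $\partial\Omega\setminus G_\varepsilon$ by $\varepsilon^{-1}\mathcal H^1(\{m=\tau\})$. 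Without this device, a single $x_k$ with $m(x_k)=-\tau(x_k)$ may sit in an arc where $m=\tau$ on a set of positive density, and your packet lower bound fails.

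\medskip
\noindent\textbf{Summation.} Your ``Fubini-type argument'' does not explain how to avoid over-counting: the dissipation carried by a fixed trajectory can be claimed by many triples $\hat x$, so integrating the pointwise bound $a(\hat x)^2\lesssim(\text{local dissipation})$ over all $\hat x$ does not directly yield $\int a^2\lesssim\nu(\Omega)$. The paper localizes instead: it proves that on a cube $I(\hat x,a(\hat x))\subset\partial\Omega^3$ of side $\sim a(\hat x)$ one has $\int_{I_1}h+\int_{I_2}h+\int_{I_3}h\gtrsim a(\hat x)^3$, where $h\in L^1(\partial\Omega)$ encodes the dissipation of curves emanating from each boundary point. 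Since $a$ is Lipschitz (Lemma~\ref{l:a_Lip}), $\int_{I(\hat x,a)}a^2\lesssim a^5$, and a Besicovitch covering with finite overlap turns this into $\int a^2\lesssim\int_{\partial\Omega}h\lesssim\nu(\Omega)$. This covering step is what replaces your Fubini argument and is essential.
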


\subsection{Lagrangian representation}\label{ss:lag}

In order to prove Proposition  \ref{p:adissiptrace}, we introduce the notion of Lagrangian representation for entropy solutions of the eikonal equation from \cite{marconi21ellipse}.

Given $T>0$ we let
\begin{align*}
\Gamma= \Big\{ (\gamma,t^-_\gamma,t^+_\gamma)\colon
& 0\le t^-_\gamma\le t^+_\gamma\le T, \\
&
\gamma=(\gamma_x,\gamma_s)\in \BV((t^-_\gamma,t^+_\gamma);\Omega \times \R/2\pi \Z),
 \gamma_x \mbox{ is Lipschitz} \Big\}.
\end{align*}
We will always consider the right-continuous representative of the component $\gamma_s$ and we will write $\gamma(t^-_\gamma)$ instead of $ \lim_{t\to t^-_\gamma} \gamma(t)$ and  $\gamma(t^+_\gamma)$ instead of $ \lim_{t\to t^+_\gamma} \gamma(t)$.
For every $t \in (0,T)$ we consider the section 
\begin{equation*}
 \Gamma(t):= \left\{\left(\gamma,t^-_\gamma,t^+_\gamma\right)\in  \Gamma: t \in \left(t^-_\gamma,t^+_\gamma\right)\right\}
\end{equation*}
and we denote by 
\begin{align*}
\begin{array}{crl}
 e_t\colon &
  \Gamma(t) &\longrightarrow \Omega \times \R/2\pi \Z \\
&(\gamma, t^-_\gamma, t^+_\gamma)  &\longmapsto  \gamma(t),
\end{array}
\end{align*}
the evaluation map at time $t$.

\begin{definition}\label{D_Lagr}
Let $\Omega$ be a $C^{1,1}$ open set and $m$ solving \eqref{eq:eik} and \eqref{eq:kin}.
We say that a finite non-negative Radon measure $\omega \in \mathcal M( \Gamma)$ is a \emph{Lagrangian representation} of $m$ if the following conditions are satisfied:
\begin{enumerate}
\item for every $t\in (0,T)$ we have
\begin{equation}\label{E_repr_formula}
( e_t)_\sharp \left[ \omega \llcorner  \Gamma(t)\right]= \mathbf 1_{E_m}\, \L^{2}\times \L^1,
\end{equation}
where $E_m\subset\Omega\times\R/2\pi\Z$ is the `epigraph'
\begin{align*}
E_m=\left\lbrace (x,s)\in\Omega\times\R/2\pi\Z\colon m(x)\cdot e^{is}> 0\right\rbrace;
\end{align*}
\item the measure $\omega$ is concentrated on curves $(\gamma,t^-_\gamma,t^+_\gamma)\in  \Gamma$ solving the characteristic equation:
\begin{equation}\label{e:charac}
\dot\gamma_x(t)= e^{i \gamma_s(t)}\qquad\text{for a.e. }t\in (t^-_\gamma,t^+_\gamma);
\end{equation}
\item we have the integral bound
\begin{equation*}\label{E_reg}
\int_{ \Gamma} \TV_{(0,T)} \gamma_s d\omega(\gamma) <\infty;
\end{equation*}
\item for $\omega$-a.e. $(\gamma,t^-_\gamma,t^+_\gamma)\in \Gamma$ we have
\begin{equation}\label{e:extreme_time}
t^-_\gamma>0 \Rightarrow  \gamma_x(t^-_\gamma ) \in \partial \Omega, \qquad \mbox{and} \qquad 
t^+_\gamma<T \Rightarrow  \gamma_x(t^+_\gamma) \in \partial \Omega.
\end{equation}
\end{enumerate}
\end{definition}

A useful property of the Lagrangian formulation is the possibility of decomposing the entropy dissipation measure $\nu$ along the characteristics detected by $\omega$.
More precisely, from \cite{marconi21ellipse} we have

\begin{prop}\label{p:lagrangiandissipation}
Let $\Omega$ be a $C^{1,1}$ open set, $m$ solving \eqref{eq:eik} and \eqref{eq:kin}, and $T>0$. Then there is a Lagrangian representation $\omega$ of $m$ such that for every Borel set $A\subset [0,T]$ and $B\subset \Omega$ it holds
\[
\int_\Gamma \mu_\gamma (\{ t \in A : \gamma_x(t) \in B\})\, d\omega(\gamma) = \L^1(A) \nu(B),
\]
where $\mu_\gamma =|D_t\gamma_s|$.
\end{prop}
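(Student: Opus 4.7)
My plan is to construct $\omega$ via a superposition-type argument tailored to the kinetic equation \eqref{eq:kin}, and then identify the dissipation measure $\nu$ with the push-forward of $|D_t\gamma_s|$ integrated against $\omega$.

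First, I would view \eqref{eq:kin} as a transport--jump equation on the extended space $\Omega\times\R/2\pi\Z$: the indicator $\mathbf 1_{E_m}$ is transported in the $x$-variable by the velocity $e^{is}$, while its $s$-dependence varies only through the jumps encoded by $\sigma_{\min}$. This is reminiscent of Ambrosio's superposition principle for continuity equations (and of Smirnov's decomposition for solenoidal measures), but here the characteristic field is discontinuous on $\Omega$, so those results do not apply directly. I would therefore proceed by approximation: mollify $m$ (or introduce a vanishing parabolic regularization in $s$) so that $\sigma_{\min}$ is replaced by a smooth source, for which the ODE flow is well defined and yields an $\omega_\e$ satisfying \eqref{E_repr_formula} and concentrated on smooth characteristics. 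The uniform bound
\[
\int_\Gamma \TV_{(0,T)}\gamma_s\, d\omega_\e \lesssim T\,|\sigma_{\min}|(\Omega\times \R/2\pi\Z)
\]
would provide tightness in $\Gamma$ endowed with a suitable BV-type topology, and Prokhorov would furnish a limit $\omega$ verifying conditions (1)--(3) of Definition~\ref{D_Lagr}. The boundary condition \eqref{e:extreme_time} would then be forced by the push-forward identity: a trajectory that terminated or originated inside $\Omega\times(0,T)$ would create or destroy Lebesgue mass and contradict the time-independence of \eqref{E_repr_formula}.

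For the dissipation decomposition, I would pair the kinetic equation with test functions of the form $\varphi(x,s)\psi(t)$ and compare two expressions for $\sigma_{\min}$: the Eulerian one, coming directly from \eqref{eq:kin}, and the Lagrangian one, obtained by integrating the jumps of $\gamma_s$ along $\omega$ and using \eqref{E_repr_formula} to evaluate trajectories at time $t$. Fubini on $\Gamma\times[0,T]$ then yields the inequality
\[
\int_\Gamma \mu_\gamma\bigl(\{t\in A: \gamma_x(t)\in B\}\bigr)\, d\omega(\gamma)\leq \mathcal L^1(A)\,\nu(B).
\]
The factor $\mathcal L^1(A)$ reflects the time-homogeneity of \eqref{E_repr_formula}, while $\nu(B)=|\sigma_{\min}|(B\times\R/2\pi\Z)$ records the total jump mass over the slab $B\times\R/2\pi\Z$.

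The main obstacle will be upgrading the inequality above to the \emph{equality} required by the statement. This demands that $\omega$ saturate the dissipation budget of $\sigma_{\min}$, with no cancellations between trajectories having angular derivatives of opposite sign at the same $(x,s)$. Here the \emph{minimality} and uniqueness of $\sigma_{\min}$ become essential: any Lagrangian representation whose jump measure were strictly dominated by $\sigma_{\min}$ would provide a competitor of smaller total variation, contradicting minimality. This saturation step, together with the BV trace theory needed to make rigorous sense of \eqref{e:extreme_time}, is the delicate heart of the construction, and is the point where one has to invoke the full machinery of \cite{marconi21ellipse} rather than simply quote a soft disintegration result.
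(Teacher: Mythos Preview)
The paper does not actually prove this proposition: it is stated immediately after the sentence ``More precisely, from \cite{marconi21ellipse} we have'', and is simply quoted from that reference without any argument. So there is no ``paper's own proof'' to compare against; the construction of the Lagrangian representation with the dissipation identity is entirely outsourced.

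Your outline is a reasonable high-level description of the strategy in \cite{marconi21ellipse,marconi22structure}: approximation to obtain smooth flows, tightness via the uniform $\TV$ bound, passage to the limit, and identification of the dissipation. Two comments on the sketch itself. First, your argument for the \emph{equality} is not quite right as stated: you claim that a Lagrangian representation with strictly smaller jump mass would give a competitor to $\sigma_{\min}$, but the delicate point is the opposite direction --- ensuring the approximation procedure does not \emph{create} spurious oscillation in $\gamma_s$ (which would make the left-hand side too large, not too small). The actual construction in \cite{marconi21ellipse} builds $\omega$ so that the identity holds by design at the approximate level and survives the limit, rather than deducing equality a posteriori from minimality. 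Second, the boundary condition \eqref{e:extreme_time} is more subtle than a mass-balance argument: one needs to rule out curves that linger on $\partial\Omega$ for a positive time or that oscillate in and out, and this uses the $C^{1,1}$ regularity of $\partial\Omega$ together with the characteristic speed constraint. Your final sentence correctly identifies that the full machinery of the cited paper is required; for the purposes of the present article, that is exactly what the authors do --- they invoke it as a black box.
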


\begin{rem}\label{r:mugamma}
Note that $\gamma_s$ takes values into $\R/2\pi\Z$, 
so a few precisions about the meaning of the measure $\mu_\gamma =|D_t\gamma_s|$ are in order.
It should actually be understood as the measure $|D_t\hat\gamma_s|$ where $\hat\gamma_s\in BV(I_\gamma;\R)$ is such that $\gamma_s(t)=\hat\gamma_s(t)+2\pi\Z$ for all $t\in I_\gamma$, and the jumps of $\hat\gamma_s$ are such that $|\hat\gamma_s(t+)-\hat\gamma_s(t-)|=\dist_{\R/2\pi\Z}(\gamma_s(t-),\gamma_s(t+))$ (see e.g. \cite[Theorem~1]{ignat05} for the existence of such a lifting, which is however not necessary to define the measure $\mu_\gamma$). 
\end{rem}

We will also use that, thanks to property \eqref{e:extreme_time} and the trace properties of $m$, the pushforward of $\omega$ under evaluation at initial time $t_\gamma^-$ is related to the $\mathcal H^1_{\lfloor\partial\Omega}$ in the following way.
\begin{lem}\label{l:projinit}
Denote $\Gamma_{ini}=\lbrace t_\gamma^->0\rbrace\subset\Gamma$ and 
\begin{align*}
P_{ini}\colon \Gamma_{ini}\to (0,T)\times \partial\Omega\times\R/2\pi\Z,\quad \gamma\mapsto (t_\gamma^-,\gamma(t_\gamma^-)),
\end{align*}
then the pushforward measure $\mu_{ini}=P_{ini}\sharp \omega_{\lfloor\Gamma_{ini}}$ is given by
\begin{align*}
d\mu_{ini}(t,x,s) 
&=\mathbf 1_{m (x)\cdot e^{is}>0} \mathbf 1_{i\tau(x)\cdot e^{is}>0} \,( i\tau(x)\cdot e^{is}) \, dt \, d\mathcal H^1_{\lfloor\partial\Omega}(x) \, ds.
\end{align*}
\end{lem}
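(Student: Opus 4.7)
\medskip

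\noindent\textbf{Proof proposal for Lemma~\ref{l:projinit}.} The plan is to test the representation formula and the characteristic equation against the same family of test functions and compute the resulting quantity in two ways, so that matching boundary contributions identifies the pushforward $\mu_{ini}$.

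Concretely, for $\varphi\in C^1_c((0,T)\times\R^2\times\R/2\pi\Z)$ I would study
\begin{equation*}
A(\varphi)=\int_\Gamma \int_{t_\gamma^-}^{t_\gamma^+}\bigl(\partial_t\varphi+e^{i\gamma_s(t)}\cdot\nabla_x\varphi\bigr)(t,\gamma_x(t),\gamma_s(t))\,dt\,d\omega(\gamma).
\end{equation*}
The first computation is \emph{Eulerian}: using Fubini and \eqref{E_repr_formula}, $A(\varphi)$ equals the integral of $(\partial_t+e^{is}\cdot\nabla_x)\varphi$ over $(0,T)\times E_m$. Since $E_m$ is $t$-independent and $\varphi$ is compactly supported in time, the $\partial_t$ contribution vanishes. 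Integrating by parts in $x$ against the characteristic of the domain, using the kinetic equation~\eqref{eq:kin} inside $\Omega$ and the existence of strong traces of $\mathbf 1_{m\cdot e^{is}>0}$ on $\partial\Omega$ (granted by the kinetic formulation as recalled after~\eqref{eq:kin}), I would get
\begin{equation*}
A(\varphi)=\int_0^T\!\int_{\Omega\times\R/2\pi\Z}\partial_s\varphi\,d\sigma\,dt+\int_0^T\!\int_{\partial\Omega}\!\int_{\R/2\pi\Z}\varphi\,\mathbf 1_{m(x)\cdot e^{is}>0}\,\bigl(n_{\partial\Omega}(x)\cdot e^{is}\bigr)\,ds\,d\mathcal H^1(x)\,dt.
\end{equation*}

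The second computation is \emph{Lagrangian}: using $\dot\gamma_x=e^{i\gamma_s}$ from \eqref{e:charac}, the integrand along each trajectory is the total $t$-derivative of $\varphi(t,\gamma_x(t),\gamma_s(t))$ minus the $s$-contribution coming from $d\gamma_s$ (understood through the BV chain rule as in Remark~\ref{r:mugamma}). Integration in $t$ and then against $\omega$, together with property~\eqref{e:extreme_time} and the fact that $\varphi$ vanishes near $t=0,T$, gives
\begin{equation*}
A(\varphi)=\int_\Gamma\!\bigl[\varphi(t_\gamma^+,\gamma(t_\gamma^+))\mathbf 1_{t_\gamma^+<T}-\varphi(t_\gamma^-,\gamma(t_\gamma^-))\mathbf 1_{t_\gamma^->0}\bigr]\,d\omega-\int_\Gamma\!\int\partial_s\varphi\,d\gamma_s\,d\omega.
\end{equation*}
Restricting to $\varphi$ independent of $s$ kills the $\partial_s\varphi$ terms on both sides, and then localizing the remaining test function in a tubular neighborhood of $\partial\Omega$ isolates the boundary contributions. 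The sign of $n_{\partial\Omega}(x)\cdot e^{is}$ naturally separates the Eulerian boundary term into an outflow part ($n_{\partial\Omega}\cdot e^{is}>0$) and an inflow part ($n_{\partial\Omega}\cdot e^{is}<0$); matching these against the Lagrangian boundary terms at $t_\gamma^+<T$ and $t_\gamma^->0$ respectively, and using $n_{\partial\Omega}=-i\tau$ so that $-n_{\partial\Omega}\cdot e^{is}=i\tau\cdot e^{is}$, yields the announced formula for $(P_{ini})_\sharp\omega_{\lfloor\Gamma_{ini}}$.

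The main obstacle I expect is the rigorous justification of the Lagrangian chain rule when $\gamma_s$ is only BV (with jumps and a possibly nontrivial Cantor part), together with the compatibility between the traces of $\mathbf 1_{m\cdot e^{is}>0}$ on $\partial\Omega$ seen from inside $\Omega$ and the evaluation of $\gamma_s(t_\gamma^\pm)$ as $\gamma_x$ approaches $\partial\Omega$. These are handled by Vol'pert's BV chain rule (which justifies the integration by parts along trajectories with both the absolutely continuous and jump parts of $d\gamma_s$ matching $\partial_s\varphi$) and by the standard $L^1$-trace theory for kinetic solutions referenced in the introduction.
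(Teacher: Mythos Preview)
Your strategy of computing $A(\varphi)$ in two ways is sound in spirit, and it is close to what the paper does, but there is a genuine gap in the execution. You restrict to test functions $\varphi$ that are independent of $s$ in order to kill the $\partial_s\varphi$ terms on both sides. Once you do that, however, you can only identify the $(t,x)$-marginal of $\mu_{ini}$, not the full measure on $(0,T)\times\partial\Omega\times\R/2\pi\Z$. In particular, your subsequent step of ``separating the Eulerian boundary term into an outflow part and an inflow part according to the sign of $n_{\partial\Omega}\cdot e^{is}$'' and matching these to the endpoints $t_\gamma^+$ and $t_\gamma^-$ separately is exactly what requires $s$-dependent test functions: with $\varphi=\varphi(t,x)$ the $s$-integral on the Eulerian side just collapses, and on the Lagrangian side you only see $\varphi(t_\gamma^\pm,\gamma_x(t_\gamma^\pm))$ with no $\gamma_s$-information. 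To perform that separation you would have to reinstate the $s$-dependence, and then you owe an argument that the two $\partial_s\varphi$ terms (the one coming from $\partial_s\sigma$ on the Eulerian side and the one coming from the BV chain rule along $\gamma_s$ on the Lagrangian side) actually match. That matching is essentially a Lagrangian representation of the signed measure $\sigma$ itself, which is not stated in this paper and is not obvious from Proposition~\ref{p:lagrangiandissipation} (which only concerns $\nu=|\sigma_{\min}|$).

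The paper avoids this difficulty altogether by a different device: instead of differentiating the full test function along trajectories, it pairs $F(t,x,s)$ against $e^{is}\cdot\nabla G_h(x)$ for a spatial cutoff $G_h$ approximating $\mathbf 1_\Omega$. Because $G_h$ depends only on $x$, the characteristic equation gives $e^{i\gamma_s}\cdot\nabla G_h(\gamma_x)=\tfrac{d}{dt}G_h(\gamma_x)$ with no $\gamma_s$-derivative appearing, so one can integrate by parts in $t$ while keeping $F$ fully $s$-dependent. After passing to the limit $h\to 0$ this yields the endpoint contributions tested against any $F(t,x,s)$; then multiplying by a smooth cutoff $\phi_\e(x,s)$ supported in $\{i\tau(x)\cdot e^{is}>0\}$ isolates the inflow and kills the $t_\gamma^+$ term (since exiting curves satisfy $i\tau\cdot e^{i\gamma_s(t_\gamma^+)}\le 0$, with the tangential set $\omega$-negligible). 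If you want to salvage your approach, the cleanest fix is to import exactly this trick: put the spatial derivative on an $x$-only cutoff rather than on $\varphi$, so that the $s$-dependence of the test function survives without generating $\partial_s$ terms.
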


\begin{proof}[Proof of Lemma~\ref{l:projinit}]
The argument is similar to \cite[Lemma~3.1]{CHLM22}.
Let $F\in C^1_c((0,T)\times \partial\Omega\times\R/2\pi\Z)$, and denote also by $F$ a $C^1$ extension to $(0,T)\times \overline\Omega\times\R/2\pi\Z$. 

For small enough $h>0$ we may find a $C/h$-Lipschitz function $G_h\colon\R^2\to [0,1]$, with $C$ depending only on $K$, such that
\begin{align*}
&\mathbf 1_{x\in\Omega,\, \dist(x,\partial\Omega)\leq h} \leq G_h(x) \leq \mathbf 1_{x\in\Omega},\\
\text{and }&\nabla G_h \to i\tau \, d\mathcal H^1_{\lfloor \partial\Omega}\qquad\text{as }h\to 0.
\end{align*}
Thanks to the trace property of $m$ and the Lagrangian property \eqref{E_repr_formula} we have
\begin{align*}
&\int_{(0,T)\times \partial\Omega\times\R/2\pi\Z}
 F(t,x,s) \,\mathbf 1_{m (x)\cdot e^{is}>0} \, 
( i\tau(x)\cdot e^{is}) \, 
dt \, d\mathcal H^1_{\lfloor\partial\Omega}(x) \, ds \\
&=\lim_{h\to 0^+}
\int_{(0,T)\times \Omega\times\R/2\pi\Z} F(t,x,s) \,
\mathbf 1_{m(x)\cdot e^{is}>0} \,
(e^{is}\cdot\nabla G_h(x))
\, dt \, dx \, ds \\
&=\lim_{h\to 0^+}
\int_{\Gamma}
 A_h(\gamma) \, d\omega(\gamma), \\
\end{align*}
where
\begin{align*}
A_h(\gamma)&=\int_{t_\gamma^-}^{t_\gamma^+}  F(t,\gamma_x(t),\gamma_s(t)) \,
 \, ( e^{i\gamma_s(t)} \cdot \nabla G_h(\gamma_x(t))
 \, dt \\
 &=\int_{t_\gamma^-}^{t_\gamma^+}  F(t,\gamma_x(t),\gamma_s(t)) \,
 \, \frac{d}{dt}\left[ G_h(\gamma_x(t))\right]
 \, dt.
\end{align*}
For the last equality we used the characteristic equation \eqref{e:charac}. Then we integrate by parts: since $G_h(\gamma_x(t_\gamma^-))=0$ if $t_\gamma^->0$ and $G_h(t_\gamma^+)=0$ if $t_\gamma^+<T$ we obtain 
\begin{align*}
A_h(\gamma)&=-\int_{t_\gamma^-}^{t_\gamma^+}  G_h(\gamma_x(t)) D\Phi_\gamma(dt),\\
\text{where }\Phi_\gamma(t)&=F(t,\gamma_x(t),\gamma_s(t)).
\end{align*}
In particular we have  the convergence 
\begin{align*}
A_h(\gamma)\longrightarrow A_0(\gamma)=\int_{t_\gamma^-}^{t_\gamma^+} \mathbf 1_{\gamma_x(t)\in\Omega} D\Phi_\gamma(dt),
\end{align*}
as $h\to 0^+$.  By definition of the Lagrangian representation, for $\omega$-a.e. $\gamma\in\Gamma$ we have $\gamma_x(t)\in\Omega$ for all $t\in (t_\gamma^-,t_\gamma^+)$, so 
\begin{align*}
A_0(\gamma)&=-\int_{t_\gamma^-}^{t_\gamma^+} D\Phi_\gamma(dt)
=\Phi_\gamma(t_\gamma^-)-\Phi_\gamma(t_\gamma^+) \\
& =F(t_\gamma^-,\gamma_x(t_\gamma^-),\gamma_s(t_\gamma^-)) -  F(t_\gamma^+,\gamma_x(t_\gamma^+),\gamma_s(t_\gamma^+)) .
\end{align*}
Thanks to the domination  $|A_h(\gamma)|\leq \| \nabla F\|_\infty (1+TV(\gamma_s))$,
 we deduce
\begin{align*}
&\int_{(0,T)\times \partial\Omega\times\R/2\pi\Z} F(t,x,s) \,
\mathbf 1_{m (x)\cdot e^{is}>0} \, 
( i\tau(x)\cdot e^{is}) \, 
dt \, d\mathcal H^1_{\lfloor\partial\Omega}(x) \, ds \\
&  =\int_{\Gamma} \left( F(t_\gamma^-,\gamma_x(t_\gamma^-),\gamma_s(t_\gamma^-)) -  F(t_\gamma^+,\gamma_x(t_\gamma^+),\gamma_s(t_\gamma^+))\right)\, d\omega(\gamma).
\end{align*}
We apply this to
\begin{align*}
F(t,x,s)=f(t,x,s)\phi_\e(x,s),\quad
\mathbf 1_{i\tau(x)\cdot e^{is}>\e}\leq \phi_\e(x,s)\leq \mathbf 1_{i\tau(x)\cdot e^{is}>0},
\end{align*}
where $f\in C_c^1((0,T)\times \partial\Omega\times\R/2\pi\Z )$ and $\phi_\e\in C^1( \partial\Omega\times\R/2\pi\Z)$. 
Since for $\omega$-a.e. $\gamma\in\Gamma$ it holds $\gamma_x(t)\in\Omega$ for all $t\in (t_\gamma^-,t_\gamma^+)$, 
by the characteristic speed constraint \eqref{e:charac}, we have $i\tau(\gamma_x(t_\gamma^+))\cdot e^{i\gamma_s(t_\gamma^+)}\leq 0$ if $t_\gamma^+< T$, so $\phi_\e(\gamma_x(t_\gamma^+),\gamma_s(t_\gamma^+))=0$, and we deduce
\begin{align*}
&\int_{(0,T)\times \partial\Omega\times\R/2\pi\Z} f(t,x,s)\phi_\e(x,s) \,
\mathbf 1_{m (x)\cdot e^{is}>0} \, 
( i\tau(x)\cdot e^{is}) \, 
dt \, d\mathcal H^1_{\lfloor\partial\Omega}(x) \, ds \\
&  =\int_{\Gamma} f(t_\gamma^-,\gamma_x(t_\gamma^-),\gamma_s(t_\gamma^-)) \phi_\e(\gamma_x(t_\gamma^-),\gamma_s(t_\gamma^-))\, d\omega(\gamma).
\end{align*}
By dominated convergence as $\e\to 0$ this implies
\begin{align*}
&\int_{(0,T)\times \partial\Omega\times\R/2\pi\Z} f(t,x,s)
\,\mathbf 1_{i\tau(x)\cdot e^{is}>0} 
\,\mathbf 1_{m (x)\cdot e^{is}>0} \, 
( i\tau(x)\cdot e^{is}) \, 
dt \, d\mathcal H^1_{\lfloor\partial\Omega}(x) \, ds \\
&  =\int_{\Gamma} f(t_\gamma^-,\gamma_x(t_\gamma^-),\gamma_s(t_\gamma^-)) 
\,\mathbf 1_{i\tau(\gamma_x(t_\gamma^-))\cdot e^{i\gamma_s(t_\gamma^-))}>0} 
\, d\omega(\gamma).
\end{align*}
As above, for $\omega$-a.e. $\gamma\in\Gamma_{ini}$ we have  $i\tau(\gamma_x(t_\gamma^-))\cdot e^{i\gamma_s(t_\gamma^-))}\ge0$.
Indeed the curves that enter tangentially into $\Omega$, namely for which $i\tau(\gamma_x(t_\gamma^-))\cdot e^{i\gamma_s(t_\gamma^-))} = 0$, 
are $\omega$-negligible (see \cite[(3.5)]{CHLM22} for details).
In particular for $\omega$-a.e. $\gamma\in\Gamma_{ini}$ we have  $i\tau(\gamma_x(t_\gamma^-))\cdot e^{i\gamma_s(t_\gamma^-))}>0$ and we infer
\begin{align*}
&\int_{(0,T)\times \partial\Omega\times\R/2\pi\Z} f(t,x,s)
\,\mathbf 1_{i\tau(x)\cdot e^{is}>0} 
\,\mathbf 1_{m (x)\cdot e^{is}>0} \, 
( i\tau(x)\cdot e^{is}) \, 
dt \, d\mathcal H^1_{\lfloor\partial\Omega}(x) \, ds \\
&  =\int_{\Gamma_{ini}} f(t_\gamma^-,\gamma_x(t_\gamma^-),\gamma_s(t_\gamma^-))  
\, d\omega(\gamma),
\end{align*}
for any $f\in C_c^1((0,T)\times \partial\Omega\times\R/2\pi\Z)$. By approximation this is valid for any $f\in C_c^0(0,T)\times \partial\Omega\times\R/2\pi\Z)$, concluding the proof of Lemma~\ref{l:projinit}.
\end{proof}

\subsection{Proof of Proposition~\ref{p:adissiptrace}}

Before proving Proposition~\ref{p:adissiptrace} we set some notations and definitions.
\begin{itemize}
\item We apply Proposition~\ref{p:lagrangiandissipation} for some fixed $T\geq 3\pi$, and let $h \in L^1(\partial \Omega)$  be defined by the relation 
\begin{align}\label{eq:h}
\int_E h\, d\mathcal H^1 = \int_{\{\gamma : \gamma_x (t^-_\gamma) \in E\}} |D_t \gamma_s| (I_\gamma) \, d\omega_h(\gamma),
\end{align}
that is, $h(x)$ encodes the entropy dissipation generated (via Proposition~\ref{p:lagrangiandissipation}) by the curves of the Lagrangian representation emanating from $x\in\partial\Omega$.

\item We denote by $W$ the `wrong trace' set $W=\lbrace m=\tau\rbrace\subset\partial\Omega$,  by $\mathcal M \mathbf 1_W$ the maximal function
\begin{align*}
\mathcal M\mathbf 1_W (x)=\sup_{r>0}\frac{1}{r}\int_{I_r(x)} \mathbf 1_W d\mathcal H^1,
\end{align*}
where $I_r(x)=\p([t-r,t+r])$ for $x=\p(t)$,
and for any $\e>0$ we define the set
\begin{align}\label{eq:Geps}
G_\e =\lbrace \mathcal M\mathbf 1_W <\e \rbrace \subset\partial\Omega,
\end{align}
of boundary points where the proportion of wrong traces is less than $\e$ at any scale around that point. Note that the Hardy-Littlewood inequality ensures that $\mathcal H^1(\partial\Omega\setminus G_\e)\lesssim \e^{-1}\mathcal H^1(W)$.
\end{itemize}

The main, and most technical, part of Proposition~\ref{p:adissiptrace}'s proof is encoded in the following lemma.
\begin{lem}\label{l:local_lower_bound}
There exist $C,c,\e>0$, depending only on $K$, such that, for any $\hat x\in (E(\eta_*)\cap G_\e)^3$ with $a(\hat x)>0$, 
the quantity $a(\hat x)$ provides the following lower bound on the entropy dissipation:
\begin{align*}
\frac{C}{T}\int_{I(\hat x, a(\hat x))} (h(x_1) + h(x_2) + h(x_3) ) \,d (\mathcal H^1)^{\otimes 3} \geq  a(\hat x)^5,
\end{align*}
where
\begin{align*}
I(\hat x, a(\hat x))& = I_1 \times I_2 \times I_3 ,\qquad
I_k = I_{c\,a(\hat x)}(x_k)=\p([\bar s_k-c\, a(\hat x),\bar s_k +c\, a(\hat x)]),
\end{align*}
and $x_k=\p(\bar x_k)$ for $k=1,2,3$.
\end{lem}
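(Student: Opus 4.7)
The plan is to implement a quantitative version of the basic geometric obstruction from the introduction, via the Lagrangian representation of Subsection~\ref{ss:lag}. Fix witnesses $(\alpha_1,\alpha_2,\alpha_3)$ realizing $a:=a(\hat x)>0$, with concurrence point $z_0\in B_{R/2}(x_0)$. By Lemma~\ref{l:segment} each segment $[x_k,z_0]$ lies in $\Omega\cup\{x_k\}$, and since $\dist(z_0,\partial\Omega)\gtrsim R\sim 1$ the directions $e^{i\alpha_k}$ are uniformly transverse to $\partial\Omega$ at $x_k$, in the sense that $|i\tau(x_k)\cdot e^{i\alpha_k}|\gtrsim 1$.

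For each $k$ I would define a packet $\Sigma_k\subset\Gamma$ of Lagrangian trajectories by requiring $t_\gamma^-\in(0,T)$, $\gamma_x(t_\gamma^-)\in I_k$, $m(\gamma_x(t_\gamma^-))=-\tau(\gamma_x(t_\gamma^-))$, and $|\gamma_s(t_\gamma^-)-\alpha_k|<c_1 a$, for a small constant $c_1=c_1(K)$; when $e^{i\alpha_k}$ is exiting at $x_k$ one considers instead the symmetric packet defined using $t_\gamma^+$. Since $x_k\in G_\e$ implies $\H^1(W\cap I_k)\leq \e|I_k|\sim \e a$, Lemma~\ref{l:projinit} together with the above transversality bound and condition (2) of Definition~\ref{def:a} (which gives $-\tau\cdot e^{i\alpha_k}\geq a$, so that the indicator $\mathbf 1_{m\cdot e^{is}>0}$ equals $1$ throughout the relevant set) yields $\omega(\Sigma_k)\gtrsim Ta^2$, for $\e$ small enough in terms of $c_1$.

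The core of the argument is then a quantitative non-concurrence. Call $\gamma\in\Sigma_k$ \emph{straight} if $\mu_\gamma(I_\gamma)\leq c_2 a$ for a small absolute constant $c_2$. A straight trajectory remains within $O(c_2 a)$ of the line $x_k+e^{i\alpha_k}\R$, hence crosses a fixed $O(c_2 a)$-neighborhood $U$ of $z_0$. Along this crossing, the Lagrangian property \eqref{E_repr_formula} combined with the characteristic equation \eqref{e:charac} forces $m(\gamma_x(t))\cdot e^{i\gamma_s(t)}>0$ for a.e.\ $t$, and since $|\gamma_s(t)-\alpha_k|\lesssim c_2 a$ this translates to $m(z)\cdot e^{i\alpha_k}\gtrsim -c_2 a$ on the set in $U$ swept by $\gamma$. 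If each of the three packets $\Sigma_k$ contained an $\omega$-majority of straight trajectories, a Fubini-type coverage argument based on \eqref{E_repr_formula} would produce a positive-measure set $Z\subset U$ on which $m(z)\cdot e^{i\alpha_k}\gtrsim -c_2 a$ holds simultaneously for $k=1,2,3$, contradicting condition (3) of Definition~\ref{def:a} once $c_2$ is small enough in terms of $a$ (no unit vector has positive scalar product with three directions whose shortest covering arc exceeds $\pi+a$).

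Therefore some $\Sigma_{k_0}$ must contain a sub-packet of $\omega$-measure $\gtrsim Ta^2$ of non-straight trajectories, each contributing $\geq c_2 a$ to $\mu_\gamma(I_\gamma)$. Using the definition \eqref{eq:h} we then get
\[
\int_{I_{k_0}} h\, d\H^1 \;\geq\; \int_{\{\gamma\in\Sigma_{k_0}\colon \mu_\gamma(I_\gamma)\geq c_2 a\}} \mu_\gamma(I_\gamma)\, d\omega \;\gtrsim\; Ta^3,
\]
and multiplying by $|I_j|^2\sim a^2$ (the other two factors in the product $I(\hat x,a)$) yields the stated inequality after dividing by $T$. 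The main obstacle will be making the quantitative non-concurrence of the third paragraph rigorous: since the Lagrangian representation is intrinsically measure-theoretic, the pointwise geometric obstruction has to be translated via a careful Fubini/disintegration argument using \eqref{E_repr_formula}, and the bookkeeping has to be adapted in the exiting case by invoking a symmetric version of Lemma~\ref{l:projinit} at $t_\gamma^+$, where the roles of $i\tau\cdot e^{is}>0$ and $i\tau\cdot e^{is}<0$ are interchanged.
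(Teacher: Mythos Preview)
Your plan matches the paper's strategy: both define packets of trajectories indexed by a boundary arc around $x_k$ and a direction window around $e^{i\alpha_k}$, use Lemma~\ref{l:projinit} together with the wrong-trace control $x_k\in G_\e$ to lower-bound the packet's $\omega$-mass, and derive the dichotomy ``either low dissipation forces coverage of a neighborhood of $z_0$, or $\int_{I_k}h\gtrsim Ta^3$''. The difference is in how the coverage step is executed. The paper works in phase space, introducing cylinders $\mathcal C_k=B_{c_1a}(z_0)\times[\alpha_k-c_2a,\alpha_k+c_2a]$, and compares $\int_{G_k} t_{\mathcal C_k}\,d\omega$ (the \emph{actual} time spent in $\mathcal C_k$, bounded above via \eqref{E_repr_formula} by $\tfrac{T}{3}\mathcal L^3(\mathcal C_k\cap E_m)$) with $\int_{G_k}\tilde t_{\mathcal C_k}\,d\omega$ (the time the associated \emph{free} characteristic spends in $\mathcal C_k$, which an exact change of variables shows equals $(1-O(\e))\tfrac{T}{3}\mathcal L^3(\mathcal C_k)$). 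The discrepancy $\tilde t_{\mathcal C_k}-t_{\mathcal C_k}$ is then bounded, via a slightly shrunk cylinder $\tilde{\mathcal C}_k$, in terms of $\int_{I_k}h$. This yields directly $\mathcal L^3(\mathcal C_k\cap E_m)\ge\tfrac34\mathcal L^3(\mathcal C_k)$ unless $\int_{I_k}h\gtrsim Ta^3$, and the $\tfrac34$ threshold is exactly what makes the three projected sets $A_k\subset B_{c_1a}(z_0)$ intersect.

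Your straight/non-straight dichotomy is equivalent in spirit, but the phrase ``a Fubini-type coverage argument would produce a positive-measure set $Z$'' hides precisely this quantitative point: knowing only that an $\omega$-majority of $\Sigma_k$ is straight yields $\mathcal L^2(A_k)\gtrsim a^2$, not $\mathcal L^2(A_k)>\tfrac23\mathcal L^2(U)$, and the latter is what the intersection argument needs. To close this you will have to (i) compute $\omega(\Sigma_k)$ \emph{up to a $(1-O(\e))$ multiplicative factor}, not merely $\gtrsim Ta^2$, using the exact Jacobian identity from Lemma~\ref{l:projinit}; and (ii) shrink $U$ (equivalently, restrict to trajectories whose free line hits a slightly smaller ball) so that each straight trajectory spends time bounded \emph{below} by a definite multiple of $a$ in $U$, rather than possibly grazing its boundary. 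Once this bookkeeping is carried out your argument coincides with the paper's.
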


\begin{proof}[Proof of Lemma~\ref{l:local_lower_bound}]
Let $\hat x \in (E(\eta_*)\cap G_\e)^3$ with $a(\hat x)>0$

First we choose the constant $c=c(K)>0$ appearing in the definition of $I(\hat x,a(\hat x))$ in order to ensure
\begin{align*}
e^{i\alpha_k}\cdot \tau(x)\leq -\frac{a(\hat x)}{2}\;\forall x\in I_k,
\quad
\text{and }
I_k\subset E(2\eta_*),
\end{align*}
where $\alpha_k$ are the angles in the definition of $a(\hat x)$. 
The first condition can be imposed because $\tau$ is $K$-Lipschitz and $e^{i\alpha_k}\cdot \tau(x_k)\leq -a(\hat x)$, and the second thanks to Lemma~\ref{l:continuation}, since $2\eta_*\leq 1/(4K)$ (as imposed in the statement of Proposition~\ref{p:adissiptrace}).

We denote by $z_0$ the intersection point as in the definition of $a(\hat x)$, and consider the three cylinders
\[
\mathcal C_k:=  B_{c_1a(\hat x)} (z_0) \times [\alpha_k - c_2 a(\hat x), \alpha_k + c_2 a(\hat x)] \qquad \mbox{for } k=1,2,3,
\]
where $c_1,c_2>0$ are small constants depending on $K$ and chosen to ensure that:
\begin{itemize}
\item  for any $(s_1,s_2,s_3)\in\Pi_{k=1}^3 [\alpha_k - c_2 a(\hat x), \alpha_k + c_2 a(\hat x)]$, the shortest interval in $\R/2\pi\Z$ containing $s_1,s_2,s_3$ has length $l > \pi$;
\item for all $(x,s)\in\mathcal C_k$, there is a  boundary point $y\in I_k $ such that $x=y+te^{is}$ for some $t\in\R$, the segment $[x,y]$ is contained in $\overline\Omega$, and $\tau(y)\cdot e^{is}<0$. 
\end{itemize}
The last property is possible   since $2\eta_*\leq \eta_0 $ with $\eta_0$ as in Lemma~\ref{l:segment}.

\medskip

\noindent\textbf{Claim.} For every $k=1,2,3$ we have at least one of the following two properties:
\begin{align*}
\mathcal L^3(\mathcal C_k \cap E_m) \ge \frac34 \mathcal L^3 (\mathcal C_k),\qquad
\text{or }\quad &\frac1T\int_{I_k}h\, d\mathcal H^1 \gtrsim a(\hat x)^3.
\end{align*}
In other words, either most of the elements of $\mathcal C_k$ belong to the `epigraph' $E_m$, or there must occur entropy dissipation of order $a(\hat x)$.

\medskip 

We first prove the statement assuming the Claim.
Since $\mathcal H^1(I_k)= 2c\,a(\hat x) $ for every $k=1,2,3$, then
\[
\int_{I(\hat x, a(\hat x))} (h(x_1) + h(x_2) + h(x_3) ) \,d (\mathcal H^1)^{\otimes 3} =2c\,  a(\hat x)^2 \sum_{k=1}^3 \int_{I_k}h \, d\mathcal H^1 .
\]
In view of the Claim, in order to conclude the proof it is sufficient to check that
the first property $\mathcal L^3(\mathcal C_k \cap E_m) \ge (3/4) \mathcal L^3 (\mathcal C_k)$ cannot be satisfied for all $k=1,2,3$.
Assume by contradiction that this is the case:
for every $k=1,2,3$, let 
\[
A_k= \{ x \in B_{c_1a(\hat x)}(z_0): \mathcal L^1(\{ s: (x,s) \in \mathcal C_k \cap E_m\})>0\}.
\]
In particular we have  $\mathcal L^2(A_k)\ge \frac34 \mathcal L^2( B_{c_1a(\hat x)}(z_0))$ so that 
\[
\mathcal L^2(A_1\cap A_2 \cap A_3) \ge \frac14 \mathcal L^2( B_{c_1a(\hat x)}(z_0))>0.
\]
But the definition of $a(\hat x)$ implies that $A_1\cap A_2\cap A_3=\emptyset$. Indeed for every triple $(s_1,s_2,s_3) \in \prod_{k=1}^3[\alpha_k - c_2 a(\hat x), \alpha_k + c_2 a(\hat x)]$,
on the one hand the choice of $c_2$ ensures that
there is no $\alpha \in \R/2\pi\Z$ such that $e^{i\alpha}\cdot e^{is_k}>0$ for every $k=1,2,3$, and on the other hand for $\mathcal L^3$-a.e. $(x,s) \in E_m$ we have
$m(x)\cdot e^{is}>0$.  So this gives a contradiction.

\medskip

It remains to prove the Claim. 
For $k=1,2,3$, we consider the set of curves $G_k\subset \Gamma$ defined as follows.
If the direction $e^{i\alpha_k}$ enters $\Omega$, $G_k$ consists of the curves which enter $\Omega$
in a way that the  `free characteristic' (i.e. straight line) entering with the same initial  direction intersects the cylinder $\mathcal C_k$. 
 If the direction $e^{i\alpha_k}$ exits $\Omega$, $G_k$ consists of the curves which exit $\Omega$
in a way that the free characteristic exiting with the same final  direction intersects the cylinder $\mathcal C_k$.
Explicitly:
\begin{align*}
 G_k  =\Bigg\{ \gamma \in \Gamma \colon  \exists \bar t \in \left[\frac{T}3,\frac23T\right], 
& 
 \left( \gamma_x(t^-_\gamma) + e^{i\gamma_s(t^-_\gamma)}(\bar t - t^-_\gamma),\gamma_s(t_\gamma^-)\right) \in   \mathcal C_k \Bigg\} \\
 & \hspace{10em}\text{if }(x_k - z_0) \cdot \tau (x_k) >0, \\
 G_k=\Bigg\{ \gamma \in \Gamma \colon   \exists \bar t \in \left[\frac{T}3,\frac23T\right],
 &  
 \left(\gamma_x(t^+_\gamma) + e^{i\gamma_s(t^+_\gamma)}(\bar t - t^+_\gamma),\gamma_s(t_\gamma^+)\right) \in  \mathcal C_k \Bigg\} \\
 &\hspace{10em} \text{if }(x_k - z_0) \cdot \tau (x_k) <0.
\end{align*}
Moreover for $\gamma\in G_k$ we denote by $t_{\mathcal C_k}(\gamma)$ the time spent by $\gamma$ in $\mathcal C_k$, and by $\tilde t_{\mathcal C_k}(\gamma)$ the time spent in $\mathcal C_k$ by the corresponding (entering or exiting) free characteristic.
Explicitly:
\begin{align*}
t_{\mathcal C_k}(\gamma) & = \mathcal L^1\left(\left\{ t \in \left[\frac{T}3,\frac23T\right]:\gamma(t) \in \mathcal C_k \right\}\right) & \\
\tilde t_{\mathcal C_k}(\gamma)& = \mathcal L^1\left(\left\{ t \in \left[\frac{T}3,\frac23T\right],
\left( \gamma_x(t^-_\gamma) + e^{i\gamma_s(t^-_\gamma)}(  t - t^-_\gamma),\gamma_s(t_\gamma^-)\right) \in  \mathcal C_k \right\} \right) \\
&\hspace{20em} \mbox{if }(x_k - z_0) \cdot \tau (x_k) >0, \\
\tilde t_{\mathcal C_k}(\gamma)& = \mathcal L^1\left(\left\{ t \in \left[\frac{T}3,\frac23T\right],
\left(  \gamma_x(t^+_\gamma) + e^{i\gamma_s(t^+_\gamma)}(\bar t - t^+_\gamma),\gamma_s(t_\gamma^+)\right) \in  \mathcal C_k \right\} \right) \\
& \hspace{20em}\mbox{if }(x_k - z_0) \cdot \tau (x_k) <0,
\end{align*}

Since $T\ge 3\pi$, the choices of $c_1,c_2$ ensure that
\begin{align}\label{e:tildet}
&(1-\beta)\frac{T}3 \mathcal L^3(\mathcal C_k)
\leq
\int_{G_k} \tilde t_{\mathcal C_k} d \omega \leq \frac{T}3 \mathcal L^3(\mathcal C_k),\\
\text{where }&
\beta\lesssim \frac{\mathcal H^1(\lbrace x\in I_k(\hat x,a(\hat x))\colon m(x)=\tau(x)\rbrace)}{\mathcal H^1(I_k(\hat x,a(\hat x)))}\lesssim\e.
\nonumber
\end{align}
To prove \eqref{e:tildet}, assume without loss of generality that $(x_k-z_0)\cdot \tau(x_k)>0$. Then, by Fubini theorem, we have
\begin{align*}
\int_{G_k} \tilde t_{\mathcal C_k} d \omega &
=\int_{T/3}^{2T/3}\omega\left(A_t\right)\, dt, \\
\text{where}\quad A_t &= 
\left\lbrace \gamma\in \Gamma \colon
\left( \gamma_x(t^-_\gamma) + e^{i\gamma_s(t^-_\gamma)}(  t - t^-_\gamma),\gamma_s(t_\gamma^-)\right) \in  \mathcal C_k\right\rbrace.
\end{align*}
Since $T/3\geq \pi$ and $\Omega$ has diameter $<\pi$, for $t\in [T/3,2T/3]$ any $\gamma\in A_t$ satisfies $t_\gamma^->0$, and moreover $\gamma_x(t_\gamma^-)\in I_k$ 
and $i\tau(\gamma_x(t_\gamma^-))\cdot e^{i\gamma_s(t_\gamma^-)}>0$
thanks to the choices of $c_1,c_2$.
Invoking Lemma~\ref{l:projinit} this implies that
\begin{align*}
\omega(A_t)  = 
\int_{\alpha_k-c_2a(\hat x)}^{\alpha_k+c_2a(\hat x)}
\int_{I_k}
\int_0^{2T/3}
&
\mathbf 1_{y+(t-t_{ini})e^{is} \in B_{c_1a(\hat x)}(z_0)} 
\\
&\cdot 
 \mathbf 1_{m(y)\cdot e^{is}>0}( i\tau(y)\cdot e^{is})\, 
 dt_{ini}\,d\mathcal H^1(y)\, ds.
\end{align*}
For any $s\in [\alpha_k-c_2a(\hat x),\alpha_k+c_2a(\hat x)]$ the map $(t_{ini},y)\mapsto y +(t-t_{ini})e^{is}$ is injective, its image contains $ B_{c_1a(\hat x)}(z_0)$, and its jacobian is $i\tau(y)\cdot e^{is}>0$, 
so we deduce 
\begin{align*}
\omega(A_t)  = 
\int_{\alpha_k-c_2a(\hat x)}^{\alpha_k+c_2a(\hat x)}
\int_{B_{c_1a(\hat x)}(z_0)}
\mathbf 1_{m(x_s(z))\cdot e^{is}>0}
\,
 dz\, ds,
\end{align*}
where $x_s(z)\in I_k$ is the intersection point of the half-line $z- [0,\infty)\, e^{is}$ with the boundary arc $I_k$. For $z\in B_{c_1 a(\hat x)}(z_0)$ and $s\in [\alpha_k-c_1 a(\hat x),\alpha_k+c_2 a(\hat x)]$, recalling that $m\in\lbrace \pm\tau\}$ on $\partial\Omega$, we have that $m(x_s(z))\cdot e^{is}>0$ if and only if $m(x_s(z))=-\tau(x_s(z))$, and we deduce the validity of \eqref{e:tildet} with
\begin{align*}
\beta= \sup_{z\in B_{c_1 a(\hat x)}(z_0)} \frac{ \mathcal L^1(\lbrace s\in [\alpha_k-c_1 a(\hat x),\alpha_k+c_2 a(\hat x)] \colon m(x_s(z))=\tau(x_s(z))\rbrace)}
{\mathcal L^1(  [\alpha_k-c_1 a(\hat x),\alpha_k+c_2 a(\hat x)] )}.
\end{align*}
The first inequality on $\beta$ in the second line of \eqref{e:tildet} follows from the fact that, for all  $z\in B_{c_1 a(\hat x)}(z_0)$, the map $s\mapsto x_s(z)$ is a diffeomorphism from $[\alpha_k-c_1 a(\hat x),\alpha_k+c_2 a(\hat x)]$ onto its image in $I_k$, with jacobian bounded from below by $R/2\geq 1/(2K)$. 
The second inequality $\beta\lesssim\e$ in \eqref{e:tildet} is simply by definition \eqref{eq:Geps} of the set $G_\e$.

Moreover, since for every $t \in \left[0,T\right]$ we have $(e_t)_\# \omega =\mathbf 1_{E_m} \mathcal L^3$, then
\begin{equation}\label{e:t}
\int_{G_k}t_{\mathcal C_k} d\omega \le \frac{T}3\mathcal L^3(\mathcal C_k \cap E_m).
\end{equation}
We now estimate $\tilde t_{\mathcal C_k} - t_{\mathcal C_k}$ in terms of the entropy dissipation from the curves in $G_k$.
Assume without loss of generality that $k$ is such that $(x_k - z_0) \cdot \tau (x_k) >0$.

Denote by 
\[
\tilde {\mathcal C}_k = \{ (x,s) \in \mathcal C_k : \dist( (x,s), \partial \mathcal C_k)> c_4 a(\hat x) \}
\]
for some $c_4 \in (0, \min\{ c_1, c_2/2\})$
and by $\tilde G_k \subset G_k$ the set of curves $\gamma$ such that there is $t\in (t^-_\gamma, t^+_\gamma)$ such that 
$\gamma_x(t^-_\gamma) + e^{i\gamma_s(t^-_\gamma)}( t - t^-_\gamma) \in \tilde{\mathcal C}_k$. 
Finally denote by 
\[
\tilde G_k^* = \{ \gamma \in \tilde G_k : \tilde t_{\mathcal C_k}(\gamma) - t_{\mathcal C_k}(\gamma) > c_5 a(\hat x)\}.
\]

For every $c_4,c_5>0$, by the characteristic constraint \eqref{e:charac}, there is $c_6>0$ such that for all $\gamma \in \tilde G_k^*$ it holds 
$|\mu_\gamma|((0,T))\ge c_6 a(\hat x)$.

We write 
\begin{align}\label{eq:inttildet}
\int_{G_k} (\tilde t_{\mathcal C_k} - t_{\mathcal C_k}) d\omega 
&
\le 
  \int_{G_k\setminus \tilde G_k}\tilde t_{\mathcal C_k} d\omega + \int_{\tilde G_k\setminus \tilde G_k^*}  (\tilde t_{\mathcal C_k} - t_{\mathcal C_k}) d\omega + \int_{\tilde G_k^*}  \tilde t_{\mathcal C_k}  d\omega,
\end{align}
and estimate each integral separately. First, the argument leading to the upper bound in \eqref{e:tildet} also implies
\begin{align}\label{eq:inttildet1}
\int_{G_k\setminus \tilde G_k}\tilde t_{\mathcal C_k} d\omega 
& \leq\frac{T}{3}\mathcal L^3(\mathcal C_k\setminus \tilde{\mathcal C}_k) 
\leq \frac{2T}{3}c_4\left( \frac1{c_1}+\frac1{c_2}\right)\mathcal L^3(\mathcal C_k).
\end{align}
Second,  by definition of $\tilde G_k^*$ we have
\begin{align*}
\int_{\tilde G_k\setminus \tilde G_k^*}  (\tilde t_{\mathcal C_k} - t_{\mathcal C_k}) d\omega
\leq c_5 a(\hat x)\omega(\tilde G_k),
\end{align*}
and since $\tilde t_{C_k}(\gamma)\geq \sqrt{c_1c_4}a(\hat x)$ for all $\gamma\in\tilde G_k$, from \eqref{e:tildet} we deduce $\sqrt{c_1c_4}a(\hat x)\omega(\tilde G_k)\leq(T/3)\mathcal L^3(\mathcal C_k)$, and plugging this into the previous equation yields
\begin{align}
\label{eq:inttildet2}
\int_{\tilde G_k\setminus \tilde G_k^*}  (\tilde t_{\mathcal C_k} - t_{\mathcal C_k}) d\omega
\leq \frac{T}{3}\frac{c_5}{\sqrt{c_1c_4}}\mathcal L^3(\mathcal C_k).
\end{align}
Third, by definition of $c_6$, the third integral in \eqref{eq:inttildet} enjoys the estimate
\begin{align*}
\int_{\tilde G_k^*}  \tilde t_{\mathcal C_k}  d\omega  &
\leq \sup \tilde t_{\mathcal C_k}\, \omega(\tilde G_k^*) 
\leq \frac{ 2c_1 a(\hat x)}{c_6a(\hat x)}\int_{\tilde G_k^*} |\mu_\gamma|(0,T) d\omega(\gamma)
\\
&\leq 
\frac{ 2c_1}{c_6}\int_{I_k(\hat x, a(\hat x))}h\, d\mathcal H^1,
\end{align*}
so plugging this and \eqref{eq:inttildet1}-\eqref{eq:inttildet2} into \eqref{eq:inttildet} we infer
\begin{align*}
\int_{G_k} (\tilde t_{\mathcal C_k} - t_{\mathcal C_k}) d\omega 
&
\le 
  \frac{2T}{3}c_4\left( \frac1{c_1}+\frac1{c_2}\right)\mathcal L^3(\mathcal C_k)
  + \frac{T}{3}\frac{c_5}{\sqrt{c_1c_4}}\mathcal L^3(\mathcal C_k)
  \\
  &\quad
  + \frac{ 2c_1}{c_6}\int_{I_k(\hat x, a(\hat x))}h\, d\mathcal H^1
\end{align*}
We may choose $c_4$ and $c_5$ small enough so that 
\begin{align*}
 \frac{2T}{3}c_4\left( \frac1{c_1}+\frac1{c_2}\right)\mathcal L^3(\mathcal C_k)
  + \frac{T}{3}\frac{c_5}{\sqrt{c_1c_4}}\mathcal L^3(\mathcal C_k)
  \leq \frac{T}{24}\mathcal L^3(\mathcal C_k)
\end{align*}
so that by \eqref{e:t}  and \eqref{e:tildet}, we deduce
\begin{align*}
\frac{T}3\mathcal L^3(\mathcal C_k \cap E_m) &\ge 
 \int_{G_k}\tilde t_{\mathcal C_k} d\omega - \int_{G_k}(\tilde t_{\mathcal C_k} -  t_{\mathcal C_k}) d\omega \\
& \ge 
\frac T3 \left( \frac 78 - c\e \right)\mathcal L^3(\mathcal C_k) -  \frac{ 2c_1}{c_6}\int_{I_k(\hat x, a(\hat x))}h \,d\mathcal H^1,
\end{align*}
for some absolute constant $c>0$. Choosing $\e=1/(16c)$  we deduce
\[
\mathcal L^3(\mathcal C_k \cap E_m) \ge \frac34 \mathcal L^3(\mathcal C_k ) + \frac1{16} \mathcal L^3(\mathcal C_k ) - \frac{6c_1}{c_6T}\int_{I_k(\hat x, a(\hat x))}h\, d\mathcal H^1.
\]
This estimate implies the claim: if $\mathcal L^3(\mathcal C_k \cap E_m) \leq \frac34 \mathcal L^3(\mathcal C_k )$ then 
 $\int_{I_k(\hat x, a(\hat x))}h\, d\mathcal H^1\geq (c_6T/(6c_1))\mathcal L^3(\mathcal C_k)/16 =(\pi T c_1 c_2 c_6/48)\, a(\hat x)^3$.
\end{proof}

Now Proposition~\ref{p:adissiptrace} follows from Lemma~\ref{l:local_lower_bound} via a covering argument.

\begin{proof}[Proof of Proposition \ref{p:adissiptrace}]
Denote by $L$ the Lipschitz constant of $a$ from Lemma \ref{l:a_Lip}. 
Let $\e>0$ be fixed as in Lemma~\ref{l:local_lower_bound}, and consider the covering 
\begin{align*}
\{ I(\hat x, a(\hat x)): \hat x \in (E(\eta_0)\cap G_\e)^3 \}
\end{align*}
of the set $X^*:=\{\hat x \in (E(\eta_0)\cap G_\e)^3\colon a(x)>0\}$. Since for every $\hat x \in X^*$, the diameter of $I(\hat x, a(\hat x))$ is $\lesssim a(\hat x)$ and the function $a$ is Lipschitz by Lemma~\ref{l:a_Lip}, we have
\begin{align}\label{e:a5}
\int_{I(\hat x, a(\hat x))} a^2\, d (\mathcal H^1)^{\otimes 3} \lesssim a(\hat x)^2 (\mathcal H^1)^{\otimes 3}(I(\hat x, a(\hat x))) \lesssim  a(\hat x)^5.
\end{align}
By Besicovitch covering theorem, there is a subcovering
\begin{align*}
\{  I(\hat x_i, a(\hat x_i)): i \in I \}
\end{align*}
of $X^*$ with finite overlap. 
By \eqref{e:a5}, Lemma \ref{l:local_lower_bound} and the finite overlap property  we obtain
\begin{align*}
\int_{X^*} a^2\, d (\mathcal H^1)^{\otimes 3} 
&
\le  \sum_{i\in I} \int_{ I(\hat x_i, a(\hat x_i))}a^2  d (\mathcal H^1)^{\otimes 3} 
\lesssim  \sum_{i\in I} a(\hat x_i)^5 \\
& \lesssim  \sum_{i\in I}\int_{ I(\hat x_i, r_+(\hat x_i))} \left( h(x_1) + h(x_2) + h(x_3) \right) d (\mathcal H^1)^{\otimes 3}\\
&\lesssim \int_{\partial\Omega^3}\left( h(x_1) + h(x_2) + h(x_3) \right) d (\mathcal H^1)^{\otimes 3} \lesssim \int_{\partial\Omega} h\, d\mathcal H^1.
\end{align*}
The definition \eqref{eq:h} of $h$ and Proposition~\ref{p:lagrangiandissipation} ensure that $\int_{\partial\Omega}h\,d\mathcal H^1\lesssim \nu(\Omega)$, so we deduce
\begin{align*}
\int_{(E(\eta_0)\setminus G_\e)^3} a^2\, d(\mathcal H^1)^{\otimes 3} \lesssim \nu(\Omega),
\end{align*}
which implies, since $0\leq a \leq \pi$,
\begin{align*}
\int_{E(\eta_0)^3}a^2\, d(\mathcal H^1)^{\otimes 3} \lesssim \nu(\Omega) +\mathcal H^1(\partial\Omega\setminus G_\e).
\end{align*}
Recalling the definition \eqref{eq:Geps} of $G_\e$, thanks to the the Hardy-Littlewood inequality  the last term is $\lesssim \e^{-1}\mathcal H^1(\lbrace m=\tau\rbrace)$, and this concludes the proof of Proposition~\ref{p:adissiptrace}.
\end{proof}

\section{Compactness argument}\label{s:comp}

In this section we use the characterization of zero-energy states \cite{JOP} and a compactness argument to `initialize' our analysis of the previous sections:
if $\nu(\Omega)$ is small enough, then $\Omega$ must be close to a disk and $m$ close to a vortex.

\begin{lem}\label{l:comp}
For any $K,\e>0$, there exists $\delta=\delta(\e,K)>0$ with the following property.
If $\Omega$ is a $C^{1,1}$ simply connected domain with $\mathcal H^1(\partial\Omega)=2\pi$, $\sup_{\partial\Omega}|\kappa|\leq K$ which admits a map $m$ solving
\eqref{eq:eik} and \eqref{eq:kin} and its dissipation measure $\nu$ defined in \eqref{e:def_nu} satisfies $\nu(\Omega)\leq\delta$, then 
\begin{align}\label{e:dist_circle}
\dist(\partial\Omega,\partial B_1(x_0)) + \sup_{x\in\partial\Omega}\left| n_{\partial\Omega}(x)-\frac{x-x_0}{|x-x_0|}\right| 
\leq \e,
\end{align}
for some $x_0\in\R^2$, and there is $\alpha\in\lbrace \pm 1\rbrace$ such that
\begin{align}\label{e:trace_diffuse}
\int_{\Omega}\mathbf 1_{\dist(\cdot,\partial\Omega)\leq \frac {1}{2K}}|m-\alpha\tau_{\partial\Omega}\circ\pi_{\partial\Omega}|\,dx
\leq \e,
\end{align}
where $\pi_{\partial\Omega}(x)\in\partial\Omega$ is the nearest-point projection of $x$ onto $\partial\Omega$, well-defined for $\dist(x,\partial\Omega)\leq 1/(2K)$.
\end{lem}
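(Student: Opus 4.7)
\medskip

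\noindent\textbf{Plan.} The strategy is a compactness/contradiction argument based on the zero-energy characterization of \cite{JOP}. Suppose the statement fails for some fixed $\e,K>0$. Then there exists a sequence $(\Omega_n,m_n)$ of admissible pairs satisfying the hypotheses with $\nu_n(\Omega_n)\to 0$, but such that for every $n$ either \eqref{e:dist_circle} fails at all $x_0\in\R^2$, or \eqref{e:trace_diffuse} fails at both $\alpha=\pm1$. I will show that, after extracting subsequences, $(\Omega_n,m_n)$ converges to a zero-energy state $(\Omega_\infty,m_\infty)$, hence to a disk/vortex pair, and that this convergence is strong enough to contradict the failure assumption.

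\medskip

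\noindent\textbf{Step 1 (compactness of the domains).} Parametrize $\partial\Omega_n$ by arc-length $\p_n\colon\R/2\pi\Z\to\R^2$. Since $|\dot \p_n|\equiv 1$ and $\|\ddot \p_n\|_{L^\infty}\leq K$, Arzel\`a--Ascoli applied after translation yields a subsequence with $\p_n\to \p_\infty$ in $C^1$, where $\p_\infty$ is a $C^{1,1}$ arc-length parametrization of the boundary of a simply-connected open set $\Omega_\infty$ with $\mathcal H^1(\partial\Omega_\infty)=2\pi$ and $\sup|\kappa_\infty|\leq K$. The normals $n_{\partial\Omega_n}$ converge uniformly to $n_{\partial\Omega_\infty}$, and $\Omega_n\to\Omega_\infty$ in Hausdorff distance.

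\medskip

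\noindent\textbf{Step 2 (compactness of the maps).} Extending $m_n$ by zero outside $\Omega_n$, the eikonal equation \eqref{eq:eik} and the kinetic formulation \eqref{eq:kin} with dissipation $|\sigma_n|\to 0$ imply, by the standard compactness result associated with the kinetic formulation (see e.g.\ \cite{dkmo01,JOP,vasseur01}), that the family $(m_n)$ is precompact in $L^2_{\loc}(\R^2)$. A subsequence converges to some $m_\infty\in L^2(\R^2;\R^2)$ supported in $\overline{\Omega_\infty}$. Moreover $|m_\infty|=1$ a.e.\ in $\Omega_\infty$, $\nabla\cdot(\mathbf 1_{\Omega_\infty}m_\infty)=0$ in $\R^2$, and passing to the limit in \eqref{eq:kin} (using $|\sigma_n|(\Omega_n\times\R/2\pi\Z)\to 0$) shows that $m_\infty$ satisfies the zero-energy kinetic equation \eqref{eq:kinzero} in $\Omega_\infty$. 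By the characterization of \cite{JOP}, together with $\mathcal H^1(\partial\Omega_\infty)=2\pi$, the domain $\Omega_\infty$ is a disk $B_1(x_\infty)$ and $m_\infty(x)=\alpha_\infty\,i(x-x_\infty)/|x-x_\infty|$ for some $\alpha_\infty\in\{\pm1\}$.

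\medskip

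\noindent\textbf{Step 3 (contradiction).} From Step~1, $n_{\partial\Omega_n}(x)\to n_{\partial\Omega_\infty}(x)=(x-x_\infty)/|x-x_\infty|$ uniformly and $\dist(\partial\Omega_n,\partial B_1(x_\infty))\to 0$, so \eqref{e:dist_circle} holds at $x_0=x_\infty$ for all large $n$. For \eqref{e:trace_diffuse}, observe that away from $x_\infty$ the vortex $m_\infty$ is continuous, and on $\partial\Omega_\infty$ one has $m_\infty=\alpha_\infty\,\tau_{\partial\Omega_\infty}$. Since $\partial\Omega_\infty$ is at positive distance from $x_\infty$, for $x$ in the $1/(2K)$-tubular neighborhood of $\partial\Omega_\infty$ the projection $\pi_{\partial\Omega_\infty}(x)$ is well defined and $|m_\infty(x)-\alpha_\infty\tau_{\partial\Omega_\infty}(\pi_{\partial\Omega_\infty}(x))|$ is small. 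Combining the uniform convergence of $\tau_{\partial\Omega_n}\circ\pi_{\partial\Omega_n}$ to $\tau_{\partial\Omega_\infty}\circ\pi_{\partial\Omega_\infty}$ (from Step~1) with the $L^2_{\loc}$-convergence $m_n\to m_\infty$ (from Step~2), we deduce \eqref{e:trace_diffuse} with $\alpha=\alpha_\infty$ for all large $n$. This contradicts the failure assumption and proves the lemma.

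\medskip

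\noindent\textbf{Main obstacle.} The delicate point is Step~2: establishing compactness of $(m_n)$ when the underlying domains vary. This is handled by extending by zero and invoking the compactness property built into the kinetic formulation \eqref{eq:kin}, which in turn depends on the fact that $\sigma_n\to 0$ in total variation (not merely boundedness). The passage to the limit in the kinetic formulation is then automatic, but one should verify that the strong trace available from \eqref{eq:kin} behaves well in the limit, so that the zero-energy characterization of \cite{JOP} applies to $m_\infty$ on the limit disk.
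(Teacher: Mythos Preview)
Your overall strategy matches the paper's, but there is a genuine gap in Step~1 that the paper's proof spends most of its effort resolving. You assert that the $C^1$-limit $\p_\infty$ parametrizes the boundary of a simply-connected $C^{1,1}$ domain $\Omega_\infty$. This is not automatic: the limit curve is $C^{1,1}$ and cannot self-\emph{cross}, but it can self-\emph{touch} tangentially (think of two circles pinching together). At such multiple points $\partial\Omega_\infty$ is not a $C^1$ manifold, and the characterization of \cite{JOP} does not directly apply. The paper deals with this by (i) classifying the possible singular points of $\partial\Omega_\infty$ into two types, (ii) showing the worse type is isolated, (iii) carefully verifying that the limit map $m_\infty$ still has a well-defined trace satisfying $m_\infty\cdot n_{\partial\Omega_\infty}=0$ on the regular part of $\partial\Omega_\infty$, and (iv) using the argument of \cite{JOP} locally to rule out the bad singularities \emph{a posteriori}. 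Only after this can one conclude $\Omega_\infty$ is a disk.

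A secondary issue is your Step~2: extending $m_n$ by zero to $\R^2$ and invoking global compactness is not quite clean, because the entropy productions $\nabla\cdot\Phi(\mathbf 1_{\Omega_n}m_n)$ acquire boundary terms on $\partial\Omega_n$ that are not controlled by $\nu_n(\Omega_n)$. The paper instead uses interior $L^1_{\loc}(\Omega_\infty)$ compactness (working on balls $B_r(z)\subset\Omega_\infty$, which are eventually contained in $\Omega_n$), and then separately verifies the boundary condition $m_\infty\cdot n_{\partial\Omega_\infty}=0$ by a local graph argument near each $C^1$ boundary point. Your sketch also omits this trace verification, which is needed before \cite{JOP} can be invoked.
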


\begin{proof}[Proof of Lemma~\ref{l:comp}]
Assume by contradiction that there exists a sequence of $C^2$ simply connected domains $\Omega_k$ such that $\mathcal H^1(\partial\Omega_k)=2\pi$, $\sup_{\partial\Omega_k}|\kappa|\leq K$ with maps $m_k\colon\Omega_k\to\mathbb S^1$ satisfying $\nabla\cdot m_k=0$ in $\Omega$, $m_k\cdot n_{\partial\Omega_k}=0$ on $\partial\Omega_k$ and $\nu_{m_k}(\Omega_k)=\delta_k\to 0$, such that
\begin{align*}
\inf_{x_0\in\R^2}\left\lbrace \dist(\partial\Omega_k,\partial B_1(x_0)) + \sup_{x\in\partial\Omega_k}\left| n_{\partial\Omega_k}(x)-\frac{x-x_0}{|x-x_0|}\right| \right \rbrace &\geq \e,
\end{align*}
or $\pi_{\partial\Omega_k}(x)$ is not well-defined (that is, not unique) for $\dist(x,\partial\Omega_k)\leq 1/(2K)$, or
\begin{align*}
\min_{\alpha\in\lbrace \pm 1\rbrace}\int_{\Omega_k}\mathbf 1_{\dist(\cdot,\partial\Omega_k)\leq \frac {1}{2K}}|m-\alpha\tau_{\partial\Omega_k}\circ\pi_{\partial\Omega_k}|\,dx
&
\geq \e.
\end{align*}

Let $\p_k\in C^2(\R/2\pi\Z;\R^2)$ be a counterclockwise arc-length parametrization of $\partial\Omega_k$. Up to a translation we may assume that $\int_{\R/2\pi\Z}\p_k(t)\, dt=0$.
Since $|\ddot\p_k|\leq K$ there exists a subsequence (which we don't relabel) such that $\p_k$ convergence in $C^1(\R/2\pi\Z;\R^2)$ to a curve $\p\in C^{1,1}(\R/2\pi\Z;\R^2)$ with $|\dot\p|=1$. The curve $\p$ can self-intersect, but not self-cross, so at a multiple point all tangents must be parallel.

Each domain $\Omega_k$ contains a disk of radius $\geq 1/K$ \cite[Proposition~2.1]{howard-treibergs}, so $\R^2\setminus \p(\R/2\pi\Z)$ has an open bounded simply connected component containing a disk of radius $\geq 1/K$, which we denote by $\Omega$.
 Since $\partial\Omega\subset \p(\R/2\pi\Z)$, the boundary $\partial\Omega$ is $C^1$ except at multiple points of the $C^1$ curve $\p$. 
We distinguish two types of singular points: a singular point $z\in\partial\Omega$ is of type I if there exists $\delta>0$ such that all connected components $\omega_\delta$ of $\Omega\cap B_\delta$ are such that $\partial\omega_\delta\cap\partial\Omega$ is $C^1$, and of type II otherwise. See Figure \ref{f:singularities}.
 The rest of the proof is divided in 4 steps.

\begin{figure}[h]
\centering
\def\svgwidth{0.45\columnwidth}
\input{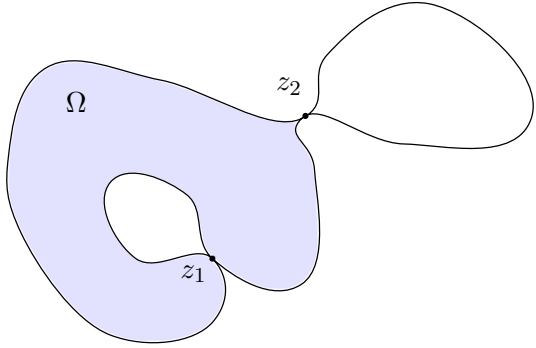}
\caption{\small The point $z_1$ is a type I singularity, while $z_2$ is a type II singularity.}
\label{f:singularities}
\end{figure}

\medskip
\noindent{\bf Step 1.} Singular points of type II are isolated in $\partial\Omega$.

\medskip

Let $z_0\in\partial\Omega$ a singular point. In particular it is a multiple point: $\p^{-1}(\lbrace z_0\rbrace)$ contains strictly more than one element. 
If $t_1\in\p^{-1}(\lbrace z_0\rbrace)$, then   $\p(t)\neq z_0$ for $|t-t_1|<2\pi/K$ because $|\dot\p|=1$ and  $|\ddot\p|\leq K$. This implies that $\p^{-1}(\lbrace z_0\rbrace)$ is finite, $\p^{-1}(\lbrace z_0\rbrace)=\lbrace t_1,\ldots,t_N\rbrace$ for some $N\geq 2$. Since $\p$ cannot self-cross, we have $\dot\p(t_j)\in\lbrace \pm\tau\rbrace$ for all $j=1,\ldots,N$ and one fixed $\tau=\dot\p(t_1)\in\mathbb S^1$.

We consider, for $\delta>0$ small enough, the open set $\p^{-1}(B_\delta(z_0))$. From a Taylor expansion around each $t_j$, we deduce the existence of $\eta=\eta(K)>0$ such that for $j=1,\ldots,N$, the subset $\p^{-1}(B_\delta(z_0))\cap (t_j-\eta,t_j+\eta)$ is an open interval $I_\delta^j$. For small enough $\delta>0$, the open set $\p^{-1}(B_\delta(z_0))$ is exactly the union of these $N$ open intervals. 
Maybe I would give for granted this proof, but since it is already there, we can also keep it. We prove this by contradiction: otherwise, there would exist a sequence $\delta\to 0$ and $t_\delta$ such that $|\p(t_\delta)-z_0|<\delta$ but $t_\delta\notin I_j^\delta$ for any $j=1,\ldots,N$. 
Extracting a subsequence $t_\delta\to t_*$, we must have $\p(t_*)=z_0$, so $t_*\in\lbrace t_1,\ldots,t_N\rbrace$, 
and therefore $t_\delta\in (t_j-\eta,t_j+\eta)$ for some $j\in\lbrace 1,\ldots,N\rbrace$ and all small enough $\delta$. 
By the above property of $\eta$ this implies $t_\delta\in I_\delta^j$ and gives a contradiction, so $\p^{-1}(B_\delta(z_0))$ is indeed the union of the $N$ intervals $I_\delta^j$.

 We choose coordinates $(x,y)$ in which $z_0=0$ and $\tau=e_1$. 
Then for small enough $\delta>0$ we have 
\begin{align*}
\p(\R/2\pi\Z)\cap B_\delta =\bigcup_{j=1}^N \lbrace y=f_j(x) \rbrace \cap B_\delta,
\end{align*}
for some $C^1$ functions $f_1\leq \cdots\leq f_N$ such that $f_j(0)=f_j'(0)=0$ for $j=1,\ldots,N$. 
We have $z_0=0\in\partial\Omega$, and any $(x_0,y_0)\in B_{\delta}\cap\Omega$ must satisfy
$y_0\notin\lbrace f_1(x_0),\ldots,f_N(x_0)\rbrace$.
Moreover, if $f_j(x_0)<y_0<f_{j+1}(x_0)$ for some $j\in\lbrace 1,\ldots,N-1\rbrace$, 
we deduce that $f_j(x)<f_{j+1}(x)$ for all $x\in (0,x_0)$
since $\Omega$ is connected.
Therefore, possibly taking a smaller value of $\delta$, the connected components of 
$\Omega\cap B_\delta$ are among the sets
\begin{align*}
&\lbrace y<f_1(x)\rbrace\cap B_\delta, \\
& \lbrace y> f_N(x)\rbrace\cap B_\delta,\\
&\lbrace f_j(x)<y<f_{j+1}(x)\rbrace\cap \lbrace x>0\rbrace \cap B_\delta,
\\
&\lbrace f_j(x)<y<f_{j+1}(x)\rbrace\cap \lbrace x<0\rbrace \cap B_\delta.
\end{align*}
Note that the singular point $z_0=0$ is of type I if and only if the two last types of connected components do not arise. Moreover, if $z_0$ is of type II, this description of $\Omega\cap B_\delta$ shows that $\partial\Omega\cap B_\delta$ contains no other singular points of type II. This proves Step 1.

\medskip
\noindent{\bf Step 2.} There exists $m\colon\Omega\to \mathbb S^1$ such that $\nabla\cdot m=0$ in $\Omega$, $\nu(\Omega)=0$, with a strong $L^1$ trace $m_{\lfloor\partial\Omega}$ satisfying $m_{\lfloor\partial\Omega}\cdot n_{\partial\Omega}=0$ a.e. on $\partial\Omega$. 
More precisely, this makes sense in any $C^1$ portion of $\partial\Omega$, singular points of type II are negligible by Step 1, and around a singular point of type I, $\Omega$ is, in adapted coordinates, locally of the form $\lbrace y<f(x)\rbrace\cup \lbrace y>\tilde f(x)\rbrace$ for some $C^1$ functions $f\leq \tilde f$ with $f(0)=\tilde f(0)=f'(0)=\tilde f'(0)=0$, and the trace $m_{\partial\Omega}$ may differ from one side to another, but both traces satisfy $m\cdot n_{\partial\Omega}=0$ a.e. for any choice of unit normal $n_{\partial\Omega}$.

\medskip

For any $z\in \Omega$ and $B_r(z)\subset\Omega$, the sequence $m_{k\lfloor D_r(z)}$ has bounded entropy production and is therefore compact in $L^1(D_r(z))$ \cite{dkmo01}, so $m_{k\lfloor\Omega}$ is compact in $L^1_{\mathrm{loc}}(\Omega)$. After extracting a subsequence converging in $L^1_{loc}$ and a.e., its limit   $m\colon\Omega\to\mathbb S^1$ satisfies $\nabla\cdot m=0$ in $\Omega$ and $\nabla\cdot\Phi(m)=0$ for any entropy $\Phi$ (see Appendix~\ref{a:ent}), so  $\nu(\Omega)=0$.
This last property implies that $m$ has an $L^1$ trace along any $C^1$ portion of $\partial\Omega$, and along both sides of any portion of $\partial\Omega$ around a singular point of type I  (see e.g. \cite[\S~3.2]{JOP}). 
It remains to prove that this trace satisfies $m_{\lfloor\partial\Omega}\cdot n_{\partial\Omega}=0$ a.e. on $\partial\Omega$. 

To that end consider first a $C^1$ point $z_0\in\partial\Omega$, and a disk  $B_{2r}(z_0)$ such that $\partial\Omega\cap B_{2r}(z_0)=\p(I)$ for some open interval $I$. Possibly choosing a smaller $r$ and adapted coordinates $(x,y)$ in which $z_0=0$ and $\p(I)$ is close to horizontal, we write $\Omega\cap B_{2r}$ as a subgraph
\begin{align*}
\Omega\cap B_{2r}=\lbrace y<f(x)\rbrace \cap B_{2r},
\end{align*}
where $f(0)=f'(0)=0$ and $f$ is $C^1$.
 Since $\p_k\to\p$ in $C^1$, we can write $\p_k(I)$ as a graph $\lbrace y=f_k(x)\rbrace$ for some functions $f_k$ converging to $f$ in $C^1$, and define
 \begin{align*}
 \widetilde\Omega_k = \lbrace y<f_k(x) \rbrace \cap B_r.
 \end{align*}
 Then we have $\widetilde\Omega_k\subset\Omega_k$ and $B_r\cap\partial\widetilde\Omega_k \subset \partial\Omega_k$, so $m_{k}\cdot n_{\partial\widetilde\Omega_k}=0$ on $B_r\cap\partial\widetilde\Omega_k \subset \partial\Omega_k$, implying that $\nabla\cdot (m_k\mathbf 1_{\widetilde\Omega_k})=0$ in $B_r$. 
By dominated convergence we have 
  $m_k \mathbf 1_{\widetilde\Omega_k} \to m\mathbf 1_{\Omega\cap B_r}$ in $L^1(B_r)$. We deduce that $\nabla\cdot (m\mathbf 1_{\Omega})=0$ in $B_r$, which implies that $m_{\lfloor\partial\Omega}\cdot n_{\partial\Omega}=0$ a.e. on $\partial\Omega\cap B_r$. 
 This is valid around any $C^1$ point of $\partial\Omega$.
Around a singular point $z_0$ of type I, the same argument can be applied in both connected components of $\Omega\cap B_{2r}(z_0)$.
And singular points of type II are isolated by Step~1, so $m_{\lfloor\partial\Omega}\cdot n_{\partial\Omega}=0$ a.e. on $\partial\Omega$. 

\medskip
\noindent{\bf Step 3.} There are no singular points of type II.
\medskip

If $z_0\in\partial\Omega$ is a singular point of type II, then by the analysis in Step~1 we may choose coordinates $(x,y)$ in 
 which $z_0=0$ and  there exist $\delta>0$ and a connected component $\omega_\delta$ of $\Omega\cap B_\delta$ such that
\begin{align*}
\omega_\delta=\lbrace f_1(x)<y<f_2(x) \rbrace\cap \lbrace x>0\rbrace\cap B_\delta,
\end{align*}
where $f_1,f_2$ are $C^1$ functions such that $f_1(0)=f_2(0)=f_1'(0)=f_2'(0)=0$ and $f_1(x)<f_2(x)$ for $x>0$.
For all small $x>0$, the normal $N_{f_1,x}$ to the graph of $f_1$ at $x$ intersects the graph of $f_2$, at a point $(x',f_2(x'))$. There must be at least a value of $x$ at which 
the normal $N_{f_2,x'}$ to the graph of $f_2$ at $x'$ is not parallel to $N_{1,x}$: otherwise the distance from $(x,f_1(x))$ to the graph of $f_2$ would be a constant function of $x$, contradicting the fact that the two graphs intersect at $0$.
Therefore, considering $x''$ slightly larger or smaller than $x'$ (depending on the sign of the angle between the two normals $N_{f_1,x}$ and $N_{f_2,x'}$) 
we have that $N_{f_2,x''}$ intersects $N_{f_1,x}$ at a point $z_1\in\omega_\delta$.
The proof of   \cite[Theorem~1.2]{JOP} implies that, for   every point $z\in\partial\Omega$ such that $n_{\partial\Omega}(x)$ is defined and $[z_1,z]\setminus\lbrace z\rbrace$ is contained in $\Omega$, $n_{\partial\Omega}(z)$ must be equal to $(z-z_1)/|z-z_1|$. 
By a continuation argument, we deduce that the graphs of $f_1$ and $f_2$ are, until they meet, arcs of circles centered at $z_1$, but this contradicts the fact that they have the same tangent at their intersection point. This contradiction proves that 
$\partial\Omega$ contains no singular points of type II.

\medskip
\noindent{\bf Step 4.} Conclusion.

\medskip

If there are no singular points of type I,
 $\Omega$ is $C^1$ with bounded curvature  and \cite[Proposition~2.1]{howard-treibergs} ensures the existence of a tangent inscribed disk centered at a focal point: explicitly we have a disk $B_r(z_*)\subset\Omega$ and a tangency point   $z_b=\p(t_0)\in\partial B_r(z_*)\cap \partial\Omega$, such that for any $\e>0$ and $z_\e=z_* -\e(z_b-z_*)$, the function $t\mapsto |\p(t)-z_\e|$ doesn't have a local minimum at $t_0$. 
In fact it can be checked that the proof of  \cite[Proposition~2.1]{howard-treibergs} works even if $\Omega$ has singular points of type I. (One may also approximate $\Omega$ with $C^1$ domains, apply \cite[Proposition~2.1]{howard-treibergs} and pass to the limit.)
So we do have
 a disk $B_r(z_*)\subset\Omega$ and a tangency point   $z_b=\p(t_0)\in\partial B_r(z_*)\cap \partial\Omega$, such that for any $\e>0$ and $z_\e=z_* -\e(z_b-z_*)$, the function $t\mapsto |\p(t)-z_\e|$ doesn't have a local minimum at $t_0$. 
The derivative of that function cannot be nondecreasing near $t_0$, and this implies the existence of $t\neq t_0$ arbitrarily close to $t_0$ such that the normal line to $\partial\Omega$ at $\p(t)$ intersects $[z_\e,z_b]$. 
Denote by $z_1\in\Omega$ one such intersection point. 
Applying again the argument in the proof of \cite[Theorem~1.2]{JOP}, we have the following geometric property: for every point $z\in\partial\Omega$ such that  $[z_1,z]\setminus\lbrace z\rbrace$ is contained in $\Omega$, $n_{\partial\Omega}(z)$ must be equal to $(z-z_1)/|z-z_1|$. 
Let $I\subset\R$ be the connected component of $t_0$ in the open set of all $t\in\R$ such that  $[z_1,\p(t)]\setminus \lbrace\p(t)\rbrace$ is contained in $\Omega$. Thanks to the above, we deduce that $\p(I)$ is an arc of   circle  centered at $z_1$. Assume $I$ doesn't coincide with $\R$, it means that there is a half-line from $z_1$ which intersects $\partial\Omega$ for the first time tangentially, but this is impossible by the above geometric property. We conclude that $I=\R$ and $\partial\Omega=\p(\R/2\pi\Z)$ is a circle $\partial B_R(z_1)$. Since $\p$ has length $2\pi$ we must have $R=1$, hence $\Omega=B_1(z_1)$. Further, from \cite[Theorem~1.2]{JOP} there exists $\alpha\in\lbrace \pm 1\rbrace$ such that
\begin{align*}
m(x)=\alpha i\frac{x-z_1}{|x-z_1|}=\alpha \tau_{\partial\Omega}\circ\pi_{\partial\Omega}(x)\qquad\text{for a.e. }x\in\Omega.
\end{align*}
Therefore, the  convergence of $\p_k$ to $\p$ in $C^1(\R/2\pi\Z;\R^2)$ implies
\begin{align*}
\dist(\partial\Omega_k,\partial B_1(z_1)) + \sup_{x\in\partial\Omega_k}\left| n_{\partial\Omega_k}(x)-\frac{x-z_1}{|x-z_1|}\right|\longrightarrow 0,
\end{align*}
and also that $\pi_{\partial\Omega_k}(x)$ is well defined for $\dist(x,\partial\Omega_k)\leq 1/(2K)$ and large enough $k$.
Moreover by dominated convergence we have 
\begin{align*}
\int_{\Omega_k}\mathbf 1_{\dist(\cdot,\partial\Omega_k)\leq \frac {1}{2K}}|m_k-\alpha\tau_{\partial\Omega_k}\circ\pi_{\partial\Omega_k}|\,dx
\longrightarrow 0.
\end{align*}
This contradicts the assumptions on $(\Omega_k,m_k)$ and concludes the proof of Lemma~\ref{l:comp}.
\end{proof}

\section{Trace estimate}\label{s:trace}

The estimate \eqref{e:trace_diffuse} provided by the compactness argument is not enough to handle the trace term in Proposition~\ref{p:adissiptrace}. In this section we explain how to strengthen it to a quantitative trace estimate, using the Lagrangian representation introduced in \S~\ref{ss:lag}.

\begin{prop}\label{p:trace}
Let $\Omega\subset \R^2$ be a $C^{1,1}$ simply connected domain with $\mathcal H^1(\partial\Omega)=2\pi$, $\sup_{\partial\Omega}|\kappa|\leq K$. For any map $m$ solving \eqref{eq:eik} and \eqref{eq:kin}, there is $\alpha \in \{\pm 1\}$ for which
\begin{align}\label{e:trace}
\mathcal H^1( \{ x \in \partial \Omega : m(x) = - \alpha \tau (x) \} ) \le C \nu (\Omega),
\end{align}
where $C>0$ is a constant depending only on $K$.
\end{prop}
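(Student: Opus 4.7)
The plan is to combine a trace version of the compactness argument of Section~\ref{s:comp} with a Lagrangian-based local lower bound in the spirit of Section~\ref{s:dissip}. First I would reduce to the case $\nu(\Omega) \le \delta_0$ for some small $\delta_0 = \delta_0(K) > 0$ (otherwise \eqref{e:trace} is trivial by the bound $\mathcal H^1(\cdot) \le 2\pi$), and apply a trace-level variant of Lemma~\ref{l:comp}: for $\nu(\Omega)$ sufficiently small, there exists $\alpha \in \{\pm 1\}$ with $\mathcal H^1(W_\alpha) \le \e'$, where $W_\alpha := \{x \in \partial\Omega : m(x) = -\alpha\tau(x)\}$ and $\e' = \e'(K)$ can be chosen as small as needed. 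This is obtained via the same contradiction scheme as Lemma~\ref{l:comp}: any sequence $(\Omega_k, m_k)$ with $\nu_{m_k}(\Omega_k) \to 0$ and $\min_\alpha \mathcal H^1(W_\alpha) \ge \e'$ would converge to a zero-energy state on a disk whose trace is $\pm\tau$ everywhere, a contradiction. After WLOG fixing $\alpha = -1$, the task reduces to $\mathcal H^1(W) \lesssim \nu(\Omega)$ for $W := \{m = \tau\}$, a priori of small measure.

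Next, I would fix $T \ge 3\pi$ and apply Proposition~\ref{p:lagrangiandissipation} to obtain a Lagrangian representation $\omega$ and define the boundary density $h \colon \partial\Omega \to [0,\infty)$ as in \eqref{eq:h}, which satisfies the global bound $\int_{\partial\Omega} h\, d\mathcal H^1 \le T\nu(\Omega)$. It then suffices to establish the local-to-global lower bound $\mathcal H^1(W) \lesssim \int_W h\, d\mathcal H^1$.

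The core is a chord-symmetry argument. Let $\phi(x)$ denote the tangent angle at $x \in \partial\Omega$, and for a small $v_* = v_*(K) > 0$ let $\mathcal B(x) := \{s \in \R/2\pi\Z : s - \phi(x) \in (v_*, \pi/2 - v_*)\}$ be the \emph{deep bad cone}. By Lemma~\ref{l:projinit} the $\omega$-mass of $\Gamma_W := \{\gamma : \gamma_x(t_\gamma^-) \in W,\ \gamma_s(t_\gamma^-) \in \mathcal B(\gamma_x(t_\gamma^-))\}$ equals $c_1 T \mathcal H^1(W)$ with $c_1 := \int_{v_*}^{\pi/2-v_*}\sin v\, dv > 0$. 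The central claim is that, once $\mathcal H^1(W) \le \e'$ is small enough, $|\mu_\gamma|(I_\gamma) \ge c_0(K,T,v_*) > 0$ for $\omega$-a.e.\ $\gamma \in \Gamma_W$ outside an exceptional subset of mass $\lesssim T \mathcal H^1(W)^2$. The underlying reason is the \emph{chord-symmetry in the unit disk}: the straight-line extension of such a trajectory starting at $x_0 \in \partial B_1$ in direction $s_0 = \phi(x_0) + v$ exits at $\tilde y \in \partial B_1$ with tangent angle $\phi(\tilde y) = \phi(x_0) + 2v$, so that $\tau(\tilde y) \cdot e^{is_0} = \tau(x_0) \cdot e^{is_0} = \cos v \ge \sin v_*$, which is incompatible with $m(\tilde y) = -\tau(\tilde y)$ (the good trace); in the near-disk regime this identity persists up to controlled errors. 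Consistency of the actual trajectory $\gamma$ then forces dissipation in one of two ways: if the exit $y \in W^c$, then $\tau(y)\cdot e^{i\gamma_s(t_\gamma^+)} \le 0$ combined with the BV displacement estimates $|y - \tilde y| \le T|\mu_\gamma|(I_\gamma)$ and $|\gamma_s(t_\gamma^+)-s_0| \le |\mu_\gamma|(I_\gamma)$ yields $|\mu_\gamma|(I_\gamma) \gtrsim (1+KT)^{-1}\sin v_*$; if $y \in W$, then $T|\mu_\gamma|(I_\gamma) \ge \dist(\tilde y, W)$. A Fubini-in-$v$ argument, exploiting the translation structure $\phi(\tilde y) = \phi(x_0) + 2v$ of the chord-exit map, then shows that the $\omega$-mass of $\gamma \in \Gamma_W$ with $\dist(\tilde y, W) \le c_*$ is $\lesssim T\mathcal H^1(W)^2$, hence negligible compared to $c_1 T\mathcal H^1(W)$ once $\mathcal H^1(W) \le \e'$.

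Integrating the resulting uniform lower bound over the majority of $\Gamma_W$,
\[
\frac{c_0 c_1}{2}\, T\, \mathcal H^1(W) \;\le\; \int_{\{\gamma_x(t_\gamma^-) \in W\}} |\mu_\gamma|(I_\gamma)\, d\omega \;=\; \int_W h\, d\mathcal H^1 \;\le\; T\,\nu(\Omega),
\]
which gives $\mathcal H^1(W) \lesssim \nu(\Omega)$, as required. The main obstacle will be making the chord-symmetry argument quantitative in the $C^{1,1}$ near-disk setting: controlling simultaneously the deviation of the straight-line exit map from its disk counterpart, the BV-based displacement bounds, and the Fubini-in-$v$ bookkeeping that uses the a priori smallness of $\mathcal H^1(W)$ to exclude trajectories whose straight-line exit happens to land back in $W$. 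A secondary technical point will be the passage, in the initialization step, from the $L^1$ neighborhood estimate of Lemma~\ref{l:comp} to the genuine trace smallness $\mathcal H^1(W_\alpha) \le \e'$.
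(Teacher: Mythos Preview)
Your overall architecture matches the paper's: reduce to small $\nu(\Omega)$, use compactness to initialize, then show via the Lagrangian representation that trajectories entering from the wrong-trace set $W$ must dissipate a fixed amount. The core dissipation argument, however, is genuinely different, and the version you propose has a gap.

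The paper does \emph{not} follow the trajectory to its exit point. Instead, it argues locally near the entry: for a curve $\bar\gamma$ entering at a wrong-trace point in a direction at angle roughly $\pi/4$ from $\tau$, one moves a fixed small distance $r=r(K)$ into $\Omega$ and applies the ``gain of dimension'' Lemma~\ref{l:gain_dimension} with $\beta=1/32$. That lemma yields either $\nu(B_r)\gtrsim \beta^3 r$ (contradicting $\nu(\Omega)<\delta$), or a set $A\subset B_r$ of measure $\gtrsim \beta r^2$ on which $m\cdot e^{i\bar\gamma_s(\bar t)}\ge -2\beta$. Since $e^{i\bar\gamma_s(\bar t)}\cdot\tau(\pi_{\partial\Omega}(x))\gtrsim 1$ on $A$, this forces $\int_A |m+\tau\circ\pi_{\partial\Omega}|\gtrsim \beta r^2$, contradicting the \emph{diffuse} estimate \eqref{e:trace_diffuse} from Lemma~\ref{l:comp}. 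No exit-point analysis, no Fubini-in-$v$, and no trace-level compactness is needed.

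Your chord-symmetry route has a concrete problem in the Fubini step. You claim the $\omega$-mass of $\gamma\in\Gamma_W$ with $\dist(\tilde y,W)\le c_*$ is $\lesssim T\,\mathcal H^1(W)^2$. Unpacking this via Lemma~\ref{l:projinit} gives
\[
T\int_W \mathcal L^1\big(\{v:\tilde y(x_0,v)\in W_{c_*}\}\big)\,d\mathcal H^1(x_0)
\;\lesssim\; T\,\mathcal H^1(W)\cdot\mathcal H^1(W_{c_*}),
\]
where $W_{c_*}$ is the $c_*$-neighbourhood of $W$ in $\partial\Omega$. But $\mathcal H^1(W_{c_*})$ is \emph{not} controlled by $\mathcal H^1(W)$: if $W$ consists of $N$ equispaced arcs of total length $\epsilon$, then $\mathcal H^1(W_{c_*})\sim\min(2\pi,\epsilon+2Nc_*)$, which can be of order $1$ for large $N$ even when $\epsilon$ is tiny. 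Since $c_*$ must be a fixed constant (it comes from the displacement bound $|y-\tilde y|\le T|\mu_\gamma|$ and the required lower bound on $|\mu_\gamma|$), you cannot absorb this, and nothing in the setup bounds the number of components of $W$. The alternative of working directly with the actual exit point $y\in W$ does not help either, because the joint entry/exit distribution under $\omega$ is not a product and you cannot factor $\omega(\{x_0\in W,\,y\in W\})$.

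A secondary issue is your ``trace-level variant'' of Lemma~\ref{l:comp}. The paper only proves the diffuse estimate \eqref{e:trace_diffuse}; upgrading $L^1$ convergence of $m_k$ to convergence of the boundary traces is not automatic (the diffuse bound $\int_{\mathrm{tube}}|m-\alpha\tau\circ\pi|\le\epsilon$ does not by itself bound $\mathcal H^1(W_\alpha)$). The paper's local argument is designed precisely to close against \eqref{e:trace_diffuse} rather than against a trace bound, which is why this upgrade is never needed.
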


 We start by showing a preliminary lemma, which is a more precise version of \cite[Lemma~3.1]{marconi21ellipse} 
 (see also \cite[Lemma~22]{marconi21micromag}).
 As in \cite[Lemma~2.7]{marconi21ellipse} we denote by $\Gamma_g\subset\Gamma$ the full measure set of curves $\gamma$ such that for a.e. $t\in (t_\gamma^-,t_\gamma^+)$ we have that $\gamma_x(t)$ is a Lebesgue point of $m$ with $m(\gamma_x(t))\cdot e^{i\gamma_s(t)}>0$.

\begin{lem}\label{l:gain_dimension}
Let $r>0$, $\bar \gamma \in \Gamma_g$ and $\bar t \in (t^-_{\bar \gamma}, t^+_{\bar \gamma})$ be such that $B_r(\bar \gamma_x(\bar t)) \subset \Omega$ and denote by $(t^-_r, t^+_r)$ the connected component of $\gamma_x^{-1}(B_r(\bar \gamma_x(\bar t)))$ in $ (t^-_{\bar \gamma}, t^+_{\bar \gamma})$ containing $\bar t$.
Then there exists an absolute constant $c>0$ such that for every $\beta \in \big(  \mathrm{Osc}_{(t^-_r,t^+_r)}\gamma_s, \pi/4\big)$ at least one of the following holds:
\begin{enumerate}
\item $\nu(B_r(\bar \gamma_x(\bar t))) \ge c \beta^3 r$;
\item $\mathcal L^2(\{x \in B_r(\bar \gamma_x(\bar t)) : e^{i\bar \gamma_s(\bar t)}\cdot m(x) \ge - 2 \beta \}) \ge c \beta r^2 $.
\end{enumerate}
\end{lem}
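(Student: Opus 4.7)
I would argue by contradiction, assuming both alternatives fail with a small absolute constant $c_*$ to be fixed, and aim at a contradiction on the $\mathcal L^3$-mass of the phase-space cylinder $\mathcal C:=B_{r/2}(\bar x)\times(\bar s-\beta,\bar s+\beta)$, where $\bar x:=\bar\gamma_x(\bar t)$ and $\bar s:=\bar\gamma_s(\bar t)$. The first ingredient is a purely geometric upper bound. Expanding $e^{is}$ in the basis $(e^{i\bar s},ie^{i\bar s})$ and using $\beta<\pi/4$, one verifies $m(x)\cdot e^{i\bar s}<-2\beta \Rightarrow m(x)\cdot e^{is}\le -2\beta\cos\beta+\sin\beta<0$ for every $s\in(\bar s-\beta,\bar s+\beta)$. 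Consequently
\[
\mathcal L^3(E_m\cap\mathcal C)\ \le\ 2\beta\,\mathcal L^2(A),\qquad A:=\{x\in B_{r/2}(\bar x):\ m(x)\cdot e^{i\bar s}\ge -2\beta\},
\]
so that the negation of (2) translates to $\mathcal L^3(E_m\cap\mathcal C)\le 2c_*\beta^2 r^2$.

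On the Lagrangian side, I would use Proposition~\ref{p:lagrangiandissipation} with $I:=(\bar t-r/4,\bar t+r/4)$ and $B_r(\bar x)$: under the negation of (1), $\int_\Gamma \mu_\gamma(\{t\in I: \gamma_x(t)\in B_r(\bar x)\})\,d\omega\le c_*\beta^3 r^2/2$. Any curve $\gamma$ with $\gamma_x(\bar t)\in B_{r/4}(\bar x)$ stays in $B_r(\bar x)$ throughout $I$ by the characteristic speed constraint $|\dot\gamma_x|=1$, so a Chebyshev argument bounds by $\lesssim c_*\beta^2 r^2$ the $\omega$-mass of such curves with oscillation $\ge\beta/2$ on $I$. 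The complementary ``good'' curves, starting at time $\bar t$ in the inner cylinder $\mathcal C':=B_{r/4}(\bar x)\times(\bar s-\beta/2,\bar s+\beta/2)$, remain in $\mathcal C$ throughout $I$ by the bound on $\mathrm{Osc}\,\gamma_s$. Integrating $\mathcal L^1(\{t\in I:\gamma(t)\in\mathcal C\})$ against $\omega$ over these good curves and equating with the time-integrated Lagrangian identity $(e_t)_\sharp\omega|_{\Gamma(t)}=\mathbf 1_{E_m}\mathcal L^3$ yields
\[
\mathcal L^3(E_m\cap\mathcal C')-O(c_*\beta^2 r^2)\ \lesssim\ \mathcal L^3(E_m\cap\mathcal C).
\]

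The main obstacle will be that $\mathcal C'\subset\mathcal C$ makes the above comparison essentially tautological without an \emph{independent} lower bound $\mathcal L^3(E_m\cap\mathcal C')\gtrsim\beta^2 r^2$. To supply such a bound I would use $\bar\gamma$ itself as an anchor, following the packet strategy of Lemma~\ref{l:local_lower_bound}: parametrize curves by an entry through a small screen transverse to $e^{i\bar s}$ placed near $\bar\gamma(\bar t-r/4)$, and compute the incoming Lagrangian flux through that screen via an interior analogue of Lemma~\ref{l:projinit}. The assumption $\bar\gamma\in\Gamma_g$ guarantees that $\bar\gamma$ crosses the screen with strictly positive transverse velocity and that $\mathbf 1_{E_m}$ has density one at its crossing point, allowing one to localize at scale $(\beta r,\beta)$ in screen-coordinates and extract a family of $\omega$-mass $\gtrsim\beta^2 r^2$ whose free-characteristic extensions sweep through $\mathcal C$. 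Combined with the geometric upper bound from the first paragraph, this forces a contradiction once $c_*$ is chosen small enough absolute, establishing alternative (2).
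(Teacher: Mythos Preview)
Your upper bound $\mathcal L^3(E_m\cap\mathcal C)\le 2\beta\,\mathcal L^2(A)$ is correct, and the overall architecture (contradict a mass lower bound against this upper bound using the dissipation hypothesis) is sound. The gap lies in the last paragraph, where you try to manufacture the lower bound. The property $\bar\gamma\in\Gamma_g$ tells you that for a.e.~$t$ the point $\bar\gamma_x(t)$ is a Lebesgue point of $m$; this is an \emph{asymptotic} statement as the radius shrinks to $0$, and gives no control of $\mathbf 1_{E_m}$ on a transverse segment of length $\beta r$ through one such point. In particular, ``density one at the crossing point'' is density in the $\mathcal L^2$ sense, not in the $\mathcal H^1$ sense along your screen, and in any case not uniformly at scale $\beta r$. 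So the localization ``at scale $(\beta r,\beta)$'' cannot be justified, and the claimed packet of $\omega$-mass $\gtrsim\beta^2 r^2$ does not follow.

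The paper's proof resolves this by taking the trajectory $\mathcal C_{\bar\gamma}=\bar\gamma_x((t_r^-,t_r^+))$ itself as the screen. The point is that $\Gamma_g$ gives you $m(\bar\gamma_x(t))\cdot e^{i\bar\gamma_s(t)}>0$ for a.e.~$t$, i.e.\ pointwise knowledge of $m$ along a curve of length $\sim r$, not just at one point; combined with $\mathrm{Osc}\,\bar\gamma_s<\beta$, this forces $m\cdot e^{is}e^{i\bar s}>0$ on at least half of $\mathcal C_{\bar\gamma}\cap B_{r/2}(\bar x)$ for every $s\in(5\beta/4,7\beta/4)$ (or the symmetric interval). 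One then counts crossings $N(\gamma)$ of generic Lagrangian curves with $\mathcal C_{\bar\gamma}$ in that angular window, proves the per-curve inequality $\mu_\gamma(I(\gamma))/\beta+T(\gamma)/r\gtrsim N(\gamma)$, and integrates: the interior flux formula across $\mathcal C_{\bar\gamma}$ yields $\int N\,d\omega\gtrsim T\beta^2 r$, while the left-hand side is bounded by $T\nu(B_r)/\beta+T\beta\,\mathcal L^2(A)/r$. This is exactly the missing lower bound your argument needs, but it requires using $\bar\gamma$ longitudinally rather than a transverse segment anchored at a single Lebesgue point.
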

\begin{proof}[Proof of Lemma~\ref{l:gain_dimension}]
We let $\bar x = \bar \gamma_x(\bar t)$, $\bar s=\bar\gamma_s(\bar t)$, and  $\mathcal C_{\bar\gamma}$ be the image curve  $\mathcal C_{\bar\gamma}=\bar\gamma_x((t_{r}^-,t_{r}^+))\subset\Omega$. For $x=\bar\gamma(t)\in\mathcal C_{\bar\gamma}$ we denote by $\tau_{\bar\gamma}(x)=\dot {\bar\gamma}_x(t)$ the unit tangent vector determined by the parametrization $\bar\gamma$. In particular, $\tau_{\bar\gamma}(\bar x)=e^{i\bar s}$.

For $\mathcal H^1$-a.e. $x \in  \mathcal C_{\bar \gamma} \cap B_{r/2}(\bar x)$ we have $m(x) \cdot \tau_{\bar\gamma} (x) \ge 0$, therefore recalling that $\beta \in    \big(  \mathrm{Osc}_{(t^-_r,t^+_r)}\gamma_s, \pi/4\big)$ one of the following holds:
\begin{align*}
&m(x) \cdot e^{is} e^{i\bar s} >0 \quad \forall s \in \left(\frac{5\beta}{4}, \frac{7\beta}4\right),\\
\text{or }\quad & m(x) \cdot e^{is} e^{i\bar s} >0 \quad \forall s \in \left(-\frac{7\beta}{4}, -\frac{5\beta}4\right).
\end{align*}
One of these two conditions must be satisfied for at least half the points in $\mathcal C_{\bar \gamma} \cap B_{r/2}(\bar x)$, and we assume without loss of generality that 
\begin{align}\label{eq:tracegammabar}
\mathcal H^1\left( \left\{ x \in \mathcal C_{\bar \gamma} \cap B_{r/2}(\bar x) :  m(x) \cdot e^{is} e^{i\bar s} >0 \ \forall s \in \left(\frac{5\beta}{4}, \frac{7\beta}4\right) \right\}   \right) \ge \frac{r}2.
\end{align}
We define
\begin{align*}
I(\gamma) & = \{ t \in (0,T): \gamma_x(t) \in  B_{r}(\bar x) \}, \\
 I'(\gamma)  & = \{ t\in I(\gamma) : \gamma_s(t) \in (\bar s+\beta, \bar s+2 \beta)\}, 
\end{align*}
and  $ T(\gamma) = \mathcal L^1(I'(\gamma))$.
We moreover consider
\begin{align}\label{eq:Ngamma}
 N(\gamma)= \#  \left\{ t \in (0,T) : \gamma_x(t) \in \mathcal C_{\bar \gamma} \cap B_{r/2}(\bar x), \gamma_s(t) \in \left( \bar s  +\frac{5\beta}{4}, \bar s + \frac{7\beta}4\right) \right\}.
\end{align}
This cardinal is finite for $\omega$-a.e. $\gamma\in\Gamma$ thanks to \cite[Proposition~3.3]{CHLM22},
and we denote by $  t_1(\gamma)< \ldots <   t_{  N(\gamma)}(\gamma)$ the elements of the above set.
We show that for every $\gamma \in \Gamma$ we have
\begin{equation}\label{e:gamma_fixed}
\frac{\mu_\gamma(I(\gamma))}\beta + \frac{T(\gamma)}r \ge \frac14  N(\gamma),
\end{equation}
where $\mu_\gamma=|D_t\gamma_s|\in\mathcal M (t_\gamma^-,t_\gamma^+)$ can be interpreted as the entropy dissipation along $\gamma$ thanks to Proposition~\ref{p:lagrangiandissipation}.
It follows from the characteristic equation \eqref{e:charac} that for every $i =1,\ldots  N(\gamma)$ there is a neighbourhood $I_i$ of $ t_i(\gamma)$ of size at least $r/2$ such that $I_i\subset I(\gamma)$ and at least one of the following holds:
\[
I_i \subset I'(\gamma) \qquad \mbox{or} \qquad \mu_\gamma(I_i) \ge \frac{\beta}4.
\]
The neighborhoods $I_i$ are not necessarily disjoint, but if $i=1,\ldots  N(\gamma)-1$ is such that $ t_{i+1}(\gamma)-  t_i(\gamma) < r$, then \eqref{e:charac} implies that  $[ t_i(\gamma),   t_{i+1}(\gamma)]\subset I(\gamma)$ and $\mu_\gamma([ t_i(\gamma), t_{i+1}(\gamma)]) \ge \beta/4$.
This establishes  \eqref{e:gamma_fixed}.

Next we integrate \eqref{e:gamma_fixed} with respect to $\omega$. From Proposition~\ref{p:lagrangiandissipation} we deduce
\[
\int_\Gamma \mu_\gamma(I(\gamma)) \, d \omega \le \nu (B_r(\bar x)) T,
\]
and from the Lagrangian property \eqref{E_repr_formula} we infer
\begin{align*}
\int_\Gamma T(\gamma) \, d \omega  &\le  T \mathcal L^3(\{ (x,s) \in B_r(\bar x) \times (\bar s+ \beta, \bar s+ 2 \beta) \colon m(x) \cdot e^{is} >0 \}) \\
& \le  T \beta \mathcal L^2(\{ x \in B_r(\bar x) \colon m(x) \cdot e^{i\bar s} >-2 \beta\}).
\end{align*}
Therefore integrating \eqref{e:gamma_fixed} we obtain
\begin{align}\label{eq:intNup}
\frac 1T \int_\Gamma N(\gamma) \, d\omega  \leq \frac{4}{\beta}\nu(B_r(\bar x))  + \frac{4\beta}{r} 
\mathcal L^2(\{ x \in B_r(\bar x) : m(x) \cdot e^{i\bar s} >-2\beta\}).
\end{align}
To estimate from below the left-hand side of \eqref{eq:intNup} we use its link with the Lagrangian flux across the curve $\mathcal C_{\bar \gamma}$. Specifically,
for any $f\in C_c^1((0,T)\times\Omega\times \R/2\pi\Z)$, we have
\begin{align*}
&\int f(t,x,s) \mathbf 1_{m(x)\cdot e^{is}>0}(i\tau_{\bar\gamma}(x)\cdot e^{is})\, ds\, d\mathcal H^1_{\lfloor \mathcal C_{\bar\gamma}}(x)\, dt 
=\int_\Gamma \langle F_\gamma,f\rangle\, d\omega(\gamma),
\end{align*}
where $F_\gamma$ is given by
\begin{align*}
\langle F_\gamma,f\rangle &=\sum_{t\in X^+}f(t,\gamma_x(t),\gamma_s(t^+)) -\sum_{t\in X^-}f(t,\gamma_x(t),\gamma_s(t^-)) \\
X^+&=\left\lbrace 
t\in (t_\gamma^-,t_\gamma^+)\colon \gamma_x(t)\in \mathcal C_{\bar\gamma},\; i\tau_{\bar\gamma}(\gamma_x(t))\cdot e^{i\gamma_s(t)}>0 
\right\rbrace  \\
X^-&=\left\lbrace t\in (t_\gamma^-,t_\gamma^+)\colon \gamma_x(t)\in \mathcal C_{\bar\gamma},\; i\tau_{\bar\gamma}(\gamma_x(t))\cdot e^{i\gamma_s(t^-)}<0 \right\rbrace.
\end{align*}
The set $X^+$ corresponds to intersection times of $\gamma$ with $\mathcal C_{\bar\gamma}$ where $\gamma$ exits $\mathcal C_{\bar\gamma}$ in direction of the normal $i\tau_{\bar\gamma}$, and the set $X^-$  to intersection times where $\gamma$ enters $\mathcal C_{\bar\gamma}$ in the opposite direction. Note that these two sets may not be disjoint since $\gamma$ could `bounce' on $\mathcal C_{\bar\gamma}$. 
The proof of this flux formula is similar to the proof of Lemma~\ref{l:projinit} for the boundary flux, and details are provided in \cite[Theorem~1.4]{CHLM22} in a very similar setting.
 Applying this flux formula to
\begin{align*}
f(t,x,s) \approx \mathbf 1_{t\in (0,T)}\mathbf 1_{x\in B_{r/2}(\bar x)}\mathbf 1_{s\in (\bar s+5\beta/4,\bar s+7\beta/4)},
\end{align*}
we see that there are no contributions from $X^-$ and obtain 
\begin{align*}
\int_\Gamma   N(\gamma) d\omega 
& =T\int \mathbf 1_{m(x)\cdot e^{is}>0}(i\tau(x)\cdot e^{is})\mathbf 1_{s\in [\bar s +\frac{5\beta}{4},\bar s +\frac{7\beta}{4}]} \,ds\, d\mathcal H^1_{\lfloor \mathcal C_{\bar\gamma} \cap B_{r/2}(\bar x)}(x).
\end{align*}
Using also \eqref{eq:tracegammabar} we deduce
\begin{align*}
\frac 1T \int_\Gamma   N(\gamma) d\omega 
&
 \geq    \sin\left(\frac{\beta}{4}\right)\frac{\beta}{2}
 \frac r2 \geq \frac{1}{8\pi} \beta^2r.
\end{align*}
Combining this with \eqref{eq:intNup} we get
\[
\frac{1}{32\pi}\beta^2 r  \le  \frac 1 \beta \nu(B_r(\bar x))  +  \frac{\beta}r \mathcal L^2(\{ x \in B_r(\bar x) : m(x) \cdot e^{i\bar s} >-2\beta\}) .
\]
This implies the statement of Lemma~\ref{l:gain_dimension}.
\end{proof}

With Lemma~\ref{l:gain_dimension} at hand, we turn to the proof of Proposition~\ref{p:trace}.

\begin{proof}[Proof of Proposition~\ref{p:trace}]
It is sufficient to prove the statement for $\nu (\Omega) < \delta$ for some small $\delta$. 
We choose $\alpha$ satisfying \eqref{e:trace_diffuse} and we prove that  \eqref{e:trace_diffuse} implies \eqref{e:trace}, provided $\delta$ is sufficiently small.
Assume without loss of generality that $\alpha = -1$ and let us consider the set of curves 
\begin{align*}
G=\Big\{\gamma \in \Gamma \colon 
&
0< t^-_\gamma < T-1,\  m(\gamma_x(t^-_\gamma)) = \tau(\gamma_x(\tau_\gamma^-)), \\
& \mbox{and }e^{i\gamma_s(t^-_\gamma)} \cdot  e^{i\frac\pi 4}\tau(\gamma_x(\tau_\gamma^-)) \ge \cos\left(\frac\pi{16}\right)
\Big\}.
\end{align*}
By Lemma \ref{l:projinit} we have 
\begin{align}\label{e:omegaG}
\omega(G) \ge \cos \left(\frac\pi4+ \frac\pi{16}\right) \frac{\pi (T-1)}{16} \mathcal H^1( \{ x \in \partial \Omega : m(x) =  \tau (x) \} ).
\end{align}

\noindent {\bf Claim}. If $\nu(\Omega)$ is sufficiently small, then   $\omega$-a.e. $\gamma\in G$ satisfies $\mu_\gamma((t^-_\gamma, t^+_\gamma))\ge \frac1{32}$. 

\medskip

The Claim implies the statement since 
\[
T \nu(\Omega) \ge \int_G \mu_\gamma((t^-_\gamma, t^+_\gamma)) d\omega \ge \frac{\omega(G)}{32}
\]
and eventually \eqref{e:trace} follows by \eqref{e:omegaG}.

It remains to prove the Claim. 
Let $\e >0$ small to be chosen later and assume $\nu(\Omega)<\delta$ where $\delta = \min\{\delta',\delta(\e,K)\}$ where $\delta(\e,K)$ is provided by Lemma \ref{l:comp} and $\delta'>0$ is chosen later.
Assume by contradiction that there is $\bar \gamma \in G\cap \Gamma_g$ such that $\mu_{\bar\gamma}((t^-_{\bar \gamma}, t^+_{\bar \gamma}))< \frac{1}{32}$. The constraints \eqref{e:dist_circle} and \eqref{e:extreme_time} imply that $t^+_{\bar \gamma}- t^-_{\bar \gamma} \ge \frac1K$. 
Moreover setting $\bar t = t^-_{\bar \gamma} + 4r$, and $\bar x = \bar \gamma_x(\bar t)$, we have that if we assume $r\leq 1/(100 K)$, then $B_{r}(\bar x) \subset \{x \in \Omega: \mathrm{dist}(x,\partial \Omega) < \frac1{2K}\}$. 
We can therefore apply Lemma \ref{l:gain_dimension} with $\beta = \frac{1}{32}$
and get that one of the following holds true:
\begin{enumerate}
\item $\nu(\Omega) \ge \nu(B_r(\bar x)) \ge c \beta^3 r$;
\item the set $A=\{x \in B_r(\bar x) \colon e^{i\bar \gamma_s(\bar t)}\cdot m(x) \ge - 2\beta \}$ satisfies 
\begin{align}\label{e:lowerboundA}
\mathcal L^2(A) \ge c \beta r^2.
\end{align}
\end{enumerate}
The first case is incompatible with $\nu(\Omega)<\delta'$, provided $\delta' <c \beta^3r$. Therefore we take $\delta' = \frac{c}2\beta^3 r$, with $r \leq 1/(100 K)$ to be fixed later. 

Let us then consider the second case: we are going to show that \eqref{e:lowerboundA} is contradicts \eqref{e:trace_diffuse} for $\e$ sufficiently small.
First we observe that every $x \in B_r(\bar x)$ satisfies
\[
\mathrm{dist}(\pi_{\partial \Omega}(x), \bar \gamma_x(t^-_{\bar \gamma})) \le \tilde c(1+K)r
\]
for some absolute constant $\tilde c>0$, and therefore
\begin{align*}
e^{i\bar \gamma_s(\bar t)}\cdot \tau(\pi_{\partial \Omega}(x)) 
&\ge  e^{i\bar \gamma_s(\bar t)}\cdot \tau ( \bar \gamma_x(t^-_{\bar \gamma})) - 
|  \tau ( \bar \gamma_x(t^-_{\bar \gamma})) -  \tau(\pi_{\partial \Omega}(x))| \\
&\ge  e^{i\bar \gamma_s(\bar t)}\cdot \tau ( \bar \gamma_x(t^-_{\bar \gamma}))  - \tilde cK(1+K)r.
\end{align*} 
Moreover, using that $\gamma\in G$ and $\mu_{\bar\gamma}((t^-_{\bar \gamma}, t^+_{\bar \gamma}))< \frac{1}{32}$ we infer
\begin{align*}
e^{i\bar \gamma_s(\bar t)}\cdot \tau(\pi_{\partial \Omega}(x))
&\ge  e^{i\bar \gamma_s(t^-_{\bar \gamma})}\cdot \tau ( \bar \gamma_x(t^-_{\bar \gamma}))  - |e^{i\bar \gamma_s(t^-_{\bar \gamma})} -e^{i\bar \gamma_s(\bar t)}| -  \tilde cK(1+K)r \\
&\ge  \cos \left( \frac\pi{16} + \frac\pi4\right) - \frac1{32} - \tilde cK(1+K)r \\
&\ge  \frac14 -  \tilde cK(1+K)r,
\end{align*}
for all  $x \in B_r(\bar x)$.
Let us choose 
\[r = \min\left\{ \frac1{100K}, \frac1{8  \tilde cK(1+K)} \right\},
\]
so that by the above
\begin{align*}
e^{i\bar \gamma_s(\bar t)}\cdot \tau(\pi_{\partial \Omega}(x)) \ge \frac18\qquad\forall x\in B_r(\bar x).
\end{align*}
We deduce in particular
\begin{align*}
 \int_{\Omega} \mathbf{1}_{\dist(\cdot,\partial\Omega)\leq \frac {1}{2K}}|m+\tau\circ\pi_{\partial\Omega}|\,dx
&\ge  \int_{A} (m+\tau\circ\pi_{\partial\Omega}) \cdot e^{i\bar \gamma_s(\bar t)} \, dx 
\\
&\ge \left( \frac18 - 2\beta \right) \mathcal L^2(A) 
= \frac1{16} \mathcal L^2(A) 
\\
&\geq \frac{c}{16}\beta r^2.
\end{align*}
The last inequality follows from \eqref{e:lowerboundA}.
Choosing $ \e = c\beta r^2 /32$ contradicts 
 \eqref{e:trace_diffuse} and concludes the proof of the Claim and of Proposition~\ref{p:trace}.
\end{proof}

\section{Proof of the main results}
We collect the results from the previous sections to prove Theorem~\ref{t:main} and Corollary~\ref{c:estOmega}.
We moreover prove Corollary~\ref{c:estm} and Proposition \ref{p:sharp}.

\subsection{Proof of Theorem~\ref{t:main} and Corollary~\ref{c:estOmega}}\label{s:proofmain}

Let $m$ solve \eqref{eq:eik} and \eqref{eq:kin}.
Without loss of generality, we may assume that the constant $\alpha$ provided by the trace estimate Proposition~\ref{p:trace} is equal to $-1$, hence
\begin{align*}
\mathcal H^1( \{ x \in \partial \Omega : m(x) = \tau (x) \} ) \le C \nu (\Omega).
\end{align*}
Lemma~\ref{l:comp} ensures that, if $\nu(\Omega)$ is small enough, then 
$\Omega=E(\eta)$, with $\eta =\min(\eta_0,\eta_*)$, $\eta_0$ as in Proposition~\ref{p:est_a_circle} and $\eta_*$ as in Proposition~\ref{p:adissiptrace}. 
Gathering the results of both said Propositions together with the above trace estimate, we obtain Theorem~\ref{t:main2} and \eqref{eq:estOmega}, which imply Theorem~\ref{t:main} and Corollary~\ref{c:estOmega} as explained in the introduction.

\subsection{Proof of Corollary~\ref{c:estm}}\label{s:proofestm}

In this section we prove \eqref{eq:estm}, which implies Corollary~\ref{c:estm}. We rely on a div-curl argument involving the entropies $\Sigma_1,\Sigma_2\colon\mathbb S^1\to\R^2$ introduced in \cite{JK00}, given by
\begin{align*}
\Sigma_1(m)&=\frac 43 (m_2^3,m_1^3),\\
\Sigma_2(m)&= \frac 23\left( -m_1^3-3m_1m_2^2,m_2^3 + 3 m_2 m_1^2\right).
\end{align*}

\begin{lem}\label{l:divcurl}
For any $m_1,m_2\colon\Omega\to\mathbb S^1$ with strong $L^1$ traces on $\partial\Omega$ we have
\begin{align*}
\| m_1-m_2\|^3_{L^4(\Omega)} \leq c_0 \|\nabla\cdot\Sigma(m_1)-\nabla\cdot\Sigma(m_2)\|_{\mathcal M(\Omega)} + c_0 K \|m_1-m_2\|_{L^1(\partial\Omega)},
\end{align*}
where $\Sigma=(\Sigma_1,\Sigma_2)$  and $\nabla\cdot\Sigma(m)=(\nabla\cdot\Sigma_1(m),\nabla\cdot\Sigma_2(m))$. The constant $c_0=c_0(\Omega)$ depends on the norm of the Sobolev embedding $W_0^{1,4}(\Omega)\subset L^\infty(\Omega)$, and on $K=\max_{\partial\Omega} |\kappa |$.
\end{lem}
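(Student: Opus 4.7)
The inequality is a duality/div-curl estimate. For any test vector field $u\in C^1(\overline\Omega;\R^2)$, integration by parts yields
\[
\int_\Omega (\Sigma(m_1)-\Sigma(m_2)):\nabla u\,dx = -\langle u,\nabla\cdot\Sigma(m_1)-\nabla\cdot\Sigma(m_2)\rangle + \int_{\partial\Omega} u\cdot\bigl((\Sigma(m_1)-\Sigma(m_2))n_{\partial\Omega}\bigr)\,d\mathcal H^1.
\]
Since each component of $\Sigma$ is a polynomial in $m$, $\Sigma|_{\mathbb S^1}$ is Lipschitz, so $|\Sigma(m_1)-\Sigma(m_2)|\leq C|v|$ pointwise with $v=m_1-m_2$. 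Therefore
\[
\left|\int_\Omega(\Sigma(m_1)-\Sigma(m_2)):\nabla u\,dx\right|\leq \|u\|_{L^\infty(\Omega)}\|\nabla\cdot\Sigma(m_1)-\nabla\cdot\Sigma(m_2)\|_{\mathcal M(\Omega)} + C\|u\|_{L^\infty(\partial\Omega)}\|v\|_{L^1(\partial\Omega)}.
\]

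The strategy is to find a test $u$ realizing simultaneously
\[
\int_\Omega (\Sigma(m_1)-\Sigma(m_2)):\nabla u\,dx\gtrsim \|v\|_{L^4(\Omega)}^4\quad\text{and}\quad \|u\|_{L^\infty(\overline\Omega)}\lesssim \|v\|_{L^4(\Omega)};
\]
plugging these into the previous inequality and dividing by $\|v\|_{L^4(\Omega)}$ yields the claimed bound. The construction of $u$ starts from a pointwise algebraic inequality: using the explicit cubic formulas for $\Sigma_1,\Sigma_2$ one verifies that $\Sigma|_{\mathbb S^1}$ is an immersed injection ($D\Sigma_2$ has nonvanishing determinant on $\mathbb S^1$ and $\Sigma_2$ is globally injective on $\mathbb S^1$), giving $|\Sigma(m_1)-\Sigma(m_2)|\sim|v|$ on $\mathbb S^1$ and thus a Lipschitz matrix field $A(m_1,m_2)$, for instance $A:=|v|^2(\Sigma(m_1)-\Sigma(m_2))$, with $|A|\lesssim|v|^3$ and $(\Sigma(m_1)-\Sigma(m_2)):A\gtrsim|v|^4$. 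Since $|v|\leq 2$, this gives $\|A\|_{L^4(\Omega)}\lesssim \|v\|_{L^4(\Omega)}$. One then takes $u$ so that $\nabla u$ is the gradient part of the matrix $A(m_1(\cdot),m_2(\cdot))\in L^4(\Omega;\R^{2\times 2})$ in its Helmholtz decomposition (applied row-by-row), the projection being bounded in $L^4$ by standard two-dimensional elliptic regularity. Finally, the Morrey embedding $W^{1,4}_0(\Omega)\hookrightarrow L^\infty(\Omega)$, valid in dimension 2, yields $\|u\|_{L^\infty(\overline\Omega)}\lesssim c_0\|\nabla u\|_{L^4(\Omega)}\lesssim c_0\|v\|_{L^4(\Omega)}$.

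\emph{Main obstacle.} The technical crux is controlling the contribution of the solenoidal residue $W:=A-\nabla u$ to $\int(\Sigma(m_1)-\Sigma(m_2)):W\,dx$: this term does not vanish automatically (one only knows that $W\in L^4$ with controlled norm), and must be absorbed into the measure-norm and boundary-trace terms on the right-hand side via a second integration by parts, whose geometric constants are responsible for the explicit $K$-dependence of the trace coefficient. The remaining ingredients — pointwise Lipschitz bound on $\Sigma$, Morrey embedding, and Hölder's inequality — are standard.
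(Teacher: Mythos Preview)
Your approach has a genuine gap at exactly the point you flag as the ``main obstacle.'' Writing $A=\nabla u+W$ with $W$ divergence-free row-by-row, in two dimensions each row of $W$ is $\nabla^\perp\psi_j$, and your second integration by parts reads
\[
\int_\Omega(\Sigma(m_1)-\Sigma(m_2))_j\cdot\nabla^\perp\psi_j\,dx
= -\int_\Omega \psi_j\,\nabla\times\bigl((\Sigma(m_1)-\Sigma(m_2))_j\bigr)\,dx + \text{boundary}.
\]
This produces the \emph{curl} of $\Sigma_j(m_1)-\Sigma_j(m_2)$, not its divergence, and nothing in the hypotheses controls that curl. The Jin--Kohn entropies $\Sigma_1,\Sigma_2$ are not related by a rotation that would convert one into the other, so there is no way to absorb this term into $\|\nabla\cdot\Sigma(m_1)-\nabla\cdot\Sigma(m_2)\|_{\mathcal M}$. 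As written, the argument does not close.

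The paper avoids this entirely by exploiting a much sharper algebraic fact than $|\Sigma(m_1)-\Sigma(m_2)|\sim|v|$: the determinantal lower bound
\[
\det\bigl(\Sigma(m_1)-\Sigma(m_2)\bigr)\gtrsim |m_1-m_2|^4
\]
(from \cite{LP18}). With a cutoff $\chi\in C_c^\infty(\Omega)$, one sets $E=\chi(\Sigma_1(m_1)-\Sigma_1(m_2))$, $B=\chi(\Sigma_2(m_1)-\Sigma_2(m_2))$ and applies the div-curl estimate $\int E\wedge B\lesssim\|E\|_{L^4}\|\nabla\cdot(E,B)\|_{W^{-1,4/3}}$ directly; since $E\wedge B=\chi^2\det(\Sigma(m_1)-\Sigma(m_2))$, the left side already dominates $\|\chi(m_1-m_2)\|_{L^4}^4$, and no Helmholtz decomposition (hence no uncontrolled solenoidal remainder) ever appears. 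The $W^{-1,4/3}$ norm is then estimated by duality against $W_0^{1,4}\subset L^\infty$, and the trace term with its $K$-dependence emerges when one lets $\chi=\chi_\e\to\mathbf 1_\Omega$ with $|\nabla\chi_\e|\lesssim K/\e$.
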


\begin{proof}[Proof of Lemma~\ref{l:divcurl}]
The proof is inspired by \cite{golse10}.
Let $\chi\in C_c^\infty(\Omega)$ such that $|\chi|\leq 1$, and apply the div-curl estimate of \cite[Lemma~4.2]{GL} to the vector fields
\begin{align*}
E=\chi(\Sigma_1(m_1)-\Sigma_1(m_2)),\qquad B=\chi(\Sigma_2(m_1)-\Sigma_2(m_2)).
\end{align*}
for $p=4$.
This yields
\begin{align*}
\int E\wedge B 
&\lesssim \|\chi(\Sigma(m_1)-\Sigma(m_2))\|_{L^4(\Omega)}
\|\nabla\cdot (\chi(\Sigma(m_1)-\Sigma(m_2))\|_{W^{-1,4/3}(\Omega)} \\
&\lesssim \| \chi (m_1-m_2)\|_{L^4(\Omega)}
\|\nabla\cdot (\chi(\Sigma(m_1)-\Sigma(m_2))\|_{W^{-1,4/3}(\Omega)}.
\end{align*}
we used $|\nabla\Sigma|\lesssim 1$ for the last inequality. Moreover, thanks to \cite[Lemma~7 and (92)]{LP18}  we have
\begin{align*}
E\wedge B =\chi^2\det(\Sigma(m_1)-\Sigma(m_2))\gtrsim \chi^2 |m_1-m_2|^4\geq \chi^4|m_1-m_2|^4.
\end{align*}
Therefore we deduce from the previous inequality that
\begin{align}\label{eq:divcurl1}
\|\chi(m_1-m_2)\|_{L^4}^3 \lesssim 
\|\nabla\cdot (\chi(\Sigma(m_1)-\Sigma(m_2))\|_{W^{-1,4/3}(\Omega)}.
\end{align}
Next we compute, for any $\zeta\in W_0^{1,4}(\Omega)\subset C_0(\Omega)$,
\begin{align*}
&\left\langle \nabla\cdot (\chi(\Sigma(m_1)-\Sigma(m_2)),\zeta\right\rangle
\\
& =
\int\zeta\nabla\chi\cdot (\Sigma(m_1)-\Sigma(m_2))
+ \langle \nabla\cdot\Sigma(m_1)-\nabla\cdot \Sigma(m_2),\chi\zeta\rangle \\
&\lesssim \|\zeta\|_{\infty} \|\nabla\chi(m_1-m_2)\|_{L^1} 
+ \|\zeta\|_{\infty}
\|\nabla\cdot\Sigma(m_1)-\nabla\cdot\Sigma(m_2)\|_{\mathcal M(\Omega)} \\
&\lesssim c_0(\Omega) \|\nabla\zeta\|_{L^4}
\left(\|\nabla \chi(m_1-m_2)\|_{L^1} + 
\|\nabla\cdot\Sigma(m_1)-\nabla\cdot\Sigma(m_2)\|_{\mathcal M(\Omega)} 
\right), 
\end{align*}
where $c_0(\Omega)$ is the norm of the Sobolev embedding $W_0^{1,4}(\Omega)\subset L^\infty(\Omega)$. By definition of $W^{-1,4/3}=(W_0^{1,4})'$ this implies 
\begin{align*}
&\|\nabla\cdot (\chi(\Sigma(m_1)-\Sigma(m_2))\|_{W^{-1,4/3}(\Omega)}
\\
&\lesssim 
c_0(\Omega) 
\left(\|\nabla\chi(m_1-m_2)\|_{L^1} + 
\|\nabla\cdot\Sigma(m_1)-\nabla\cdot\Sigma(m_2)\|_{\mathcal M(\Omega)} 
\right).
\end{align*}
Plugging this into \eqref{eq:divcurl1} we obtain
\begin{align*}
\|\chi(m_1-m_2)\|_{L^4}^3 \lesssim 
c_0(\Omega) 
\left(\|\nabla\chi(m_1-m_2)\|_{L^1} + 
\|\nabla\cdot\Sigma(m_1)-\nabla\cdot\Sigma(m_2)\|_{\mathcal M(\Omega)} 
\right).
\end{align*}
Finally, choosing $\chi=\chi_\e$ such that $\chi_\e(x)=1$ for $\dist(x,\partial\Omega)>\e$ and $|\nabla\chi|\lesssim K/\e$ for $\e\to 0$ and using the trace property of $m_1,m_2$ we obtain the result.
\end{proof}

Applying Lemma~\ref{l:divcurl} to $m_1=m$ and $m_2=m_*=i(x-x_*)/|x-x_*|$ we deduce, using  Theorem~\ref{t:main2} to estimate $\|m-m_*\|_{L^1(\partial\Omega)}$,
\begin{align*}
\|m-m_*\|_{L^4(\Omega)}^3 \leq C\, \|\nabla\cdot\Sigma(m)\|_{\mathcal M(\Omega)} + C\, \nu(\Omega)^{\frac 12}.
\end{align*}
Moreover since $\Sigma_1,\Sigma_2$ are entropies (see Appendix~\ref{a:ent}), the first term in the right-hand side is controlled by $\nu(\Omega)$, and we directly deduce \eqref{eq:estm}.\qed

\subsection{Proof of Proposition \ref{p:sharp}}\label{s:sharp}

Given $N\ge 3$, we define $\Omega_N$ as the convex hull of the union of the disks $D_{1/2}(e^{2ik\pi/N}/2)$, $k=0,\ldots, N-1$, rescaled by a factor $1+\mathcal O(1/N^2)$ in order to have perimeter $2\pi$. In other words, $\Omega_N$ is obtained from the regular $N$-gon replacing sharp corners by arcs of circles, see Figure~\ref{f:OmegaN}.
The set $\Omega_N$ is $C^{1,1}$ with $\sup_{\partial\Omega_N}|\kappa|\leq 2$.

\begin{figure}[h]
\centering
\def\svgwidth{0.55\columnwidth}
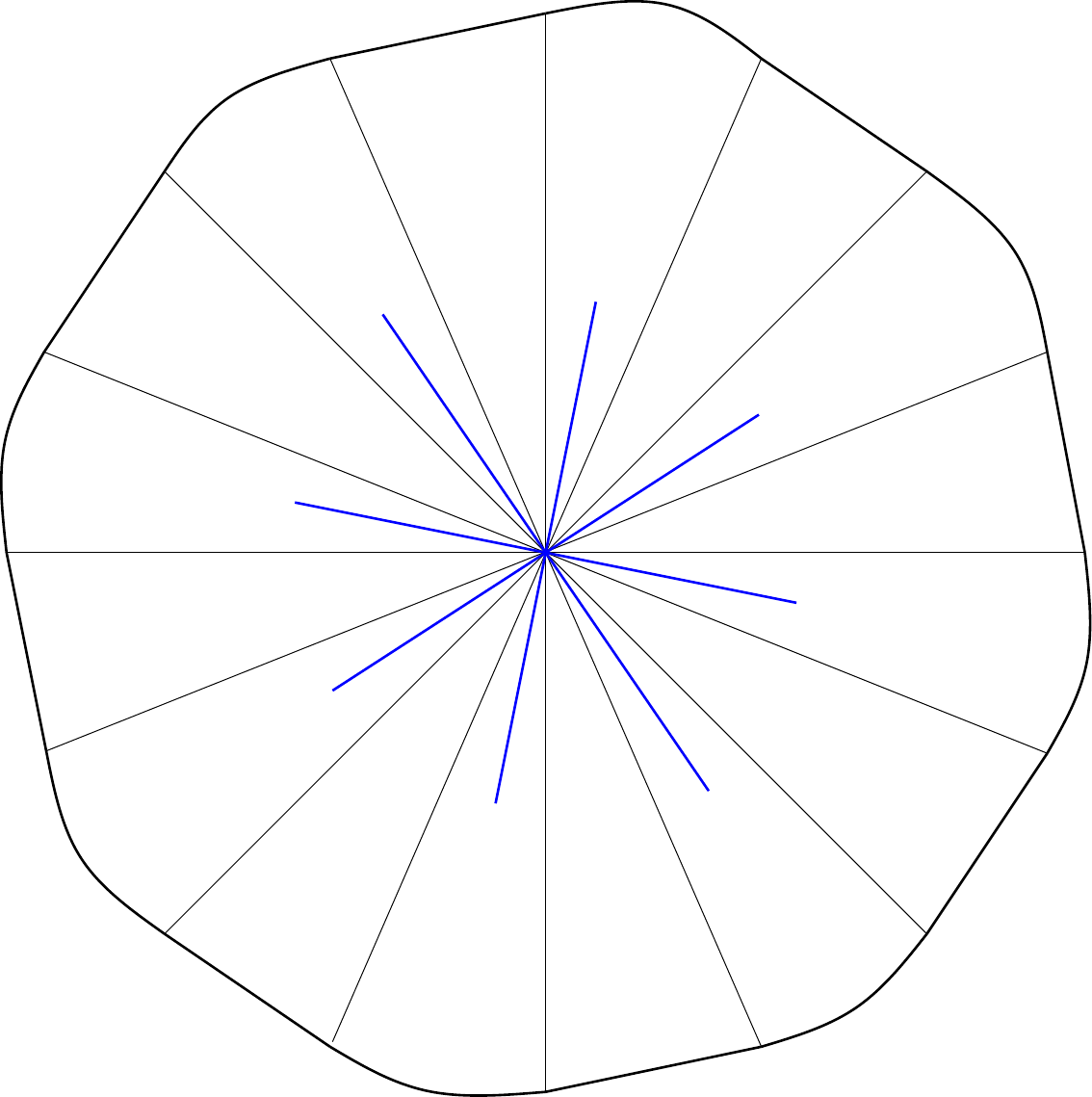
\caption{\small The figure represents $\Omega_N$ with $N=8$. Its boundary is composed by 8 segments and 8 circular arcs. The blue segments form the set where $\nabla \dist(\cdot, \partial \Omega)$ is discontinuous.}\label{f:Nagon}
\label{f:OmegaN}
\end{figure}

The unit normal $n_{\partial\Omega_N}$ is equal to the disk's unit normal $x/|x|$ at the $2N$ points of polar angle $e^{i\pi \ell/N}$ ($\ell=0,\ldots, 2N-1$), and it differs from it by $\sim 1/N$ in $2N$ boundary arcs of length $\sim 1/N$ away from those points. Therefore we have
\begin{align}\label{eq:normalOmegaN}
\int_{\partial\Omega_N}
\left| n_{\partial\Omega_N}(x)-\frac{x}{|x|}\right|^2\,d\mathcal H^1(x)\sim \frac {1}{N^2}.
\end{align}
One particular solution $m_N$ of \eqref{eq:eik} and \eqref{eq:kin} in $\Omega_N$ is given by
\begin{align*}
m_N(x)=i\nabla\dist(\cdot,\partial\Omega_N).
\end{align*}
This map $m_N$ is $BV$, its jump set $J_N$ is the union of $N$ segments,
\begin{align*}
J_N =\bigcup_{k=0}^{N-1} [0,x_k],\qquad x_k=\left(\frac 12 + \mathcal O\left(\frac 1{N^2}\right)\right)e^{2ik\pi/N}/2,
\end{align*}
with jump amplitude 
\begin{align*}
|m_N^+-m_N^-| \lesssim \frac{1}{N}\qquad\text{on }J_N.
\end{align*}
Replacing the sharp jump along $J_N$ with a well-chosen smooth transition at scale $\e$, one obtains maps $\tilde m_{\e,N}\to m_N$ as $\e\to 0$, with
\begin{align}\label{eq:upperboundOmegaN}
\limsup_{\e\to 0} F_\e(\tilde m_{\e,N};\Omega_N) \lesssim \int_{J_N}|m_N^+-m_N^-|^3\,d\mathcal H^1 \lesssim N \cdot \frac{1}{N^3} =\frac{1}{N^2}.
\end{align}
Details of such construction can be found e.g. in \cite{poliakovsky07,contidel} for the Aviles-Giga functional $E_\e^{AG}$, which is enough to obtain an upper bound on $F_\e$. 
For the functional $E_\e^{RS}$ a similar construction is performed in \cite{RS01},  and the methods in \cite{poliakovsky07} apply for $E_\e^{ARS}$.
(Note that for our explicit map $m_{N}$  the technical details of such construction can be significantly simplified because the jump set $J_N$ is particularly simple and stays away from the boundary, and $m_N$ is smooth outside of it.) Combining \eqref{eq:normalOmegaN}, \eqref{eq:upperboundOmegaN} and \eqref{eq:main} we obtain Proposition~\ref{p:sharp}.\qed

\begin{rem}
We cannot prove that the minimizers $m_{\e,N}$ of $E_\e^{AG}(\cdot; \Omega_N)$ and $E_\e^{ARS}(\cdot;\Omega_N)$ converge to $m_{N}$ as $\e \to 0$, but from the proof above we have that $m_{N}$ and the (possibly different and not unique) limit of $m_{\e,N}$ go to 0 with the same order as $N \to \infty$. 
\end{rem}

\begin{appendices}

\section{Entropy productions, compactness and kinetic formulation}\label{a:ent}

The kinetic formulation \eqref{eq:kin} is intimately linked to the notion of entropy, also borrowed from conservation laws, and introduced in \cite{dkmo01} for the eikonal equation. A smooth map $\Phi\colon\mathbb S^1\to\R^2$ is an entropy for the eikonal equation \eqref{eq:eik} if and only if it preserves the divergence-free quality of smooth solutions :
\begin{align*}
(|m|=1\text{ and }\nabla\cdot m=0 )\quad
\Rightarrow
\quad
\nabla\cdot\Phi(m)=0,
\end{align*}
for any open $\Omega\subset\R^2$ and smooth $m\colon\Omega\to\R^2$. 
Direct calculation shows that this is equivalent to the existence of a smooth function $\lambda_\Phi\colon\R/2\pi\Z\to\R$ such that
\begin{align*}
\frac{d}{d\theta}\Phi(e^{i\theta})=\lambda_\Phi(\theta)ie^{i\theta}\qquad\forall\theta\in\R.
\end{align*}
To any $f\in C^\infty(\R/2\pi\Z)$ one may associate an entropy $\Phi_f$ given by
\begin{align}\label{eq:Phif}
\Phi_f(z)=\int_{\R/2\pi\Z} f(s)\mathbf 1_{z\cdot e^{is}>0}\, ds\qquad\forall z\in\mathbb S^1,
\end{align}
and the kinetic formulation \eqref{eq:kin} is equivalent to
\begin{align}\label{eq:dvPhif}
\langle \nabla\cdot \Phi_f(m),\zeta \rangle =-\langle \sigma, 
 f'(s)\zeta(x) \rangle\qquad\forall \zeta\in C_c^\infty(\Omega),f\in C^\infty(\R/2\pi\Z).
\end{align}
An entropy $\Phi$, whenever extended to $\R^2$ by setting $\widehat\Phi(re^{i\theta})=\eta(r)\Phi(e^{i\theta})$ for some fixed real-valued cut-off function $\eta\in C_c^\infty(0,\infty)$ with $\eta(1)=1$, satisfies (see e.g. \cite{JOP,DeI})
\begin{align*}
\nabla\cdot\widehat\Phi(m)=\Psi(m)\cdot\nabla (1-|m|^2) +\alpha(m)\nabla\cdot m\qquad\forall m\in H^1(\Omega;\R^2),
\end{align*}
where $\Psi\colon\R^2\to\R^2$ and $\alpha\colon\R^2\to\R$ are such that
\begin{align*}
\|\nabla\Psi\|_{C^1(\R^2)} + \|\nabla\alpha\|_{C^1(\R^2)} \leq c \|\lambda_\Phi\|_{C^1(\R/2\pi\Z)},
\end{align*}
for some constant $c>0$ depending only on the cut-off function $\eta$. Applying this to $m'_\e=(m^1_\e,m^2_\e)$  for some sequence $m_\e\in H^1(\Omega;\R^3)$ with $F_\e(m_\e)\leq C$, we find
\begin{align}\label{eq:dvPhimeps}
\nabla\cdot\widehat\Phi(m'_\e)&=\nabla\cdot \left[\Psi(m'_\e)(1-|m'_\e|^2) -\alpha(m'_\e)H_\e\right]
\nonumber \\
&
\quad
-(1-|m'_\e|^2)\nabla\cdot\Psi(m'_\e)-H_\e\cdot\nabla [\alpha(m'_\e)]\qquad\text{ in }\mathcal D'(\Omega),
\end{align}
where $H_\e\colon\R^2\to\R^2$ is the curl-free vector field such that $\nabla\cdot H_\e =-\nabla\cdot(\mathbf 1_\Omega m'_\e)$.
 Boundedness of the energy $F_\e(m_\e;\Omega)$ implies that the first line in the right-hand side of \eqref{eq:dvPhimeps} tends to $0$ in $H^{-1}(\Omega)$, while the second line is bounded in $L^1(\Omega)$. 
One can then argue exactly as in \cite{dkmo01}, to deduce that $\nabla\cdot\widehat\Phi(m'_\e)$ is precompact in $H^{-1}(\Omega)$ and that $m'_\e$ is precompact in $L^2(\Omega)$.
This gives the precompactness of $m_\e$ since $m_\e^3\to 0$ in $L^2(\Omega)$.
Moreover taking the limit $\e\to 0$ in \eqref{eq:dvPhimeps} along a converging subsequence $m_\e\to m$, one infers
\begin{align*}
\langle \dv \Phi(m),\zeta\rangle \lesssim \|\zeta\|_\infty \|\lambda_\Phi\|_{C^1} \liminf_{\e\to 0}F_\e(m_\e;\Omega)\qquad\forall \zeta\in C_c^\infty(\Omega).
\end{align*}
Using the arguments of \cite[\S~3.1]{GL} (see \cite[Appendix~B]{LPfacto} for more details), this estimate provides the existence of $\sigma\in\mathcal M(\Omega\times \R/2\pi\Z)$ satisfying \eqref{eq:kin}.

\section{On the sharp lower bound for $E_\e^{ARS}$}\label{a:ARS}

The analysis recalled in  Appendix~\ref{a:ent} provides an energy lower bound 
in terms of the kinetic dissipation measure of the limit map. 
In the case of $E_\e^{ARS}$ \eqref{eq:ARS} from \cite{ARS}, these arguments can be refined to obtain a sharp lower bound: for any $m=\lim m_\e$ we have
\begin{align}\label{eq:ARSsharp}
\frac 12 \nu(\Omega)\leq \liminf_{\e\to 0} E_\e^{ARS}(m_\e;\Omega),
\end{align}
where $\nu$ is the minimal kinetic dissipation measure associated to $m$ as defined in \eqref{e:def_nu}. Moreover this lower bound is sharp if $m\in BV(\Omega;\R^2)$, in the sense of $\Gamma$-convergence: there exists $m_\e\to m$ such that 
\begin{align}\label{eq:ARSup}
\limsup_{\e\to 0} E_\e^{ARS}(m_\e;\Omega) \leq \frac 12 \nu(\Omega).
\end{align}
The sharp lower bound \eqref{eq:ARSsharp} is contained in \cite{ARS}, but not explicitly stated, so we briefly recall here why it is valid. The key step is \cite[Lemma~2.2]{ARS}, which ensures the existence of $\tilde m_\e \in W^{1,p}(\Omega;\mathbb S^1)$ for $1\leq p<2$, such that $\tilde m_\e\to m$ and
\begin{align}\label{eq:lemmaARS}
\int_\Omega |\nabla \tilde m_\e| \cdot |\tilde H_\e| \, dx \leq E_\e^{ARS}(m_\e;\Omega) +o(1).
\end{align}
Here $\tilde H_\e\colon\R^2\to\R^2$ is the curl-free vector field such that $\nabla\cdot \tilde H_\e =-\nabla\cdot (\mathbf 1_\Omega\tilde m_\e)$. The gain provided by this lemma is that $\tilde m_\e$ takes values into $\mathbb S^1$, so one can directly compute entropy productions (without using an extension $\widehat\Phi$ as in the previous section). Specifically, for an entropy $\Phi$ we have
\begin{align*}
\nabla\cdot \Phi(\tilde m_\e)&=\lambda_\Phi(\tilde m_\e)\nabla\cdot\tilde m_\e\\
& =- \nabla\cdot \left[ \lambda_\Phi(\tilde m_\e)\tilde H_\e\right]
+ \lambda_\Phi'(\tilde m_\e) H_\e \cdot \nabla\tilde m_\e\qquad\text{in }\mathcal D'(\Omega).
\end{align*}
Here $\lambda_\Phi(e^{i\theta})=ie^{i\theta}\cdot (d/d\theta)\Phi(e^{i\theta})$ as in Appendix~\ref{a:ent}. As in Appendix~\ref{a:ent} this implies
\begin{align*}
|\nabla\cdot \Phi(m)|(\Omega) \leq \|\lambda_\Phi'\|_\infty \liminf_{\e\to 0} E_\e^{ARS}(m_\e;\Omega).
\end{align*}
This is also the argument in Step~1 of the proof of \cite[Theorem~1]{ARS}. 
A natural refinement of that argument (see e.g. the proof of  \cite[Proposition~2]{llp22}) leads to
\begin{align*}
 \left(\bigvee_{\Phi\in S}|\nabla\cdot \Phi(m)|\right)(\Omega)
 \leq \liminf_{n\to\infty} E_\e^{ARS}(m_\e;\Omega),
\end{align*}
where $S$ is any class of entropies $\Phi$ with $\|\lambda_\Phi'\|_\infty\leq 1$, and $\bigvee$ denotes the lowest upper bound measure of a family of measures \cite[Definition~1.68]{ambrosio}. Applying this to entropies $\Phi_f$ as in \eqref{eq:Phif}, which satisfy 
$\lambda_{\Phi_f}(\theta)=f(\theta+\pi/2)+f(\theta-\pi/2)$, we deduce 
\begin{align*}
\left(\bigvee_{|f'|\leq 1/2}|\nabla\cdot \Phi_f(m)|\right)(\Omega) \leq \liminf_{n\to\infty} E_\e^{ARS}(m_\e;\Omega).
\end{align*}
Recalling \eqref{eq:dvPhif} and \eqref{e:def_nu}, we see that the left-hand side is equal to $(1/2)\nu(\Omega)$, which proves \eqref{eq:ARSsharp}.

For a $BV$ map $m$, we let $J_m$ denote its jump set and $m^\pm$ the traces of $m$ along $J_m$.
Then, the calculations in \cite[Corollary~3.4]{marconi21ellipse}  imply that we have
\begin{align*}
\frac 12 \nu(\Omega) &= \int_{J_m} c(|m_+-m_-|)\, d\mathcal H^1, \\
\text{where }c(2\sin X)&=\begin{cases}
2 \left|\sin X - X \cos X \right| & \text{if }0\leq X\leq \pi/4,\\
2\left| (X-\pi/2)\cos X - \sin X +\sqrt{2} \right| & \text{if }\pi/4 \leq X\leq \pi/2.
\end{cases}
\end{align*}
This is exactly the expression of the lower bound in \cite[Theorem~1]{ARS}.
Moreover, that lower bound is shown to be  optimal  in \cite[Theorem~2]{ARS}, in the sense that the energy cost $A(X)=c(2\sin X)$ corresponds to the asymptotic energy per unit-length of an ideal wall transition between to limit values $m^\pm$ with $|m^+-m^-|=2\sin X$. 
This implies the $\Gamma$-upper bound \eqref{eq:ARSup} using e.g. the techniques in \cite{poliakovsky13}.

\begin{rem}\label{r:ARS}
A closer look at Step~1 in the proof of \cite[Theorem~1]{ARS} reveals that only entropies of the form $\Phi_{f^\sigma}$ are used to obtain the lower bound \eqref{eq:ARSsharp}, where $2f^\sigma(s)=g(s-\sigma)$ for any $\sigma\in\R$ and $g$ is $\pi$-periodic with $g(s)=\pi/4-|s-\pi/4|$ for $s\in [-\pi/4,3\pi/4]$. 
This should come as no surprise, since, as a 
 consequence of the disintegration of $\sigma_{\mathrm{min}}$ in \cite[Corollary~3.4]{marconi21ellipse}, it can be checked that the identity
\begin{align*}
\frac{1}{2}\nu(\Omega)&=\bigvee_{|f'|\leq 1/2} |\nabla\cdot\Phi_f(m)| 
=\bigvee_{\sigma\in\R}|\nabla\cdot\Phi_{f^\sigma}(m)|,
\end{align*}
is valid for any $m$ satisfying the kinetic formulation \eqref{eq:kin}.
\end{rem}

\section{Quantitative alternative to the compactness argument under a restrictive trace assumption}\label{a:alt}

In this appendix we prove that, if the integral of $a$ is small enough, then $\Omega$ is close enough to a disk. 
This provides a quantitative proof of the estimate \eqref{e:dist_circle} obtained via the compactness argument of Lemma~\ref{l:comp}. 
We are however not able to prove \eqref{e:trace_diffuse} without a compactness argument, so that this only leads to a quantitative proof of Theorem~\ref{t:main2} under the additional trace assumption that $m \cdot \tau$ is constant on $\partial\Omega$.  

\begin{prop}\label{p:soft}
Let $\Omega$ as in Theorem \ref{t:main}.
For any $\eta>0$ there is $c=c(\eta,K)>0$ such that
 if $\int_{E(\eta)^3}a\, d (\mathcal H^1)^{\otimes 3} \le c$, then $E(\eta) = \partial \Omega$.
\end{prop}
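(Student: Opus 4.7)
The plan is to argue by contrapositive: assuming $E(\eta)\ne\partial\Omega$, I will produce a positive lower bound on $\int_{E(\eta)^3}a\,d(\mathcal H^1)^{\otimes 3}$ depending only on $\eta$ and $K$.

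First, I would adapt the covering argument of Proposition~\ref{p:est_a_circle} to the partial boundary $E(\eta)$ in place of $\partial\Omega$. Its crucial geometric input, Lemma~\ref{l:geom1}, applied to $x\in E(\eta)$ yields $|\tau(x)\cdot (x-x_0)/|x-x_0||\le 2\sqrt{K\eta R}$, so the tangent to $\partial\Omega$ is nearly perpendicular to the radial direction from $x_0$. The three-point geometric computation \eqref{eq:aboundsnormal} only uses boundary points falling into angular cones around $x_0$ and should therefore go through with triples in $E(\eta)^3$, provided enough angular cones intersect $E(\eta)$; this can be checked from the identity $1=\dot r^2+r^2\dot\theta^2$ together with the bound on $\dot r$ just recalled. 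Running the same covering argument then yields, for some $x_*\in\R^2$ close to $x_0$,
\[
\int_{E(\eta)}\Big|n_{\partial\Omega}(x)-\tfrac{x-x_*}{|x-x_*|}\Big|^2\,d\mathcal H^1(x)\;\lesssim\;\int_{E(\eta)^3}a^2\,d(\mathcal H^1)^{\otimes 3}\;\le\;\tfrac{\pi}{3}\int_{E(\eta)^3}a\,d(\mathcal H^1)^{\otimes 3}\;\le\;\tfrac{\pi}{3}c,
\]
where I used the pointwise bound $a\le\pi/3$ from the third condition of Definition~\ref{def:a}.

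Second, combining this $L^2$-estimate with Lemma~\ref{l:aux} should give a quantitative Hausdorff proximity of $E(\eta)$ to an arc of $\partial B_1(x_*)$, with error $\lesssim\sqrt{c}$. Since $\mathcal H^1(\partial\Omega)=2\pi$, this in turn forces $\mathcal H^1(\partial\Omega\setminus E(\eta))$ to be arbitrarily small as $c\to 0$.

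Third, I would derive a contradiction from the very existence of a nontrivial arc $U\subset\partial\Omega\setminus E(\eta)$. By the definition of $E(\eta)$ and Lemma~\ref{l:continuation}, either $U$ corresponds to a failure of the star-shape property of $\Omega$ with respect to $x_0$, or its endpoints $x^\pm$ lie on the critical circle $\partial B_{(1+\eta)R}(x_0)$ with $\tau(x^\pm)\perp(x^\pm-x_0)$. In the latter case the curvature bound $|\kappa|\le K$ forces $U$ to have length at least some $c'(\eta,K)>0$: the boundary must exit $\overline{B_{(1+\eta)R}(x_0)}$, turn around under curvature at most $K$, and re-enter tangentially. The star-shape failure case is handled by a similar tangency argument at an obstruction point on $\partial\Omega\cap[x_0,y]$ for $y\in U$. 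For $c=c(\eta,K)$ small enough, this fixed lower bound on $\mathcal H^1(\partial\Omega\setminus E(\eta))$ is incompatible with its smallness obtained in the second step. The hard part will be making this third step fully quantitative: matching the near-circular shape of $E(\eta)$ around $x_*\approx x_0$ with the tangency and curvature constraints at the transition points $x^\pm$ to extract the explicit threshold $c(\eta,K)$.
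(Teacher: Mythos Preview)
Your approach is quite different from the paper's, and it has real gaps that would be hard to close.

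The paper argues directly: it proves (Lemma~\ref{l:pointwise_estimate}) that if $E(\eta)\neq\partial\Omega$ then there is a \emph{single} triple $\hat x\in E(\eta/2)^3$ with $a(\hat x)\ge a_0(\eta,K)>0$, and then uses the Lipschitz bound on $a$ (Lemma~\ref{l:a_Lip}) together with Lemma~\ref{l:continuation} to extend this to a cube of side $\sim\delta(\eta,K)$ inside $E(\eta)^3$, giving $\int_{E(\eta)^3}a\gtrsim a_0\delta^3$. The triple is built explicitly: since $E(\eta)\neq\partial\Omega$, the function $\psi(s)=|\p(s)-x_0|-R$ rises from $0$ at a tangency point to $\eta R$, so at some $y\in E(\eta)$ the tangent has a definite radial component $\tau(y)\cdot y/|y|>\eta R/(2\pi)$; one then locates a tangency point $\tilde x$ at angular separation $\in[\eta R/(8\pi^2K),\,\pi-\eta R/(8\pi^2K)]$ from $y$, picks two nearby points $x_1,x_2$ on either side of $\tilde x$, and checks that the lines from $x_1,x_2,y$ through $z=t\tilde x$ (for suitable small $t$) witness $a(x_1,x_2,y)\ge a_0$.

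Your route runs into several difficulties. First, adapting Proposition~\ref{p:est_a_circle} to $E(\eta)$ requires, for each angular sector $I_\alpha$, companion points in $I_{\alpha+2\pi/3}\cap E(\eta)$ and $I_{\alpha+4\pi/3}\cap E(\eta)$; but $E(\eta)$ need not meet every sector --- a priori it may be a few short arcs near tangency points. Second, and more seriously, the inequality you obtain in Step~1 carries almost no information: by Lemma~\ref{l:geom1}, every $x\in E(\eta)$ already satisfies $|\tau(x)\cdot(x-x_0)/|x-x_0||\le 2\sqrt{K\eta R}$, so $\int_{E(\eta)}|n_{\partial\Omega}-n_{x_0}|^2\lesssim K\eta$ holds regardless of $c$. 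Thus your Step~2 cannot extract a bound on $\mathcal H^1(\partial\Omega\setminus E(\eta))$ that tends to $0$ with $c$; you would be comparing two fixed quantities depending only on $(\eta,K)$, with no mechanism to force a contradiction. Third, Lemma~\ref{l:aux} applies to a closed star-shaped curve, not to a subset; there is no obvious substitute yielding Hausdorff proximity of $E(\eta)$ to an arc from the normal estimate alone. The ``hard part'' you flag in Step~3 is thus not the bottleneck --- the pipeline breaks earlier, at the passage from the normal estimate on $E(\eta)$ to smallness of the complement.
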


The main ingredient to prove Proposition~\ref{p:soft} is the following lower bound on $a$ at one boundary triple, if $\Omega$ fails to be close enough to $\partial D$.

\begin{lem}\label{l:pointwise_estimate}
For any $\eta>0$ there is a constant $a_0=a_0(\eta,K)>0$ such that, if $E(\eta)\neq \partial\Omega$ then there exists $\hat x\in E(\eta)^3$ with
\[
a(\hat x) \ge a_0.
\]
\end{lem}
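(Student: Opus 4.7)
The plan is to argue by contradiction and compactness, paralleling the strategy of Lemma~\ref{l:comp} but without recourse to a map $m$.

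First I would reduce to the case $\eta \leq 1/(4K)$. Indeed if $\eta' \leq \eta$, then $E(\eta') \subseteq E(\eta)$, so any triple in $E(\eta')^3$ with $a \geq a_0$ automatically lies in $E(\eta)^3$, and $E(\eta) \neq \partial\Omega$ implies $E(\eta') \neq \partial\Omega$; hence it suffices to prove the claim for a small $\eta$ depending on $K$. With $\eta \leq 1/(4K)$, Lemma~\ref{l:continuation} translates the hypothesis $E(\eta) \neq \partial\Omega$ into the existence of a ``far'' boundary point $y$ with $|y-x_0| > (1+\eta)R$.

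Suppose the lemma fails: then there is a sequence $\Omega_n$ of $C^{1,1}$ simply connected domains with $\mathcal H^1(\partial\Omega_n)=2\pi$ and $\sup|\kappa_n|\leq K$, satisfying $E_n(\eta) \neq \partial\Omega_n$ but $\sup_{\hat x \in E_n(\eta)^3} a_n(\hat x) \to 0$. After translating so that each $x_{0,n}$ is at the origin, the arclength parametrizations $\p_n$ are $1$-Lipschitz with $K$-Lipschitz derivatives, so by Arzel\`a-Ascoli (up to extraction) $\p_n \to \p_\infty$ in $C^1(\R/2\pi\Z;\R^2)$ and $R_n \to R_\infty \in [1/K,1]$. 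As in Steps~1--3 of the proof of Lemma~\ref{l:comp}, the limit curve bounds a simply connected $C^{1,1}$ domain $\Omega_\infty$ (with only isolated type-I singularities), and the far points $y_n$ have a limit $y_\infty \in \partial\Omega_\infty$ with $|y_\infty| \geq (1+\eta) R_\infty$. The $C^1$ convergence and the Lipschitz continuity of $a$ (Lemma~\ref{l:a_Lip}) imply that any triple $\hat x_\infty \in E_\infty(\eta)^3$ is the limit of triples $\hat x_n \in E_n(\eta)^3$, giving $a_\infty(\hat x_\infty) = 0$ for every such triple.

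To reach a contradiction, I would show that $a_\infty \equiv 0$ on $E_\infty(\eta)^3$ forces $\partial\Omega_\infty$ to be a circle. To do so, one picks three points $x_1, x_2, x_3 \in E_\infty(\eta)$ whose angular positions around the origin are approximately evenly spread at $2\pi/3$ apart (feasible since the contact points of $\partial\Omega_\infty$ with $\partial B_{R_\infty}(0)$ lie in $E_\infty(\eta)$ and their convex hull contains the origin by maximality of the inscribed disk), parametrizes the nearby concurrent triples $(\alpha_k)$ by letting $z_0 = x_0 + \xi$ vary in $\overline B_{R_\infty/2}(0)$, and linearizes as in the proof of Proposition~\ref{p:est_a_circle} to show that $a_\infty(\hat x_\infty) = 0$ enforces concurrency of the three normals at $(x_1, x_2, x_3)$ at a common point inside $\overline B_{R_\infty/2}(0)$ (otherwise a positive choice of tilt angles $(\delta_k)$ would produce a feasible configuration with $a > 0$). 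Varying the triple, all normals along $E_\infty(\eta)$ pass through a common focal point $x_*$; the inclusion of the contact points (where the normal is radial from the origin) forces $x_* = 0$ and $E_\infty(\eta) \subset \partial B_{R_\infty}(0)$. A continuation argument as in Step~4 of Lemma~\ref{l:comp}'s proof then extends this to $\partial\Omega_\infty = \partial B_{R_\infty}(0)$, so $R_\infty = 1$ and $|y_\infty| \geq 1+\eta > 1$ gives the desired contradiction.

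The hardest step is the very last argument, translating the pointwise vanishing of $a_\infty$ into concurrency of normals. It relies on a careful analysis of the $2$-parameter family of concurrent tilted lines about the pivots $x_k$, verifying that a non-concurrent configuration of normals at a spread triple always opens a feasible region of positive tilt angles and hence produces $a > 0$.
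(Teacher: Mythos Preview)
Your compactness strategy runs against the stated purpose of Appendix~\ref{a:alt}, which is precisely to \emph{bypass} the compactness argument of Section~\ref{s:comp}; but set that aside and consider correctness.

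The decisive gap is the claim that one can pick three points of $E_\infty(\eta)$ whose angular positions about the origin are spread at roughly $2\pi/3$. Maximality of the inscribed disk only guarantees that the convex hull of the contact set contains the center; it does not produce three well-separated contact points. A stadium-shaped limit (two half-disks of radius $R_\infty$ joined by straight segments) has exactly two antipodal contact points, and $E_\infty(\eta)$ is then a pair of short arcs near $(0,\pm R_\infty)$, so no spread triple exists and your concurrency-of-normals argument cannot start. The paper's constructive proof is built around exactly this obstruction: it places \emph{two} of the three points $x_1,x_2$ on either side of a single tangency point $\tilde x$ (Step~4), and takes $x_3=y$ at a boundary point where the radial distance is strictly increasing (Step~1). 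The resulting directions $e^{i\alpha_1},e^{i\alpha_2}$ are nearly antipodal, and $e^{i\alpha_3}$ lies strictly inside the longer of the two arcs between them (Step~3 pins down the angular position of $\tilde x$ relative to $y$ to make this work), which is what forces $l(\hat\alpha)>\pi$ and yields the quantitative lower bound $a(\hat x)\ge a_0$. That geometric mechanism --- two nearly opposite directions near a tangency, plus a third transversal one at the deviating point --- is the idea missing from your sketch.

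A secondary issue: you invoke Steps~1--3 of the proof of Lemma~\ref{l:comp} to control the limit domain, but Step~3 there (ruling out type-II singularities) and the continuation in Step~4 both rely on the zero-energy map $m$ via the Jabin--Otto--Perthame rigidity; no such map is available in the present purely geometric setting, so those steps cannot be quoted verbatim.
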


\begin{proof}
We choose coordinates in which $x_0=0$ and consider $\eta\leq\e_0/K$ for some small absolute constant $\e_0>0$ to be adjusted during the proof: for larger values of $\eta$ we can then simply take $a_0=a_0(\e_0/K,K)$. 

We assume that $E(\eta)\neq \partial\Omega$ and prove the existence of $\hat x$ satisfying $a(\hat x)\geq a_0$ in several steps. During the proof we denote by $c_0$ a generic small constant that depends only on $\eta$ and $K$. We are going to construct a triple $\hat x =(x_1,x_2,x_3)\in E_*^3$ and three directions $e^{i\alpha_1},e^{i\alpha_2},e^{i\alpha_3}$ that can be used in the definition of $a(\hat x)$ to show that $a(\hat x)\geq a_0$. We divide this construction in 5 steps.

\medskip

\noindent\textbf{Step 1.} There exists $y\in E(\eta)$ such that
\begin{align*}
\tau(y)\cdot \frac{y}{|y|} > \frac{\eta R}{2\pi}.
\end{align*}

\medskip

Pick a tangency point $\bar x=\p(\bar s)\in\partial\Omega\cap\partial B_R$, and let $(s_1,s_2)\subset\R$ denote the largest interval containing $\bar s$ and such that $\p((s_1,s_2))\subset E(\eta)$. Since $E(\eta)\neq\partial\Omega$ we know that $s_2-s_1 <2\pi$. Moreover, by Lemma~\ref{l:continuation} if $\e_0\leq 1/4$ we must have
\begin{align*}
|\p(s_1)|=|\p(s_2)|=(1+\eta)R.
\end{align*}
Consider the function $\psi(s)=|\p(s)| - R=\dist(\p(s),\partial B_R)$ as in Lemma~\ref{l:geom1}. We have $\psi(s_1)=\psi(s_2)=\eta R$ and $\psi(\bar s)=0$, so there must exist $s_*\in (\bar s,s_2)$ such that $\psi'(s_*)\geq \eta R/(s_2-s_1) > \eta R/(2\pi)$. Setting $y=\p(s_*)$ and recalling the expression \eqref{e:der_psi} of $\psi'$, we have $\psi'(s_*)=\tau(y)\cdot y/|y|> \eta R/(2\pi)$, proving Step~1.

\medskip

\noindent\textbf{Step 2.} For all angles $|\theta | \leq \eta R/(8\pi^2 K)$, the ray $\lbrace t e^{i\theta}y\rbrace_{t>0}$ does not contain any tangency point $x\in \partial\Omega\cap\partial B_R$. 

\medskip 

Recall from \eqref{e:der_psi} that $|\psi''|\leq 2K$. As $\psi'(s_*)>\eta R/(2\pi)$ this implies $\psi'(s)>0$ for all $s$ such that $|s-s_*|\leq \eta R/(4\pi K)$, hence $\p(s)$ is not a tangency point.
Further, as $\p$ is 1-Lipschitz and $|\p(s_*)|\leq (1+\eta)R$ we have $|\p(s)|\leq (1+2\eta)R$ for all $s$ such that $|s-s_*|\leq \eta R$, which by Lemma~\ref{l:continuation} implies $\p((s_*-\eta R,s_*+\eta R))\subset E(2\eta )$, provided $\e_0\leq 1/8$. 
Since $K\geq 1/R\geq 1$ we deduce that whenever $|s-s_*| \leq \eta R/(4\pi K)$ we have $\p(s)\in E(2\eta )$ and the ray $\lbrace t \p(s)\rbrace_{t>0}$ does not contain any tangency point. 

Let $\theta\colon\R \to \R$ be the $C^1$ function such that $\theta(s_*)=0$ and
\begin{align*}
\frac{\p(s)}{|\p(s)|}=e^{i\theta(s)}\frac{y}{|y|}.
\end{align*}
It satisfies
\begin{align*}
\theta'(s)=\frac{1}{|\p(s)|}\frac{i\p(s)}{|\p(s)|}\cdot \dot\p(s).
\end{align*}
If $|s-s_*|\leq \eta R/(4\pi K)$ we have $i\p(s)\cdot \dot\p(s)\geq 0$  and 
\begin{align*}
\frac{i\p(s)}{|\p(s)|}\cdot \dot\p(s) & =\sqrt{1-\left(\frac{\p(s)}{|\p(s)|}\cdot \dot\p(s)\right)^2}
 \geq \sqrt{1-8K\eta R}.
\end{align*}
The last inequality follows from Lemma~\ref{l:geom1} and the fact that $\p(s)\in E(2\eta )$. Since $R\leq 1$, provided $\e_0\leq 1/16$ we deduce $\dot\p(s)\cdot i\p(s)/|\p(s)|\geq 1/2$, and therefore
\begin{align*}
\theta'(s)\geq \frac{1}{2|\p(s)|}\geq \frac{1}{2\pi},
\end{align*}
using that $|\p(s)|\leq \pi$ as a consequence of $\mathcal H^1(\partial\Omega)=2\pi$. We deduce that 
\begin{align*}
\left[-\frac{\eta R}{8\pi^2K},\frac{\eta R}{8\pi^2 K}\right]\subset
\theta\left(
\left[s_* -\frac{\eta R}{4\pi K},s_* + \frac{\eta R}{4\pi K}\right]
\right),
\end{align*}
Therefore, if $|\theta|\leq \eta R/(8\pi^2K)$ then there exists $s$ such that $|s-s_*|\leq \eta R/(4\pi K)$ and the ray $\lbrace te^{i\theta}y\rbrace_{t>0}$ coincides with the ray $\lbrace t\p(s)\rbrace_{t>0}$, which does not contain any tangency point. This proves Step~2.

\medskip

\noindent\textbf{Step 3.} There exists a tangency point $\tilde x\in \partial\Omega\cap \partial B_R$ such that
\begin{align*}
\frac{\tilde x}{|\tilde x|}=e^{i\tilde\theta_0} \frac{y}{|y|} \qquad\text{ with }
\frac{\eta R}{8\pi^2 K} \leq  \tilde \theta_0 \leq  \pi-\frac{\eta R}{8\pi^2 K}.
\end{align*}

\medskip

By maximality of the inscribed disk $B_R\subset \Omega$, the tangency points cannot be all contained in an arc of angle less than $\pi$, so there must be at least one tangency point $\tilde x\in \partial\Omega\cap\partial B_R$ such that $\tilde x/|\tilde x|=e^{i\tilde\theta_0}y/|y|$ for some $\tilde\theta_0\in [-\eta R/(8\pi^2 K),\pi-\eta R/8\pi^2 K]$. Thanks to Step~2, it must satisfy also $\tilde\theta_0 \geq \eta R/(8\pi^2 K)$, proving Step~3.

\medskip

\noindent\textbf{Step 4.} There are constants $c_1,c_2,c_3>0$ depending only on $K$,  with the following property. For any tangency point $\tilde x\in\partial\Omega\cap\partial B_R$, any $t\in (0,1/2)$ and $z=t\tilde x$, and any $\delta\in (0,1/(8K))$, there exist
 $x_1,x_2\in E(2\delta^2)$ such that
\begin{align*}
&\frac{x_1-z}{|x_1-z|}=e^{i\theta_1}\frac{\tilde x}{|\tilde x|}\quad\text{for some }\theta_1 \in (-c_2\delta,-c_1\delta t),\\
&\frac{x_2-z}{|x_2-z|}=e^{i\theta_2}\frac{\tilde x}{|\tilde x|}\quad\text{for some }\theta_2 \in (c_1\delta t,c_2\delta  ),\\
& \tau(x_1)\cdot \frac{x_1-z}{|x_1-z|}\leq -c_3 \delta t,\qquad
 \tau(x_2)\cdot \frac{x_2-z}{|x_2-z|}\geq c_3 \delta t.
\end{align*}
These will be used in Step~5 as illustrated by Figure~\ref{fig}.

\medskip

\begin{figure}[h]
\centering
\def\svgwidth{0.55\columnwidth}
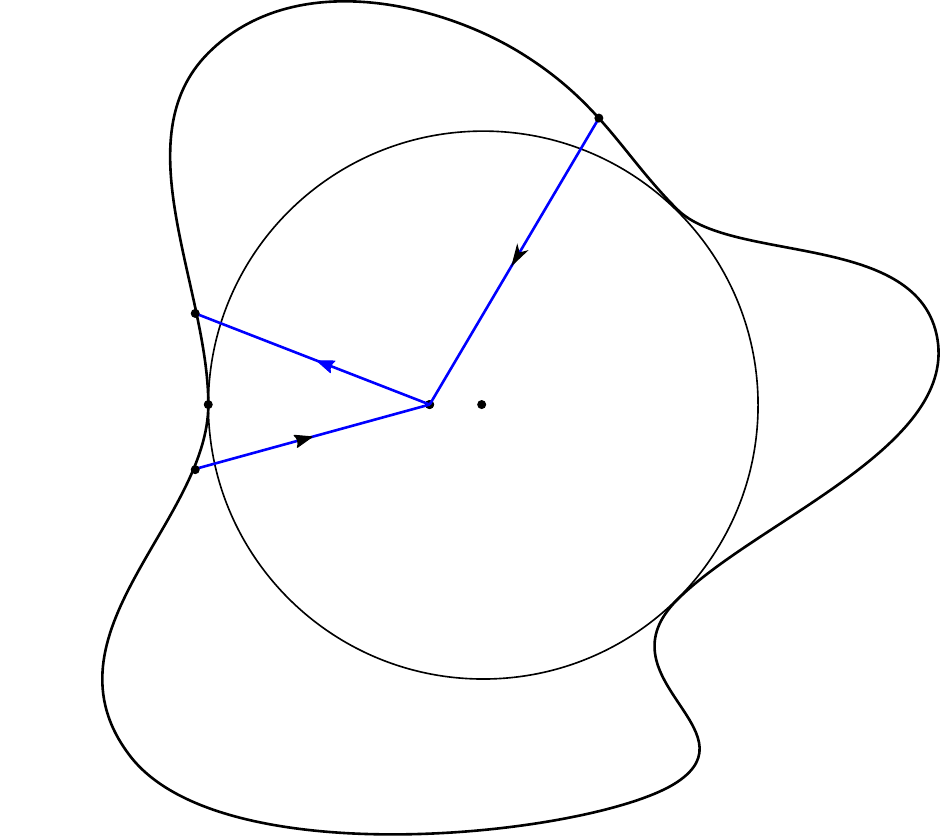
\caption{\small The blue arrows denote the directions $e^{i\alpha_1},e^{i\alpha_2},e^{i\alpha_3}$ in Step~5. The idea is that the two directions $e^{i\alpha_1}$ and $e^{i\alpha_2}$ are almost opposite and $e^{i\alpha_3}$ is not close to $e^{i\alpha_1}$ and $e^{i\alpha_2}$  and belongs to the longest of the two intervals with endpoints at $e^{i\alpha_1}$ and $e^{i\alpha_2}$.}
\label{fig}
\end{figure}

Write $\tilde x=\p(\tilde s)$ for some $\tilde s\in\R$. The map $\p$ is $1$-Lipschitz and $|\p(\tilde s)|=R$, so by Lemma~\ref{l:continuation} we have $\p(s)\in E_*$ for $|s-\tilde s|\leq \delta \leq 1/(4K)$.  

Consider the $C^1$ function $\hat\theta\colon\R\to\R$ such that $\hat\theta(\tilde s)=0$ and
\begin{align*}
\frac{\p(s)}{|\p(s)|}=e^{i\hat\theta(s)}\frac{\tilde x}{|\tilde x|}.
\end{align*}
As in Step~2 we have
\begin{align*}
\hat\theta'(s)=\frac{1}{|\p(s)|}\frac{i\p(s)}{|\p(s)|}\cdot \dot\p(s).
\end{align*}
Since $|\p(\tilde s)|=R$ and $\p$ is 1-Lipschitz, Lemma~\ref{l:geom1} implies
\begin{align*}
\left( \frac{\p(s)}{|\p(s)|}\cdot \dot\p(s) \right)^2
&\leq  4 K |s-\tilde s| \leq \frac 12\qquad\text{for }|s-\tilde s|\leq\delta\leq \frac{1}{8K}.
\end{align*}
We deduce as in Step~2 that $\hat\theta'(s)\geq 1/(2\pi)$ for $|s-\tilde s|\leq \delta$. Using also $|\p|\geq R$ we therefore have
\begin{align*}
\frac{1}{2\pi}\leq \hat\theta'(s)\leq \frac{1}{R}\qquad\text{for }|s-\tilde s|\leq\delta.
\end{align*}
On the other hand, setting
\begin{align*}
\varphi(s)=|\p(s)-z|^2= \left| \frac{|\p(s)|}{R} e^{i\hat\theta(s)}\tilde x-t\tilde x\right|^2,
\end{align*}
we have
\begin{align*}
\varphi(s)-\varphi(\tilde s)&
=\left| \frac{|\p(s)|}{R} \tilde x-t\tilde x -\frac{|\p(s)|}{R} (1-e^{i\hat\theta(s)})\tilde x\right|^2 -(R-Rt)^2\\
&
=(|\p(s)|-Rt)^2-(R-Rt)^2 +|\p(s)|^2 |1-e^{i\hat\theta(s)}|^2 \\
&\quad - 2|\p(s)|(|\p(s)|-Rt)(1-\cos\hat\theta(s))
\\
&
=(|\p(s)|-Rt)^2-(R-Rt)^2 +2Rt|\p(s)|(1-\cos\hat\theta(s)) \\
&\geq 2t R^2 (1-\cos\hat\theta(s)),
\end{align*}
where the last inequality follows from $|\p|\geq R$ and $t\leq 1$. 
As $\hat\theta(\tilde s)=0$ and $|\hat\theta'|\leq 1/R$, for $|s-\tilde s|\leq \delta$ we have 
$|\hat\theta(s)|\leq \delta/R\leq\delta K\leq 1/8$, and this implies $1-\cos\hat\theta(s)\geq \hat\theta(s)^2/4$, so
\begin{align*}
\varphi(s)-\varphi(\tilde s)
&\geq \frac t2 R^2 \hat\theta(s)^2\qquad\text{for }|s-\tilde s|\leq \delta.
\end{align*}
Using moreover that $\hat\theta'\geq 1/(2\pi)$ we deduce
\begin{align*}
\varphi(s)-\varphi(\tilde s) \geq \frac{tR^2}{8\pi^2}(s-\tilde s)^2\qquad\text{for }|s-\tilde s|\leq \delta.
\end{align*}
Hence there  exist $s_1\in (\tilde s-\delta,\tilde s)$, $s_2\in (\tilde s,\tilde s +\delta)$ such that
\begin{align*}
\varphi'(s_1)&=-\frac{1}{\delta}(\varphi(\tilde s-\delta)-\varphi(\tilde s)) \leq -\frac{R^2}{8\pi^2}\delta t,\\
\varphi'(s_2)&=\frac{1}{\delta}(\varphi(\tilde s+\delta)-\varphi(\tilde s)) \geq \frac{R^2}{8\pi^2}\delta t.
\end{align*}
Since $\varphi'(s)=2\tau(\p(s))\cdot (\p(s)-z)$ and
 $ |\p(s)-z|\leq \pi$, 
setting $x_1=\p(s_1)$, $x_2=\p(s_2)$
we obtain
\begin{align*}
\tau(x_1)\cdot\frac{x_1-z}{|x_1-z|}&=\frac{\varphi'(s_1)}{2|\p(s_1)-z|} \leq -c_3 \delta t\\
\tau(x_2)\cdot\frac{x_2-z}{|x_2-z|}&=\frac{\varphi'(s_2)}{2|\p(s_2)-z|} \geq c_3 \delta t, \qquad c_3 =\frac{R^2}{16\pi^3 }.
\end{align*}
This proves the last assertion of Step~4. Moreover, since $\p'(\tilde s)=0$, $\delta\in (0,1/(8K)$ and $|\ddot\p|\leq K$, the points $x_1,x_2$ lie outside of the disk of radius $1/K$ tangent to $D_R(x_0)$ at $\tilde x$, and since $R\geq 1/K$ we infer that they are at distance at most $2K\delta^2$ from $D_R(x_0)$, and thanks to Lemma~\ref{l:continuation} they belong to $E(2K\delta^2/R)\subset E(2\delta^2)$.

It remains to show that 
\begin{align*}
\frac{x_j-z}{|x_j-z|}=e^{i\theta_j}\frac{\tilde x}{|\tilde x|} \quad\text{for }j=1,2, 
\quad -c_2\delta<\theta_1<-c_1\delta t,\; c_1 \delta t <\theta_2 < c_2\delta.
\end{align*}
By definition of $\hat\theta$ we know that
\begin{align*}
\frac{x_j}{|x_j|}=e^{i\hat\theta(s_j)}\frac{\tilde x}{|\tilde x|},
\end{align*}
so we relate $\hat\theta(s_j)$ to $\theta_j$ and estimate $\hat\theta(s_j)$. To do the first, consider, for any fixed $s\in\R$, the  $C^1$ function $\alpha_s\colon [0,1/2]\to\R$ such that $\alpha_s(0)=\hat\theta(s)$ and
\begin{align*}
\frac{\p(s)-t\tilde x}{|\p(s)-t\tilde x|}=e^{i\alpha_s(t)}\frac{\tilde x}{|\tilde x|}.
\end{align*}
That way, we have  $\hat\theta(s_j)=\alpha_{s_j}(0)$ and can choose $\theta_j=\alpha_{s_j}(t)$. Moreover we have
\begin{align*}
\alpha_s'(t)=-\frac{i(\p(s)-t \tilde x)}{|\p(s)-t\tilde x|^2}\cdot \tilde x = \frac{\p(s)\cdot (i\tilde x)}{|\p(s)-t\tilde x|^2}=\frac{R|\p(s)|}{|\p(s)-t\tilde x|^2}\sin \hat\theta(s).
\end{align*}
Note that since $\hat\theta(\tilde s)=0$ and $1/(2\pi)\leq\hat\theta'\leq 1/R$   we have
\begin{align*}
0<\mathrm{sign}(s-\tilde s)\hat\theta(s)\leq \frac{\delta}{R}\leq \delta K\leq \frac 18
\quad\text{for }|s-\tilde s|\leq \delta.
\end{align*}
In particular, using $|\p|\leq \pi$, $|\p-t\tilde x|\geq R/2$ and $|\sin\hat\theta|\leq|\hat\theta|$, we deduce
\begin{align*}
0<\mathrm{sign}(s-\tilde s)\alpha_s'(t)\leq \frac{4\pi}{R^2}\delta
\leq \frac{\pi}{2}K
\qquad\text{for }|s-\tilde s|\leq \delta,
\end{align*}
hence, recalling $\theta_j-\hat\theta(s_j)=\int_0^t\alpha_{s_j}'$, we infer
\begin{align*}
-(1+\pi/2)K\delta\leq \theta_2\leq\hat\theta(s_1)<0<\hat\theta(s_2)\leq\theta_2 \leq (1+\pi/2)K\delta.
\end{align*}
The proof of Step~4 will be complete once we show that $|\hat\theta(s_j)|\geq c_1\delta t$ for $j=1,2$.
Because $\varphi'(\tilde s)=0$ and $|\varphi''|\leq 2K+2 \leq 4K$, we must have
\begin{align*}
|s_j-\tilde s|\geq\frac{|\varphi'(s_j)|}{4K}\geq 2\pi c_1 t,\quad c_1=\frac{R^2}{32\pi^2K}.
\end{align*}
Combining this with $\hat\theta'\geq 1/(2\pi)$ on $[s_1,s_2]$ we deduce that $|\hat\theta(s_j)|\geq c_1\delta t$ and conclude the proof of Step~4.

\medskip

\textbf{Step 5.} We choose $t\in (0,1/2)$ and $\delta\in (0,1/(8K))$ such that $2\delta^2\leq\eta$ and, for the tangency point $\tilde x\in\partial\Omega\cap\partial B_R$ obtained in Step~3 and $z=t\tilde x$, letting $x_1,x_2\in\partial\Omega$ provided by Step~4, and $x_3=y$ provided by Step~1, the three concurring lines from $x_1,x_2,x_3$ through $z$ can be used to show $a(x_1,x_2,x_3)\geq a_0$. See Figure~\ref{fig}.

\medskip

Let $\alpha_1,\alpha_2,\alpha_3\in\R/2\pi\Z$ such that
\begin{align*}
\frac{x_1-z}{|x_1-z|}=e^{i\alpha_1},\quad 
\frac{x_2-z}{|x_2-z|}=-e^{i\alpha_2},\quad
\frac{y-z}{|y-z|}=-e^{i\alpha_3}.
\end{align*}
By definition, the three lines $x_j+e^{i\alpha_j}\R$ are concurrent in $z\in B_{R/2}$.
Moreover by Step~4 we have
\begin{align*}
 \tau(x_j)\cdot e^{i\alpha_j} \leq -c_3\delta t\qquad\text{for }j=1,2.
\end{align*}
The function $t\mapsto \tau(y)\cdot (y-t\tilde x)/|y-t\tilde x|$ is $2$-Lipschitz on $[0,1/2]$ since $|\tilde x|=R$ and $|y-t\tilde x|\geq R/2$ for $t\in [0,1/2]$. Since $\tau(y)\cdot y/|y| > \eta R/(2\pi)$ by Step~1, choosing $t\in (0,\eta R/(8\pi))$ ensures
\begin{align*}
\tau(y)\cdot e^{i\alpha_3} =-\tau(y)\cdot \frac{y-t\tilde x}{|y-t\tilde x|} < -\frac{\eta R}{4\pi}.
\end{align*}
Recall from Step~3 that we have
\begin{align*}
\frac{\tilde x}{|\tilde x|}=e^{i\tilde\theta_0} \frac{y}{|y|} \qquad\text{ with }
\frac{\eta R}{8\pi^2 K} \leq  \tilde \theta_0 \leq  \pi-\frac{\eta R}{8\pi^2 K}.
\end{align*}
The $C^1$ function $\tilde \theta\colon [0,1/2]\to\R$ such that $\tilde\theta(0)=\tilde\theta_0$ and 
\begin{align*}
\frac{\tilde x}{|\tilde x|}=e^{i\tilde\theta(t)} \frac{y-t\tilde x}{|y-t\tilde x|}, 
\end{align*}
satisfies, arguing as in previous steps, $|\tilde\theta'|\leq 2$, so choosing 
\begin{align*}
t=\frac{\eta R}{32\pi^2K} \in (0,\eta R/(8\pi)),
\end{align*}
ensures
\begin{align*}
\frac{\tilde x}{|\tilde x|}=e^{i\tilde\theta_t} \frac{y-z}{|y-z|}, 
\qquad\text{ with }
\frac{\eta R}{16\pi^2 K} \leq  \tilde \theta_t \leq  \pi -\frac{\eta R}{16\pi^2 K}
\end{align*}
From this identity, the definitions of $\theta_1,\theta_2$ in Step~4, and the definitions of $\alpha_1,\alpha_2,\alpha_3$, we obtain
\begin{align*}
\frac{\tilde x}{|\tilde x|}=e^{i(\alpha_3 +\tilde\theta_t -\pi)}=e^{i(\alpha_2-\theta_2-\pi)}=e^{i(\alpha_1-\theta_1)}.
\end{align*}
So we have, recalling from Step~4 the inequalities satisfied  by $\theta_1,\theta_2$,
\begin{align*}
e^{i\alpha_2}&=e^{i(\pi+\theta_2-\theta_1)}e^{i\alpha_1},\qquad &
\pi+\theta_2-\theta_1 \in [\pi +2c_1\delta t,\pi+2c_2\delta],\\
e^{i\alpha_3}& =e^{i(\pi-\tilde\theta_t-\theta_1)}e^{i\alpha_1},\qquad
&\pi-\tilde\theta_t-\theta_1 \in [\eta R/(16\pi^2K) ,\pi-\eta R/(16\pi^2K) + c_2\delta].
\end{align*}
Choosing
\begin{align*}
\delta =\min\left( \frac{\eta R}{32c_2\pi^2 K},\frac{1}{2c_2},\frac{1}{8K},\sqrt{\frac\eta 2}\right),
\end{align*}
this implies that the shortest interval in $\R/2\pi\Z$ containing $\alpha_1,\alpha_2,\alpha_3$  is of length
\begin{align*}
l(\alpha_1,\alpha_2,\alpha_3) \geq \pi +\min\left(2c_1 \delta t, \frac{\eta R}{32 \pi^2 K}\right).
\end{align*}
Letting
\begin{align*}
a_0=\min\left(
2c_1 \delta t, \frac{\eta R}{32 \pi^2 K}, c_3\delta t \right),
\end{align*}
and gathering the above,
we conclude that $a(x_1,x_2,x_3)\geq a_0$.
\end{proof}

The proof of Proposition~\ref{p:soft} will be a combination of Lemma~\ref{l:pointwise_estimate} and of the fact, proven in Lemma~\ref{l:a_Lip}, that $a$ is Lipschitz.


\begin{proof}[Proof of Proposition~\ref{p:soft}]
We assume that $E(\eta)\ne \partial \Omega$, and prove that $\int_{E(\eta)^3} a d (\mathcal H^1)^{\otimes 3} \ge c$ for some $c=c(\eta,K)>0$.
As $E(\eta/2)\subset E(\eta)\varsubsetneq \partial\Omega$, applying Lemma~\ref{l:pointwise_estimate} we find
 $\hat x=(x_1,x_2,x_3) \in E(\eta/2)^3$ such that $a(\hat x)\geq a_0$, where $a_0=a_0(\eta,K)>0$. 
 Let $x_k=\p(\bar s_k)$ for $k=1,2,3$. Thanks to Lemma~\ref{l:continuation} and the Lipschitz quality of $a$ (Lemma~\ref{l:a_Lip}) we may choose $\delta=\delta(\eta,K)>0$ such that
 \begin{align*}
a\geq \frac{a_0}{2}\qquad\text{on } \prod_{k=1}^3 C([\bar s_k -\delta,\bar s_k+\delta])\subset E(\eta)^3.
 \end{align*}
 This implies
 \begin{align*}
 \int_{E(\eta)^3} a \,d (\mathcal H^1)^{\otimes 3}
\geq \delta^3\frac{a_0}{2},
 \end{align*}
 concluding the proof of Proposition~\ref{p:soft}.
\end{proof}

\end{appendices}

\section*{Acknowledgements} 
The authors wish to thank Andrew Lorent and Guanying Peng for many interesting discussions, which led among other things to the proof of Lemma~\ref{l:divcurl}. 
E.M. acknowledges the support received from the SNF Grant 182565 and the European Union's Horizon 2020 research and innovation program under the Marie Sk\l odowska-Curie grant No. 101025032.

\bibliographystyle{acm}
\bibliography{aviles_giga}

\begin{thebibliography}{10}

\bibitem{ARS}
{\sc Alouges, F., Rivi\`ere, T., and Serfaty, S.}
\newblock N\'eel and cross-tie wall energies for planar micromagnetic
  configurations.
\newblock {\em ESAIM Control Optim. Calc. Var. 8\/} (2002), 31--68.
\newblock A tribute to J. L. Lions.

\bibitem{ADM}
{\sc Ambrosio, L., De~Lellis, C., and Mantegazza, C.}
\newblock Line energies for gradient vector fields in the plane.
\newblock {\em Calc. Var. Partial Differential Equations 9}, 4 (1999),
  327--255.

\bibitem{ambrosio}
{\sc Ambrosio, L., Fusco, N., and Pallara, D.}
\newblock {\em Functions of bounded variation and free discontinuity problems}.
\newblock Oxford Mathematical Monographs. The Clarendon Press, Oxford
  University Press, New York, 2000.

\bibitem{AG86}
{\sc Aviles, P., and Giga, Y.}
\newblock A mathematical problem related to the physical theory of liquid
  crystal configurations.
\newblock In {\em Miniconference on geometry and partial differential
  equations, 2 ({C}anberra, 1986)}, vol.~12 of {\em Proc. Centre Math. Anal.
  Austral. Nat. Univ.} Austral. Nat. Univ., Canberra, 1987, pp.~1--16.

\bibitem{AG96}
{\sc Aviles, P., and Giga, Y.}
\newblock The distance function and defect energy.
\newblock {\em Proc. Roy. Soc. Edinburgh Sect. A 126}, 5 (1996), 923--938.

\bibitem{BP17}
{\sc Bochard, P., and Pegon, P.}
\newblock Kinetic selection principle for curl-free vector fields of unit norm.
\newblock {\em Comm. Partial Differential Equations 42}, 9 (2017), 1375--1402.

\bibitem{contidel}
{\sc Conti, S., and De~Lellis, C.}
\newblock {Sharp upper bounds for a variational problem with singular
  perturbation.}
\newblock {\em {Math. Ann.} 338}, 1 (2007), 119--146.

\bibitem{CHLM22}
{\sc Contreras~Hip, A.~A., Lamy, X., and Marconi, E.}
\newblock Generalized characteristics for finite entropy solutions of
  {B}urgers' equation.
\newblock {\em Nonlinear Anal. 219\/} (2022), Paper No. 112804.

\bibitem{DeI}
{\sc De~Lellis, C., and Ignat, R.}
\newblock A regularizing property of the {$2D$}-eikonal equation.
\newblock {\em Comm. Partial Differential Equations 40}, 8 (2015), 1543--1557.

\bibitem{ODL03}
{\sc De~Lellis, C., and Otto, F.}
\newblock Structure of entropy solutions to the eikonal equation.
\newblock {\em J. Eur. Math. Soc. (JEMS) 5}, 2 (2003), 107--145.

\bibitem{dkmo01}
{\sc DeSimone, A., M{\"u}ller, S., Kohn, R.~V., and Otto, F.}
\newblock A compactness result in the gradient theory of phase transitions.
\newblock {\em Proc. Roy. Soc. Edinburgh Sect. A 131}, 4 (2001), 833--844.

\bibitem{GL}
{\sc Ghiraldin, F., and Lamy, X.}
\newblock Optimal {Besov} differentiability for entropy solutions of the
  eikonal equation.
\newblock {\em Commun. Pure Appl. Math. 73}, 2 (2020), 317--349.

\bibitem{GMPS21}
{\sc Goldman, M., Merlet, B., Pegon, M., and Serfaty, S.}
\newblock {Compactness and structure of zero-states for unoriented Aviles-Giga
  functionals}.
\newblock arXiv:2112.04959.

\bibitem{golse10}
{\sc Golse, F.}
\newblock Nonlinear regularizing effect for hyperbolic partial differential
  equations.
\newblock In {\em X{VI}th {I}nternational {C}ongress on {M}athematical
  {P}hysics}. World Sci. Publ., Hackensack, NJ, 2010, pp.~433--437.

\bibitem{howard-treibergs}
{\sc Howard, R., and Treibergs, A.}
\newblock A reverse isoperimetric inequality, stability and extremal theorems
  for plane curves with bounded curvature.
\newblock {\em Rocky Mountain J. Math. 25}, 2 (1995), 635--684.

\bibitem{ignat05}
{\sc Ignat, R.}
\newblock {Optimal lifting for {\({\rm BV}(S^1,S^1)\)}}.
\newblock {\em Calc. Var. Partial Differential Equations 23}, 1 (2005), 83--96.

\bibitem{ignatmerlet12}
{\sc Ignat, R., and Merlet, B.}
\newblock Entropy method for line-energies.
\newblock {\em Calc. Var. Partial Differential Equations 44}, 3-4 (2012),
  375--418.

\bibitem{IN21}
{\sc Ignat, R., and Nguyen, L.}
\newblock {Local minimality of $\mathbb R^n$-valued and $\mathbb S^n$-valued
  Ginzburg-Landau vortex solutions in the unit ball $B^N$}.
\newblock arXiv:2111.07669.

\bibitem{JOP}
{\sc Jabin, P.-E., Otto, F., and Perthame, B.}
\newblock Line-energy {G}inzburg-{L}andau models: zero-energy states.
\newblock {\em Ann. Sc. Norm. Super. Pisa Cl. Sci. (5) 1}, 1 (2002), 187--202.

\bibitem{jabinperthame01}
{\sc Jabin, P.-E., and Perthame, B.}
\newblock Compactness in {G}inzburg-{L}andau energy by kinetic averaging.
\newblock {\em Comm. Pure Appl. Math. 54}, 9 (2001), 1096--1109.

\bibitem{JK00}
{\sc Jin, W., and Kohn, R.~V.}
\newblock Singular perturbation and the energy of folds.
\newblock {\em J. Nonlinear Sci. 10}, 3 (2000), 355--390.

\bibitem{llp22}
{\sc Lamy, X., Lorent, A., and Peng, G.}
\newblock {On a generalized Aviles-Giga functional: compactness, zero-energy
  states, regularity estimates and energy bounds}.
\newblock arXiv:2203.05418.

\bibitem{llp20}
{\sc Lamy, X., Lorent, A., and Peng, G.}
\newblock {Rigidity of a non-elliptic differential inclusion related to the
  Aviles-Giga conjecture}.
\newblock {\em Arch. Ration. Mech. Anal. 238}, 1 (2020), 383--413.

\bibitem{lionsperthametadmor94}
{\sc Lions, P.-L., Perthame, B., and Tadmor, E.}
\newblock A kinetic formulation of multidimensional scalar conservation laws
  and related equations.
\newblock {\em J. Amer. Math. Soc. 7}, 1 (1994), 169--191.

\bibitem{lor12}
{\sc Lorent, A.}
\newblock A simple proof of the characterization of functions of low {Aviles}
  {Giga} energy on a ball via regularity.
\newblock {\em ESAIM, Control Optim. Calc. Var. 18}, 2 (2012), 383--400.

\bibitem{lor14}
{\sc Lorent, A.}
\newblock A quantitative characterisation of functions with low aviles giga
  energy on convex domains.
\newblock {\em Ann. Sc. Norm. Super. Pisa Cl. Sci. 13}, 5 (2014).

\bibitem{LPfacto}
{\sc Lorent, A., and Peng, G.}
\newblock {Factorization for entropy production of the Eikonal equation and
  regularity}.
\newblock arXiv:2104.01467.

\bibitem{LP18}
{\sc Lorent, A., and Peng, G.}
\newblock Regularity of the eikonal equation with two vanishing entropies.
\newblock {\em Ann. Inst. H. Poincar\'{e} Anal. Non Lin\'{e}aire 35}, 2 (2018),
  481--516.

\bibitem{marconi21ellipse}
{\sc Marconi, E.}
\newblock Characterization of minimizers of {A}viles-{G}iga functionals in
  special domains.
\newblock {\em Arch. Ration. Mech. Anal. 242}, 2 (2021), 1289--1316.

\bibitem{marconi21micromag}
{\sc Marconi, E.}
\newblock Rectifiability of entropy defect measures in a micromagnetics model.
\newblock {\em Advances in Calculus of Variations\/} (2021),
  000010151520210012.

\bibitem{marconi22structure}
{\sc Marconi, E.}
\newblock On the structure of weak solutions to scalar conservation laws with
  finite entropy production.
\newblock {\em Calc. Var. Partial Differ. Equ. 61}, 1 (2022), 30.
\newblock Id/No 32.

\bibitem{gioort}
{\sc Ortiz, M., and Gioia, G.}
\newblock The morphology and folding patterns of buckling-driven thin-film
  blisters.
\newblock {\em J. Mech. Phys. Solids 42}, 3 (1994), 531--559.

\bibitem{pestov-ionin}
{\sc Pestov, G., and Ionin, V.}
\newblock On the largest possible circle imbedded in a given closed curve.
\newblock {\em Dokl. Akad. Nauk SSSR 127\/} (1959), 1170--1172.

\bibitem{poliakovsky07}
{\sc Poliakovsky, A.}
\newblock Upper bounds for singular perturbation problems involving gradient
  fields.
\newblock {\em J. Eur. Math. Soc. (JEMS) 9}, 1 (2007), 1--43.

\bibitem{poliakovsky13}
{\sc {Poliakovsky}, A.}
\newblock {On the \(\Gamma \)-limit of singular perturbation problems with
  optimal profiles which are not one-dimensional. I: The upper bound.}
\newblock {\em {Differ. Integral Equ.} 26}, 9-10 (2013), 1179--1234.

\bibitem{RS01}
{\sc Rivi\`ere, T., and Serfaty, S.}
\newblock Limiting domain wall energy for a problem related to micromagnetics.
\newblock {\em Comm. Pure Appl. Math. 54}, 3 (2001), 294--338.

\bibitem{RS03}
{\sc Rivi\`ere, T., and Serfaty, S.}
\newblock Compactness, kinetic formulation, and entropies for a problem related
  to micromagnetics.
\newblock {\em Comm. Partial Differential Equations 28}, 1-2 (2003), 249--269.

\bibitem{vasseur01}
{\sc Vasseur, A.}
\newblock Strong traces for solutions of multidimensional scalar conservation
  laws.
\newblock {\em Arch. Ration. Mech. Anal. 160}, 3 (2001), 181--193.

\end{thebibliography}

\end{document}